\newtheorem{thm}{Theorem}[section]
 \newtheorem{lem}[thm]{Lemma}
 \newtheorem{prop}[thm]{Proposition}
 \newtheorem{theoremname}{Theorem S}
 \theoremstyle{definition}
 \newtheorem{defn}[thm]{Definition}
 \theoremstyle{remark}
 \newtheorem{rem}[thm]{Remark}
 \newtheorem{ex}[thm]{Example}
 \numberwithin{equation}{section}
\newcommand{\V}{\mathbf{v}}
\newcommand\sbmattrix[4]{\textnormal{\scriptsize$\left(\begin{array}{cc}#1&#2\\#3&#4\end{array}\right)$\normalsize}}
\title[Diophantine Approximation in local function fields]
{Diophantine Approximation in local function fields via Bruhat-Tit trees}
\begin{document}

%
%
%
%
%
%
%
%
%

\maketitle

\author{\textbf{Luis Arenas-Carmona}
\footnote{Universidad de Chile, Facultad de Ciencias, Casilla 652, Santiago, Chile. Email: \email{learenas@u.uchile.cl}.}}
\textbf{and}
\author{\textbf{Claudio Bravo}
\footnote{Centre de Mathématiques Laurent Schwartz, École Polytechnique, Institut Polytechnique de Paris, 91128 Palaiseau Cedex, France. Email: \email{claudio.bravo-castillo@polytechnique.edu}.}
}

\vspace{0.5cm}

\begin{abstract}
We use the theory of arithmetic quotients of the Bruhat-Tits tree
developed by Serre and others to obtain Dirichlet-style theorems for
Diophantine approximation on global function fields. This approach allows 
us to find sharp values for the constants involved and, occasionally,
explicit examples of badly approximable quadratic irrationals. 
Additionally, we can use this method to easily compute the measure 
of the set of elements that can be written as the limit of a sequence 
of ``better than expected'' approximants. All these results can be
easily obtained via continued fractions when they are available,
so that quotient graphs can be seen as a partial replacement of
them when this fails to be the case.
\\

\textbf{MSC Codes:} 11J61, 14H05 (primary)  11J70, 11K60, 20E08 (secondary).

\textbf{Keywords:} Diophantine approximation, 
algebraic function fields, local fields, Bruhat-Tits trees, quotient graphs.
\end{abstract}


\section{Introduction}\label{section intro}

Let $K$ be the global function field of a smooth irreducible curve $X$ over a finite field $\mathbb{F}=\mathbb{F}_q$.
Let $K_\infty$ the completion of $K$ corresponding to a closed point $P_\infty$ in $X$, which we refer to, in all that follows, as the point at infinity. Let 
$\mathcal{O}_X$ be the structure sheaf of $X$, 
$U_{\infty}=X\smallsetminus\{P_\infty\}$ the finite part of the curve,
and $A=\mathcal{O}_X(U_{\infty})$ the ring of functions that are regular 
at all closed points $P\in U_\infty$, the finite points.
In the sequel, we denote by $\nu_{\infty}: K_{\infty} \to \mathbb{Z} \cup \lbrace \infty \rbrace$ the canonical discrete valuation.
The absolute value $\lvert \cdot \rvert_{\infty} : K_{\infty} \to \mathbb{R}_{\geq 0}$ induced by $\nu_{\infty}$ is normalized by $\lvert \alpha \rvert_{\infty} = q^{-\nu_{\infty}(\alpha)}$, for all $\alpha \in K_{\infty}$.
We adopt several conventions regarding $0\in K$ than can be summarized as follows:
$|0|_\infty=q^{-\nu_\infty(0)}=q^{-\infty}=0$.
The ring of integers of $K_{\infty}$ is $\mathcal{O}_{\infty}:=\lbrace \alpha \in K_{\infty} | \, \, \lvert \alpha \rvert_{\infty} \leq 1 \rbrace$.
We fix, once and for all, a uniformizing parameter $\pi \in \mathcal{O}_{\infty}$.

Let $\alpha\in K_\infty$ be any element.
A rational function $\frac fg$ with $f$ and $g$ in $A$ is 
called an $M$-approximation of $\alpha$ when the following condition holds:
\begin{itemize}
\item The ideals $(f)=fA$ and $(g)=gA$ are co-maximal in $A$.
\item $\left|\alpha-\frac fg\right|_\infty<\frac{|\pi|_{\infty}^M}{|g|_\infty^2}$.
\end{itemize}
We say that $\frac fg$ is a good approximation of $\alpha$ if it is an $N$-approximation, where  $N=\left\lceil\frac{2g(X)-1}{\deg(P_\infty)}\right\rceil$ is the least integer not smaller than $\frac{2g(X)-1}{\deg(P_\infty)} \in \mathbb{R}$.
Note that $N=0$ when $X$ is the projective line, so this generalizes the usual notion.
The question on why this is the natural
generalization is answered in detail in \S~\ref{section general approximation}.
For now, we just mention that we have
 a ``nice'' characterization of good approximations in terms of
the quotients graphs studied in \cite[Ch. II]{SerreTrees}.

An element $\alpha\in K_\infty$ is said to be $M$-approximable if there is a sequence of $M$-
approximations of $\alpha$ converging to $\alpha$. When we can take $M=N$, we say $\alpha$ is
well approximable. We denote by $\Psi$ the set of well approximable elements in $K_\infty$,
while $\Psi_M$ denotes the set of $M$-approximable elements, for any integer $M$.

It is proved in \cite[Theorem 5.3]{Baier2} the existence of certain $M'>0$ such that $\Phi_{M'}=K_{\infty}$.
Important evidence for this was also contributed by Ganguly and Ghosh in \cite[Theorem 1.1]{Ganguly} and Baier and Molla in \cite[Theorem 2]{Baier}.
When $A=\mathbb{F}[x]$ is a polynomial ring, good 
approximations can explicitly be described via continued fractions.
To be precise, every element $\alpha\in K_\infty$ can be writen as a continued
fraction of the form
\begin{equation}\label{eqcf1}
\alpha=f_0+\frac1{f_1+\frac1{f_2+\frac1{f_3+\dots}}},
\end{equation}
where every coefficient $f_i$ is a polynomial, and $\deg(f_i)>0$ for $i>0$.
Good approximations of the element $\alpha$ can 
be explicitly constructed by
truncation of the expression in \eqref{eqcf1}.
Example~\ref{E2} proves that the preceding fact does not hold for more general rings $A$.
However, the method of quotient graphs, 
which we develop in this
work, plays a similar role for a more general 
ring $A$. 
To make this statement precise, recall that the 
arithmetic group $\Gamma=\mathrm{GL}_2(A)$ acts on the Bruhat-Tits tree (building) $\mathfrak{t}$ of $\mathrm{SL}_2(K_{\infty})$.
This action defines a quotient graph $\Gamma \backslash \mathfrak{t}$, which we call the $\mathbf{S}$-graph of $\Gamma$. This graph is obtained by attaching
a finite number of rays, called cusps, to a finite graph called the center.
See \S~\ref{subsection quotients} for details.
The connection between quotient graphs and Diophantine approximation 
is to be expected,
since F. Paulin noted that the degrees of the coefficients of the continued fraction \eqref{eqcf1} can be described in terms of a 
``walk'' in the quotient graph for the case $A=\mathbb{F}[x]$, which is a ray. See \cite{Paulin} for details.

Our method provide simple proofs of several results that one naturally
obtains by using continued fraction,
whenever they are available.
The first result of this type
that we present here
 shows that, even when $M \geq M'$,  the set of $M$-approximable elements in $\mathcal{O}_{\infty}$ is large in a measure theoretical sens.

\begin{thm}\label{T1}
The set $\Psi_M\cap\mathcal{O}_\infty$ has full Haar measure for every integer $M$. Furthermore, there
exists a set $\Sigma$ of full Haar measure such that every element $\alpha\in\Sigma$ is the limit of
a sequence $\left\{\frac{f_n}{g_n}\right\}_n$ such that
$$\lim_{n\rightarrow\infty}\left|\alpha-\frac {f_n}{g_n}\right|_\infty|g_n|_\infty^2=0.$$
\end{thm}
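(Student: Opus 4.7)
The plan is to derive the second statement from the first via a diagonal argument, and to establish the first statement by a Khintchine-type Borel-Cantelli argument.

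Assume the first statement. Then the countable intersection
$$\Sigma:=\bigcap_{n\geq 1}\Psi_n\cap\mathcal{O}_\infty$$
has full Haar measure as an intersection of full-measure sets. For $\alpha\in\Sigma$ and each $n\geq 1$, the assumption $\alpha\in\Psi_n$ yields infinitely many $n$-approximations, so I successively choose pairwise distinct $n$-approximations $f_n/g_n$ of $\alpha$. Since every non-zero element of $A$ has non-positive valuation at $P_\infty$ (regularity outside $P_\infty$), one has $|g_n|_\infty\geq 1$, hence
$$0\leq\left|\alpha-\frac{f_n}{g_n}\right|_\infty |g_n|_\infty^2<|\pi|_\infty^n\longrightarrow 0,$$
which also forces $f_n/g_n\to\alpha$; this settles the second statement.

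For the first statement, fix $M\in\mathbb{Z}$ and, for each class $[g]\in(A\setminus\{0\})/\mathbb{F}^{\times}$, put
$$E_g:=\bigcup_{\substack{f\bmod g\\ (f)+(g)=A}} B\!\left(\tfrac{f}{g},\,\tfrac{|\pi|_\infty^M}{|g|_\infty^2}\right)\cap\mathcal{O}_\infty.$$
Then $\alpha\in\Psi_M\cap\mathcal{O}_\infty$ exactly when $\alpha$ lies in $E_g$ for infinitely many classes $[g]$, since this forces $|g|_\infty$ to be unbounded and the corresponding approximations to converge to $\alpha$. The divergent Borel-Cantelli lemma then reduces the proof to (i) $\sum_{[g]}\mu(E_g)=\infty$, and (ii) a quasi-independence estimate $\mu(E_g\cap E_{g'})\leq C\,\mu(E_g)\mu(E_{g'})$ for all but finitely many pairs $g\neq g'$. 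For (i), distinct coprime residues $f\bmod g$ yield approximations separated by at least $1/|g|_\infty$ (again because elements of $A\setminus\{0\}$ have norm $\geq 1$), so the balls in the union are disjoint once $|g|_\infty$ is large enough, yielding $\mu(E_g)=\#(A/g)^{\times}\cdot|\pi|_\infty^M/|g|_\infty^2\gtrsim|\pi|_\infty^M/|g|_\infty$; a Riemann-Roch count of classes $[g]$ of given norm then makes $\sum_{[g]}\mu(E_g)$ diverge. For (ii), observe that $\alpha\in E_g\cap E_{g'}$ with $[g]\neq[g']$ forces the element $fg'-f'g\in A$ to satisfy $|fg'-f'g|_\infty<|\pi|_\infty^M\max(|g|_\infty,|g'|_\infty)/\min(|g|_\infty,|g'|_\infty)$, which bounds the number of admissible pairs $(f,f')$.

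The main technical obstacle is the quasi-independence estimate (ii), which in the case $A=\mathbb{F}[x]$ is essentially automatic via continued fractions but which, in the absence of continued fractions (see Example~\ref{E2}), must be extracted by the cross-differencing argument sketched above. Alternatively, and more in the spirit of the graph methods developed in \S\ref{subsection quotients}, one may deduce both (i) and (ii) from the ergodicity of the $\Gamma=\mathrm{GL}_2(A)$-action on $\partial\mathfrak{t}$ equipped with the measure corresponding to Haar measure on $\mathcal{O}_\infty$: for almost every boundary point, the induced walk in $\Gamma\backslash\mathfrak{t}$ enters each cusp infinitely often and at arbitrary depth, and each sufficiently deep excursion directly produces an $M$-approximation of prescribed quality.
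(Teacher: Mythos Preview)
Your reduction of the second statement to the first, via $\Sigma=\bigcap_{n\geq 1}\Psi_n\cap\mathcal{O}_\infty$ and the choice of one $n$-approximation for each $n$, is correct and is exactly what the paper does.

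For the first statement, however, your main line of argument is both different from the paper's and incomplete. The paper does not touch Borel--Cantelli or quasi-independence at all. Instead it interprets the Haar measure on $\mathcal{O}_\infty$ as the law of a random descending ray in $\mathfrak{t}$, pushes this forward to a time-homogeneous Markov chain on \emph{directed edges} of $\Gamma\backslash\mathfrak{t}$, and proves (Lemmas~\ref{L51}--\ref{L53}) that every directed edge is a recurrent state. Combined with the graph-theoretic characterization of $M$-approximability (Proposition~\ref{prop well apprximable}: $\alpha$ is $M$-approximable iff its walk hits $\mathbf{v}_{M+1}$ infinitely often), recurrence of any single edge inside the sub-ray $\mathfrak{r}_M$ immediately gives $m(\Psi_M)=1$. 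No independence-type estimate is needed; the combinatorics of cusps (Lemma~\ref{L61}) forces the walk to return to the finite center almost surely, and from there every edge is reachable with positive probability.

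Your Borel--Cantelli sketch, by contrast, has a genuine gap precisely where you flag it: step~(ii). The cross-difference bound you write down only controls $|fg'-f'g|_\infty$, but to get $\mu(E_g\cap E_{g'})\leq C\,\mu(E_g)\mu(E_{g'})$ you must count, for each such small cross-difference, how many pairs $(f\bmod g,\,f'\bmod g')$ realize it, and then sum over the admissible values. For general Dedekind $A$ with nontrivial class group this counting is delicate (the ``denominators'' $g$ need not be principal representatives of all ideal classes, and the analogue of Euler's totient does not behave as cleanly as over $\mathbb{F}[x]$), and your sketch supplies none of it. Your closing sentence about ergodicity and cusp excursions is in the right spirit, but as written it is an assertion rather than an argument; the paper's contribution is exactly to make that heuristic rigorous via recurrence of the edge-chain.
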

Our second result shows that the exponent
$2$ above is actually optimal:
\begin{thm}\label{T1b}
Let $\rho>2$, and let $\Phi_{M,\rho}$ the set of elements $\alpha\in\mathcal{O}_\infty$ that can be 
written as $\alpha=\lim_{n\rightarrow\infty}\frac{f_n}{g_n}$ with 
\begin{equation}\label{eqT1b}
\left|\alpha-\frac {f_n}{g_n}\right|_\infty<\frac{|\pi|_{\infty}^M}{|g_n|_\infty^\rho},
\end{equation}
then $\Phi_{M,\rho}$ has null Haar measure.
\end{thm}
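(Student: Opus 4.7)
The plan is to apply a Borel--Cantelli argument of Khintchine type. After removing the null set of rationals $f/g \in \mathcal{O}_\infty$, any $\alpha \in \Phi_{M,\rho}$ must satisfy the inequality \eqref{eqT1b} for infinitely many pairs $(f_n,g_n)$ with $|g_n|_\infty \to \infty$ (otherwise only finitely many distinct quotients $f/g$ would appear in the sequence, forcing $\alpha$ to be rational). Setting $d := \deg(P_\infty)$ and grouping by the integer $k$ with $|g|_\infty = q^k$, I would define
$$F_k \;:=\; \bigcup_{\substack{g \in A \\ |g|_\infty = q^k}}\; \bigcup_{\substack{f \in A \\ f/g \in \mathcal{O}_\infty}}\; \Bigl\{\beta \in \mathcal{O}_\infty : \bigl|\beta - \tfrac{f}{g}\bigr|_\infty < |\pi|_\infty^M\, q^{-k\rho}\Bigr\}.$$
Then $\Phi_{M,\rho}$ is contained, up to a null set, in $\limsup_k F_k$, so the task reduces to showing $\sum_k \mu(F_k) < \infty$.

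For the count, Riemann--Roch applied to the divisor $kP_\infty$ gives $\dim_{\mathbb{F}_q}\{h \in A : |h|_\infty \leq q^k\} = kd - g(X) + 1$ for $k$ large enough (with a crude linear bound for small $k$); hence both the number of $g \in A$ with $|g|_\infty = q^k$ and the number of $f \in A$ with $f/g \in \mathcal{O}_\infty$ are $O(q^{kd})$. Since the residue field of $\mathcal{O}_\infty$ has cardinality $q^d$, a ball of $K_\infty$-radius at most $|\pi|_\infty^M\,q^{-k\rho}$ has Haar measure at most a constant multiple of $q^{-d(M+k\rho)}$ under the normalization $\mu(\mathcal{O}_\infty)=1$. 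Combining,
$$\mu(F_k) \;\leq\; c\, q^{kd}\cdot q^{kd}\cdot q^{-d(M+k\rho)} \;=\; c\, q^{-Md}\, q^{kd(2-\rho)}.$$
Because $\rho > 2$, the exponent is strictly negative, $\sum_k \mu(F_k)$ is a convergent geometric series, and the easy direction of Borel--Cantelli yields $\mu(\Phi_{M,\rho}) = 0$.

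The main point requiring care--and the only potential obstacle--is tracking the role of $d = \deg(P_\infty)$ throughout. Three $d$-dependent contributions enter the estimate: two factors of $q^{kd}$ from the Riemann--Roch counts of numerators and denominators, and one factor of $q^{-sd}$ from the $d$-th power scaling of the Haar measure of a ball of radius $q^{-s}$ in $K_\infty$. These conspire to give the sharp threshold $\rho = 2$ independently of $\deg(P_\infty)$, in agreement with the exponent appearing in the definition of an $M$-approximation; a naive computation that ignores the measure-scaling would instead suggest $\rho > 2d$, which would be too weak for the stated theorem.
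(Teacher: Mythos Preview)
Your argument is correct and takes a genuinely different route from the paper. You give the classical Khintchine--Borel--Cantelli covering argument: discard rationals, observe that for irrational $\alpha\in\Phi_{M,\rho}$ the denominators $|g_n|_\infty$ must be unbounded, cover $\Phi_{M,\rho}$ (up to a null set) by $\limsup_k F_k$ where $F_k$ is the union of balls about all $f/g$ with $|g|_\infty=q^k$, estimate the number of such pairs $(f,g)$ by Riemann--Roch as $O(q^{2kd})$ and the measure of each ball as $O(q^{-d(M+k\rho)})$, and sum the resulting geometric series. The paper instead remains inside its graph-theoretic framework: by Lemma~\ref{PWA2}, an approximation $f/g$ with $|g|_\infty=q^n$ satisfying \eqref{eqT1b} forces the walk of $\alpha$ on $\Gamma\backslash\mathfrak{t}$ to climb the standard cusp to a height exceeding $M+(\rho-2)n$; the probability that the random walk does this at step $n$ decays geometrically in that height, and Borel--Cantelli is applied to these walk events rather than to covers by balls.

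Your approach is more elementary and entirely self-contained: it uses nothing about the Bruhat--Tits tree, the quotient graph, or the Markov chain, only Riemann--Roch and the first Borel--Cantelli lemma. The paper's argument is chosen for thematic unity, reusing the same random-walk machinery that yields the full-measure statement of Theorem~\ref{T1}, so that both halves flow from a single picture. Your explicit tracking of the three factors of $q^d$ (two Riemann--Roch counts and one from the scaling of ball measures) is exactly right and is what makes the critical exponent $\rho=2$ independent of $\deg P_\infty$; in the paper's proof this cancellation is hidden inside the walk interpretation, where the branching of the tree has already absorbed the relevant normalizations.
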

Both of the preceding results are obtained by studying a Markov Chain
associated to the $\mathbf{S}$-graph of $\Gamma$.

Even though the value 
$N=\left\lceil\frac{2g(X)-1}{\deg(P_\infty)}\right\rceil$
seem to play no role in the preceding result, it plays a significant role
in the proofs, particularly in the graph-theoretical interpretation
of approximation. 
It is an upper bound on the set of values
$M$ for which the set $\Psi_M$ is the whole field, as Theorem~\ref{T2} shows:
\begin{thm}\label{T2}
For any ring $A$ we have $\mathcal{O}_{\infty}\not\subset \Psi_{N+1}$.
Moreover, if $\mathcal{O}_\infty\subseteq \Psi=\Psi_N$, then $A$ is isomorphic to the polynomial ring $\mathbb{F}[x]$.
\end{thm}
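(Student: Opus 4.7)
The plan is to use the graph-theoretic dictionary between $M$-approximations and walks in $\Gamma\backslash\mathfrak{t}$ developed in the preceding sections: an $M$-approximation of $\alpha\in K_\infty$ corresponds to a vertex on the geodesic ray in $\mathfrak{t}$ to the end $\alpha$ whose image in $\Gamma\backslash\mathfrak{t}$ penetrates a cusp to depth at least $M-N$. Thus $\alpha\in\Psi_M$ if and only if the projected walk visits such vertices infinitely often. Under this correspondence, failing to belong to $\Psi_M$ is a combinatorial condition on the walk.

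For the first assertion $\mathcal{O}_\infty\not\subset\Psi_{N+1}$, I plan to construct, for every ring $A$, a hyperbolic element $\gamma\in\Gamma$ whose translation axis in $\mathfrak{t}$ projects to a closed circuit in $\Gamma\backslash\mathfrak{t}$ that touches every cusp at most at its entry vertex (depth $0$). An attracting fixed point $\alpha$ of $\gamma$ is then a quadratic irrational whose geodesic ray coincides with the axis past a finite initial prefix; no vertex beyond that prefix projects to depth $\geq 1$ in any cusp, so $\alpha$ admits only finitely many $(N+1)$-approximations. Translating by a suitable element of $A$ places $\alpha$ in $\mathcal{O}_\infty$. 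The existence of such $\gamma$ is guaranteed by the combinatorics of the $\mathbf{S}$-graph, since closed circuits always exist either inside the center or via immediate returns at cusp entry vertices.

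For the second assertion I argue by contrapositive: assuming $A\not\cong\mathbb{F}[x]$, I construct $\alpha\in\mathcal{O}_\infty\setminus\Psi$. The hypothesis implies that $\Gamma\backslash\mathfrak{t}$ is not a single ray, since the ray case characterizes $A\cong\mathbb{F}[x]$. Hence the center of $\Gamma\backslash\mathfrak{t}$ is non-trivial: either it carries positive first Betti number when $g(X)>0$, or it contains multiple cusp attachment points joined by at least one edge when $g(X)=0$ and $\deg(P_\infty)>1$. In either situation I plan to exhibit a hyperbolic element $\sigma\in\Gamma$ whose axis projects entirely into the center, and to take $\alpha$ as one of its boundary fixed points. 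The geodesic in $\mathfrak{t}$ associated with $\alpha$ eventually avoids every cusp, so only finitely many of its vertices yield $N$-approximations, giving $\alpha\notin\Psi$.

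The main obstacle is the subcase $g(X)=0$ with $\deg(P_\infty)>1$, where the center may topologically be a tree and no closed circuit is visible at first glance. Here I plan to invoke Serre's explicit description of $\Gamma\backslash\mathfrak{t}$ via the class group of $A$ and the ramification data at $P_\infty$, which produces identifications in $\Gamma\backslash\mathfrak{t}$ supplying the required hyperbolic element. Once this geometric input is in place, the argument of Part 2 runs parallel to Part 1, and the conclusion $A\cong\mathbb{F}[x]$ follows from the contrapositive.
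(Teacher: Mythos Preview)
Your proposal has a genuine gap. In Part~2 you assert that the center of the $\mathbf{S}$-graph has positive first Betti number whenever $g(X)>0$; this is false. Example~\ref{E5} exhibits an elliptic curve over $\mathbb{F}_2$ (so $g=1$) whose $\mathbf{S}$-graph is a \emph{tree} with a single cusp, so the center is a finite tree with Betti number zero. Your case split therefore collapses, and the appeal to ``Serre's explicit description\ldots producing identifications'' for the remaining subcase is not a proof either. There is also an imprecision in your dictionary that affects Part~1: by Proposition~\ref{prop well apprximable}, membership in $\Psi_M$ is governed by visits to the specific vertex $\mathbf{v}_{M+1}$ on the \emph{standard} ray, not by depth in an arbitrary cusp; non-standard cusps correspond to non-principal ideal classes and are irrelevant for approximations $f/g$ with $fA+gA=A$. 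In particular, your requirement that the axis ``touches every cusp at most at its entry vertex'' is both too strong (it is vacuously impossible when the $\mathbf{S}$-graph is a single ray) and aimed at the wrong target.

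The paper's proof is both simpler and avoids these issues: it never uses hyperbolic elements, but builds the walk of $\alpha$ one step at a time, with no periodicity required. For the first statement, whenever the walk reaches $\mathbf{v}_{N+1}$ one sends it back to $\mathbf{v}_N$ (legal since $m_{\mathbf{v}_{N+1},\mathbf{v}_N}=q^{\deg P_\infty}\ge2$ by the cusp structure), so it never reaches $\mathbf{v}_{N+2}$. For the second statement, Lemma~\ref{lemma m < q-1} shows that a walk avoiding $\mathbf{v}_{N+1}$ exists as soon as $m_{\mathbf{v}_N,\mathbf{v}_{N+1}}\le q^{\deg P_\infty}-1$; iterating this observation down the standard ray forces the $\mathbf{S}$-graph to be a ray or a maximal path, and then a short orbit count for $\mathrm{GL}_2(\mathbb{F})$ acting on $\mathbb{P}^1(\mathbb{F}[P_\infty])$, combined with Lemma~\ref{lemma stab in gamma}, pins down $g=0$ and $\deg P_\infty=1$.
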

Note that the condition $A \cong \mathbb{F}[x]$ 
for the exceptional case implies
$K=\mathbb{F}(x)$, and also
$\mathcal{C}=\mathbb{P}^1_{\mathbb{F}}$ and $\deg(P_\infty)=1$. This is the case
treated  in \cite{Paulin}.

Next result shows that
  the optimal constant $M''$ satisfying $\mathcal{O}_\infty\subseteq\Psi_{M''}$ can be explicitly computed
   from $N$ and the combinatorial structure of the $\mathbf{S}$-graph of $\Gamma$. To make this precise, for every pair
  of neighboring vertices $(v,w)$ in $\Gamma\backslash\mathfrak{t}$ we 
  denote by $m_{v,w}$ the number of neighbors of a pre-image of $v$ in $\mathfrak{t}$ that are pre-images of $w$. This number is independent
  of the choice of a pre-image. The numbers $m_{v,w}$ are well known
  when $v$ and $w$ are located in a cusp, but they need to be computed
  separately for vertices in the center. See \S~\ref{section walk} and \S~\ref{section examples} for details.

\begin{thm}\label{T3}
The largest constant  $M''$ such that 
$\mathcal{O}_\infty\subseteq\Psi_{M''}$ has
the form $M''=N-1-\kappa$, where $\kappa$ is bounded by the diameter
of the center of the $\mathbf{S}$-graph and can be explicitly computed 
from the $\mathbf{S}$-graph of $\Gamma$ and the numbers $m_{v,w}$.
\end{thm}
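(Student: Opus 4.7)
The plan is to exploit the dictionary between elements of $K_\infty$ and geodesic rays in the Bruhat-Tits tree $\mathfrak t$. To each $\alpha\in\mathcal O_\infty$ one associates a geodesic ray $\gamma_\alpha=(v_n)_{n\ge 0}$ starting at the distinguished vertex $v_0$ fixed by $\mathrm{GL}_2(\mathcal O_\infty)$, together with its canonical sequence of approximants $(f_n/g_n)$ extracted from the lattice data at $v_n$. The framework of the paper, already invoked for Theorems~\ref{T1} and~\ref{T2}, shows that the approximation exponent of $f_n/g_n$ is governed by the projected walk $(\overline{v_n})$ on $\Gamma\backslash\mathfrak t$: once the walk enters a cusp and stops backtracking one obtains $N$-approximations, reproducing the classical picture of \cite{Paulin} for $A=\mathbb F[x]$, whereas each ``unproductive'' edge traversed inside the center, or each backtracking step, costs a bounded amount of approximation-exponent that is expressible through the local multiplicities $m_{v,w}$.

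\textbf{Lower bound $M''\ge N-1-\kappa$.} Since the center is finite and $\gamma_\alpha$ is a tree geodesic, the projected walk must eventually enter some cusp $C$ at a first time $n_0$ and then escape to infinity along $C$. The tail $(v_n)_{n\ge n_0}$ produces the usual cusp approximations, while the entrance delay $n_0$ translates into a loss of at most $1+\kappa_\alpha$ in the exponent, where $\kappa_\alpha$ is a combinatorial weight accumulated along the portion of $(\overline{v_n})_{n\le n_0}$ that lives in the center. Expressing this weight as a sum of local contributions coming from $m_{v,w}$ and bounding the total by the longest shortest-path from $\overline{v_0}$ to a cusp entry gives $\kappa_\alpha\le\kappa\le\mathrm{diam}(\text{center})$, uniformly in $\alpha$. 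Hence $\mathcal O_\infty\subseteq\Psi_{N-1-\kappa}$.

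\textbf{Upper bound and explicit form of $\kappa$.} To prove sharpness I would exhibit an $\alpha$ whose ray $\gamma_\alpha$ is forced through a vertex of the center realizing the worst-case weighted distance to every cusp. Such $\alpha$ is obtained by prescribing a finite initial segment of $\gamma_\alpha$ and using the surjection $\partial\mathfrak t\to\mathbb P^1(K_\infty)$; a best-approximation argument, standard in the tree setting for $\mathrm{SL}_2$, then guarantees that no rational can approximate $\alpha$ better than its canonical $\gamma_\alpha$-approximants, which by construction only achieve exponent $N-1-\kappa$. The value of $\kappa$ is then read off as the minimum over cusps $C$ of a weighted shortest-path length in the S-graph, the weight of an edge $(v,w)$ being a prescribed function of the multiplicity $m_{v,w}$.

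\textbf{Main obstacle.} The crux of the proof is the per-edge calculation that pins down the exact contribution of a center edge to the deficit. Along cusp edges this is the familiar computation, but along center edges the multiplicities $m_{v,w}$ reflect a nontrivial count of lattice neighbors under the $\Gamma_v$-action, and verifying that the deficit is additive along walks and given by the correct $m_{v,w}$-weighted formula is where the technical work concentrates; once this is established, both bounds follow by additivity and a shortest-path minimization in the S-graph.
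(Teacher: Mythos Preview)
Your lower-bound argument rests on a claim that is false for precisely the elements you must handle. You assert that ``the projected walk must eventually enter some cusp $C$ at a first time $n_0$ and then escape to infinity along $C$.'' In fact this behaviour characterizes \emph{rational} $\alpha$: the paper's Lemma~\ref{L63} proves that for every irrational $\alpha\in\mathcal O_\infty$ the walk $\big(v_n(\alpha)\big)_n$ returns to the center infinitely often and therefore never escapes along any cusp. Consequently there is no ``tail $(v_n)_{n\ge n_0}$ producing the usual cusp approximations,'' and your accounting via a single entrance delay $n_0$ collapses. Relatedly, your ``canonical sequence of approximants extracted from the lattice data at $v_n$'' is not the mechanism at work here: by Lemma~\ref{PWA2} and Proposition~\ref{prop well apprximable}, an $M$-approximation of $\alpha$ (for $M\ge N$) exists precisely when the walk reaches $\V_{M+1}$ in the \emph{standard} cusp coming from $\V_M$, and $\alpha$ is $M$-approximable iff this happens infinitely often. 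Approximations thus correspond to excursions into one specific cusp, not to eventual escape.

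The paper's actual argument is short and uses an auxiliary element. Given irrational $\alpha$, choose an index $i$ at which $v_i(\alpha)$ is a local maximum of the distance $r$ to $\V_{N+1}$ (infinitely many such $i$ exist by Lemma~\ref{L63}, and one can take $v_i(\alpha)$ in the center). Pick an irrational $\beta$ with $\tilde v_i(\beta)=\tilde v_i(\alpha)$ but whose walk runs straight to $\V_{N+1}$ in $r$ steps. By Lemma~\ref{PWA2}, $\beta$ has a good approximation $f/g$ with $\nu_\infty(\beta-f/g)>i+r$; since $\alpha$ and $\beta$ lie in the same ball of valuation $i$, the same $f/g$ gives $\nu_\infty(\alpha-f/g)\ge i=(N+1-r)-2\nu_\infty(g)$, i.e.\ an $(N-r)$-approximation of $\alpha$. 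The bound $r-1\le\mathrm{diam}(\text{center})$ is \emph{unweighted} graph distance; the numbers $m_{v,w}$ play no role in the diameter bound. They enter only in the exact computation of $\kappa$, because they determine which walks in the quotient are realizable as projections of tree geodesics (a backtrack from $v$ to the previous vertex $w$ is possible only when $m_{v,w}>1$), and hence which worst-case distances $r$ are actually attained by some $\alpha$---this is how the sharp $\kappa$ is extracted in the examples of \S\ref{section examples}, not via a weighted shortest-path formula.
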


In \S~\ref{section examples} some examples of
$M''$-approximable elements, for this optimal $M''$,
are described as the common point $\alpha\in K_\infty$ in a decreasing sequence of closed balls that 
we make precise in terms of suitable walks in the 
Bruhat-Tits tree. We give a precise construction of these walks 
for points $P_\infty$ of degree $2$ or $3$ when 
$X=\mathbb{P}^1_{\mathbb{F}}$, and  for one example where $X$ 
is a projective elliptic curve. This provides us with an explicit
 method to compute $\kappa$ from the given data $\big((m_{v,w})_{v,w}, \Gamma \backslash \mathfrak{t}\big)$.
 In some cases we can actually described $\alpha$
 as an explicit quadratic irrational.

\section{Conventions on graphs}\label{section graphs}

We start by recalling some basic definitions on graph theory.
We define a graph $\mathfrak{g}$ as a $5$-tuplet $\mathfrak{g}=(V,E,s,t,r)$ satisfying the following statements:
\begin{itemize}
\item[(1)] $V=V_\mathfrak{g}$ and $E=E_\mathfrak{g}$ are sets, respectively called the vertex set and the edge set.
\item[(2)] The three last symbols denote functions. They are the source  $s=s_{\mathfrak{g}} :E\rightarrow V$, 
the target $t=t_{\mathfrak{g}} :E\rightarrow V$ and the reverse $r=r_{\mathfrak{g}} :E\rightarrow E$, and satisfy the identities 
$r(a)\neq a$,\ $ r\big(r(a)\big)=a$ and $s\big(r(a)\big)=t(a)$, for each edge $a$.
\end{itemize}
Our definition is chosen in a way that allows the existence of multiple edges and loops.
Graphs are the objects in a category $\mathfrak{Graphs}$ whose morphisms are simplicial maps
$\gamma:\mathfrak{g}\rightarrow \mathfrak{g}'$, i.e., pair of functions $\gamma_V: V_\mathfrak{g} \rightarrow V_{\mathfrak{g}'}$ and $\gamma_E: \mathrm{E}_\mathfrak{g} \rightarrow \mathrm{E}_{\mathfrak{g}'}$ preserving the source, target and reverse functions.
Group actions are defined analogously.
A group action of $G$ on a graph $\mathfrak{g}$ has no inversions if $g \cdot a\neq r(a)$, for every edge $a$ and every element $g\in G$.
An action of $G$ on $\mathfrak{g}$ without inversions defines a quotient graph\footnote{When the action of $G$ on $\mathfrak{g}$ has inversions, we can replace $\mathfrak{g}$ by its first 
barycentric subdivision in order to obtain a graph where $G$ acts without inversions. 
Thus, the quotient graph can also be defined in that case.} $\mathfrak{g}_G:=G \backslash \mathfrak{g}$ by setting $V_{\mathfrak{g}_G} =G \backslash V_\mathfrak{g}$, $E_{\mathfrak{g}_G}=G \backslash E_\mathfrak{g}$ and defining $s_{\mathfrak{g}_G}, t_{\mathfrak{g}_G}$ and $r_{\mathfrak{g}_G}$ in a way that turns the canonical projection into a simplicial function.

Two vertices $v,w$ in a graph $\mathfrak{g}$ are neighbors if there exists 
an edge $e \in E_{\mathfrak{g}}$ satisfying $s(e)=v$ and $t(e)=w$.
The valency of a vertex $v$ is the cardinality of the set of edges having $v$ as a source.
The pairs $\lbrace e, r(e) \rbrace$ of mutually reverse edges are called geometric edges of $\mathfrak{g}$.
A finite line in $\mathfrak{g}$, usually denoted by $\mathfrak{p}$, is a 
subgraph whose vertex set $V_{\mathfrak{p}}=\lbrace v_i \rbrace_{i=0}^{n}$ 
satisfies that $v_i$ and $v_{i-1}$ are neighbors, for all $0 \leq i \leq n-1$, 
and $v_i \neq v_j$, for $i \neq j$.
A graph $\mathfrak{g}$ is connected if, given any two vertices $v,w \in 
V_{\mathfrak{g}}$, there exists a finite line $\mathfrak{p}$ connecting them. 
A cycle in $\mathfrak{g}$ is a finite line with an additional geometric edge connecting $v_n$ with $v_0$. 
A tree is a connected graph without cycles.

Additionally, we define a ray $\mathfrak{r}$ in $\mathfrak{g}$ 
as a graph with a vertex set $V_{\mathfrak{r}}=\lbrace v_i  | \, \, i \in \mathbb{Z} \rbrace$, all distinct, with $v_i$ and $v_{i-1}$ neighbors.
The initial vertex of $\mathfrak{r}$ is the vertex $v_0$ above.
Let $\mathfrak{r}_1$ and $\mathfrak{r}_2$ be two rays whose vertex sets are 
respectively denoted by 
$V_1=\left\lbrace v_i | \, \,  i\in \mathbb{Z}_{\geq 0}\right\rbrace $ and 
$V_2=\left\lbrace v'_i | \, \,  i\in \mathbb{Z}_{\geq 0}\right\rbrace$. 
We say that $\mathfrak{r}_1$ and $\mathfrak{r}_2$ are equivalent if there exists 
$t, i_0 \in \mathbb{Z}_{\geq 0}$ such that $v_{i}= v_{i+t}'$, for all 
$i \geq i_0$.
In this case we write $\mathfrak{r}_1 \sim \mathfrak{r}_2$.
We define the visual limit, also called end set, 
$\partial_{\infty}(\mathfrak{g})$ of $\mathfrak{g}$ as the set of equivalence
classes of rays $\mathfrak{r}$ in $\mathfrak{g}$.
The class of $\mathfrak{r}$ is called visual limit of $\mathfrak{r}$ and denoted 
by $\partial_{\infty}(\mathfrak{r})$. 
When consider a quotient graph $G\backslash\mathfrak{g}$,
it is not necessarily the case that we can identify the visual limit
$\partial_\infty(G\backslash\mathfrak{g})$ with the corresponding
quotient set $G\backslash\partial_\infty(\mathfrak{g})$.
For instance, the quotient of
an infinite graph might be finite, and therefore the visual limit might be empty. However, it is possible to show that every ray in $G\backslash\mathfrak{g}$ can be lifted to a ray in $\mathfrak{g}$,
whence we can identify $\partial_\infty(G\backslash\mathfrak{g})$
with a quotient of the form $G\backslash L$ for $L\subseteq
\partial_\infty(\mathfrak{g})$.

A maximal path in a graph $\mathfrak{g}$ is a doubly infinite line, 
i.e. the union of two rays with the same initial vertex, that 
intersect only in that vertex.
For each pair of different visual limits $(a,b)$ of a tree $\mathfrak{t}$ 
there exists a unique maximal path $\mathfrak{p}[a,b]$ connecting them, 
i.e., such that $\partial_{\infty}(\mathfrak{p}[a,b])=\lbrace a,b \rbrace$.
If $(a,b,c)$ is a triplet of  different visual limits 
of $\mathfrak{t}$,
the intersection of the paths $\mathfrak{p}[a,b]$
and $\mathfrak{p}[a,c]$ is a ray whose visual limit is $a$.
Let $w$ be the initial vertex of this ray. Then $w$
is the unique vertex in the graph-theoretical intersection 
$\mathfrak{p}[a,b] \cap  \mathfrak{p}[b,c] \cap \mathfrak{p}[a,c]$,
and it is called the incenter $\mathrm{In}(a,b,c)$ of the triplet $(a,b,c)$.

\section{The Bruhat-Tits tree of $\mathrm{SL}_2$}\label{subsection BT}

Let $F$ be field with a discrete valuation map $\nu: F \to \mathbb{Z} \cup \lbrace \infty \rbrace$. We mostly assume $F=K_\infty$ as in \S~\ref{section intro}.
An example of tree is the Bruhat-Tits tree (building) $\mathfrak{t}=\mathfrak{t}(F)$ associated to the reductive group $\mathrm{SL}_2$ and the field $F$ 
(c.f. \cite[Chap. II, \S 1]{SerreTrees} or \cite{BT1}).
We can interpret the Bruhat-Tits tree $\mathfrak{t}$ from the
topological structure of $F$.
Indeed, the vertex set of $\mathfrak{t}$ corresponds to the set of closed balls 
in $F$. In 
order to keep the notations simpler, we often make the notations for vertices
match the notations of the corresponding ball. For example, the ball of center $a$
and radius $|\pi^r|$ is denoted $B_a\left[|\pi|^r\right]$, or simply $B_{a}^{[r]}$.
Hence, the corresponding vertex is denoted $w_{a}^{[r]}$.

This topological interpretation of $\mathfrak{t}$ is very useful in order to have a concrete representation of its visual limit. 
Indeed, let $\pi=\pi_F$ be a uniformizing parameter of $F$.
The set of neighbors of $w_{a}^{[r]}$ is the set 
$\left \lbrace w_{a+ s \pi^{r}}^{[r]} \Big| s \in S \right \rbrace \cup \left 
\lbrace w_a^{[r-1]} \right \rbrace$, where $S$ is a representative system 
of the residue field of $F$.
The upwards edge of $w_{a}^{[r]}$ is the edge $e$ satisfying 
$s(e)=w_{a}^{[r]}$ and $t(e)=w_{a}^{[r-1]}$. Any edge of this form is called 
an upwards edge. 
The ascending ray starting from $w_a^{[r]}$ is the ray whose vertex set is
$V=\left\lbrace w_{a}^{[r]},w_{a}^{[r-1]},w_{a}^{[r-2]},\dots  \right\rbrace$.
Note that the edge from any of these vertices to the next is an upwards edge.
All ascending rays are equivalent and their visual limit is identified
with the infinite point $\infty\in\mathbb{P}^1(F)$. 
The reverse of an upwards edge is called a downwards edge.
We define a descending ray starting from $w_a^{[r]}$ as a ray whose vertex set 
is $V=\left\lbrace w_{a}^{[r]},w_{a}^{[r+1]},w_{a}^{[r+2]},\dots  \right\rbrace$.
Note that this ray depends on the choice of the element $a$ as the center
of the ball $B_a^{[r]}$, and we identify $a$ with the corresponding visual limit.
The edge from $w_a^{[r]}$ that points towards $a$ is the one whose target is
$w_a^{[r+1]}$.
In the descending ray above, the edge from one vertex to the next is, by definition, a downwards edge pointing towards $a$.
 It is apparent that any ray $\mathfrak{r}$ either is
an ascending ray or contains a descending sub-ray $\mathfrak{r}'$, 
and in the latter case $\mathfrak{r}$ and $\mathfrak{r}'$ are
equivalent.
Next result is straightforwards if one checks that the given vertex
set define, in each case, a maximal path whose set of 
visual limits is precisely $\lbrace a,b \rbrace$:

\begin{lem}\label{lemma pab}
The vertex set of $\mathfrak{p}[a,b]$ is 
\begin{itemize}
\item[(1)] $V=\left \lbrace w_{a}^{[r]} \Big| r \geq \nu(a-b) \right \rbrace \cup \left \lbrace w_{b}^{[r]} \Big| r \geq \nu(a-b) \right \rbrace$, when $a,b \in F$, and
\item[(2)] $V=\left \lbrace w_{a}^{[r]} \Big| r \in \mathbb{Z} \right \rbrace$, when $b=\infty$.\qed
\end{itemize}
\end{lem}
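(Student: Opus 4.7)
The plan is to exploit the uniqueness of the maximal path between two given visual limits: if we can exhibit, for each case, a doubly infinite line whose set of visual limits is exactly $\lbrace a,b\rbrace$, then that line must coincide with $\mathfrak{p}[a,b]$. So in each case I will (i) check that the proposed vertex set does carry the structure of a doubly infinite line, and (ii) compute the visual limit of each of its two ``halves''.

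For case (2), I would simply observe that the set $V=\lbrace w_a^{[r]}\mid r\in\mathbb{Z}\rbrace$, together with the edges joining $w_a^{[r]}$ with $w_a^{[r-1]}$, is the union of the ascending ray starting at $w_a^{[0]}$ and the descending ray towards $a$ starting at $w_a^{[0]}$. By the discussion preceding the lemma, the first one has visual limit $\infty=b$ and the second has visual limit $a$, and they meet only in $w_a^{[0]}$, so their union is a maximal path with visual limits $\lbrace a,\infty\rbrace$.

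For case (1), set $r_0:=\nu(a-b)$. The key observation to make is that the balls $B_a^{[r_0]}$ and $B_b^{[r_0]}$ coincide: by the ultrametric inequality, $|a-b|=|\pi|^{r_0}$ forces $b\in B_a^{[r_0]}$ and $a\in B_b^{[r_0]}$, and any element of a non-archimedean ball serves as a center. Hence $w_a^{[r_0]}=w_b^{[r_0]}$, so the two descending rays
\[
\left\lbrace w_a^{[r]}\mid r\geq r_0\right\rbrace\quad\text{and}\quad \left\lbrace w_b^{[r]}\mid r\geq r_0\right\rbrace
\]
share precisely this one vertex. Their union is therefore a doubly infinite line, and its two ends have visual limits $a$ and $b$ respectively, again by the description of descending rays from \S~\ref{subsection BT}.

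In both cases, uniqueness of the maximal path joining two distinct ends of a tree, recalled in \S~\ref{section graphs}, forces the resulting line to be $\mathfrak{p}[a,b]$. The only non-obvious step, and the one I would treat with care, is the identification $w_a^{[r_0]}=w_b^{[r_0]}$ in case (1); everything else reduces to reading off definitions. One could also argue more directly by describing which edge at each $w_a^{[r]}$ ``points toward $b$'' and checking that following these edges produces precisely the proposed set, but invoking the uniqueness of $\mathfrak{p}[a,b]$ seems cleanest.
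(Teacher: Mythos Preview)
Your proposal is correct and follows exactly the approach the paper indicates: the paper states that the result is straightforward once one checks that the given vertex set defines, in each case, a maximal path whose set of visual limits is precisely $\{a,b\}$, and leaves the details to the reader. You have supplied those details, including the key identification $w_a^{[r_0]}=w_b^{[r_0]}$ in case~(1), so there is nothing to add.
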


In the sequel, we denote by $B_{a,b}$ the smallest closed ball in $F$ simultaneously contained $a,b \in F$ with $a \neq b$.
More explicitly, we have $B_{a,b}=B_{a}^{[\nu(a-b)]}=B_{b}^{[\nu(a-b)]}$.
In particular $B_{a, b}=B_{b, a}$. We write $w_{a,b}$
for the vertex corresponding to $B_{a,b}$.
By inspection, using 
Lemma~\ref{lemma pab} to check that $w_{a,b}$
is in all three paths in either case, 
we concludes next result:

\begin{lem}\label{lemma incenter}
Let $(a,b,c)$ be a triplet of different points 
in $\mathbb{P}^1(F)$.
The incenter $\mathrm{In}(a,b,c)$ of a triplet $(a,b,c)$ 
equals to $w_{a,b}$ if either $c=\infty$, or if
$a,b,c\in F$ and $\nu(a-b) \geq \nu(b-c), \nu(a-c)$.\qed
\end{lem}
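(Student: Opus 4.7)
The plan is to exploit the characterization given at the end of \S~\ref{section graphs}: the incenter $\mathrm{In}(a,b,c)$ is the unique vertex lying in the triple intersection $\mathfrak{p}[a,b]\cap\mathfrak{p}[b,c]\cap\mathfrak{p}[a,c]$. Consequently, it is enough to verify that the candidate vertex $w_{a,b}$ belongs to each of those three paths, since uniqueness then forces $\mathrm{In}(a,b,c)=w_{a,b}$. The verification itself reduces to a direct index comparison once one writes down the vertex sets of the three paths using Lemma~\ref{lemma pab}.

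For the case $c=\infty$, I would first rewrite $w_{a,b}=w_{a}^{[\nu(a-b)]}=w_{b}^{[\nu(a-b)]}$ using the very definition of $B_{a,b}$. Then Lemma~\ref{lemma pab}(1) immediately places this vertex in $\mathfrak{p}[a,b]$, while Lemma~\ref{lemma pab}(2) ensures that $\mathfrak{p}[a,\infty]$ contains every $w_{a}^{[r]}$ with $r\in\mathbb{Z}$, and analogously for $\mathfrak{p}[b,\infty]$; hence $w_{a,b}$ sits in both rays to infinity, and this case is complete.

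For the case $a,b,c\in F$ with $\nu(a-b)\geq\nu(a-c),\nu(b-c)$, my first move is to note that the ultrametric inequality applied to $(a-b)+(b-c)=a-c$ forces $\nu(a-c)=\nu(b-c)$; this is the classical isosceles trichotomy, used in the direction that, since $\nu(a-b)$ is the largest of the three valuations, the remaining two must be equal. Writing $r=\nu(a-b)$ and $s=\nu(a-c)=\nu(b-c)$, one has $r\geq s$ and $w_{a,b}=w_{a}^{[r]}=w_{b}^{[r]}$, and Lemma~\ref{lemma pab}(1) then places this vertex in $\mathfrak{p}[a,b]$ (trivially), in $\mathfrak{p}[a,c]$ (because $r\geq s$), and in $\mathfrak{p}[b,c]$ (again because $r\geq s$).

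The only step with any genuine content is the preliminary ultrametric reduction $\nu(a-c)=\nu(b-c)$; the rest is a mechanical index check. I do not anticipate any real obstacle beyond keeping the three paths and their index ranges straight.
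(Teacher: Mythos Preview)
Your proposal is correct and follows exactly the approach the paper indicates: the paper's entire argument is the sentence ``By inspection, using Lemma~\ref{lemma pab} to check that $w_{a,b}$ is in all three paths in either case,'' which is precisely what you carry out. Your explicit ultrametric observation that $\nu(a-c)=\nu(b-c)$ under the hypothesis is the only detail the paper leaves implicit under ``by inspection,'' and you have handled it correctly.
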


For details on the action of $\mathrm{GL}_2(F)$ on vertices,
as much as its extension to visual limits we follow \cite[\S 4]{ArenasArenasContreras}.
The group $\mathrm{GL}_2(F)$ acts on the tree $\mathfrak{t}$ via simplicial maps, with scalar matrices acting trivially. 
The action of a matrix $\mathbf{g}$ on vertices is denoted by
$v\mapsto \mathbf{g} * v$.
The action on visual limits described via Moebius transformations on $\mathbb{P}^1(F)$. More precisely, the matrix $\mathbf{g}=\sbmattrix abcd$
acts on visual limit via $z\mapsto \gamma(z)=\frac{az+b}{cz+d}$. 
We emphasize this relation by writing $\mathbf{g}=\mathbf{g}_\gamma$.
This is natural since the group $\mathcal{M}(F)$ of Moebius transformations
is isomorphic to the quotient 
$\mathrm{PGL}_2(F) \cong \mathrm{GL}_2(F) /F^{*}$,
so $\mathbf{g}_\gamma$ is a lifting of $\gamma$. The lifting is not
unique, but any other lifting has the form $\lambda\mathbf{g}_\gamma$,
for $\lambda\in F^*$, so the abuse of notation should cause no confusion.
The main reason to use the functional notation for Moebius transformations
is that we use the derivative $z\mapsto \gamma'(z)$  as a computational
tool in later sections. With these notations, 
next lemma is immediate from the definitions:

\begin{lem}\label{lemma inc MT}
For each $\gamma \in \mathcal{M}(F)$ and each triplet $(a,b,c)$ of different points in $\mathbb{P}^1(F)$, we have that 
$\mathbf{g}_\gamma * \mathrm{In}(a,b,c)=\mathrm{In}\Big(\gamma(a),\gamma(b),\gamma(c)\Big) $, for any lifting $\mathbf{g}_\gamma$ of $\gamma$.\qed
\end{lem}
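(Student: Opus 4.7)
The plan is to reduce the statement to two facts already recorded in the paper: the simplicial action of $\mathrm{GL}_2(F)$ on $\mathfrak{t}$ extends to $\partial_{\infty}(\mathfrak{t})\cong\mathbb{P}^1(F)$ via the Moebius transformation $\gamma$, and the incenter is \emph{uniquely} characterized as the vertex in the triple intersection $\mathfrak{p}[a,b]\cap\mathfrak{p}[b,c]\cap\mathfrak{p}[a,c]$.

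First, I would establish the functoriality identity
\[
\mathbf{g}_\gamma * \mathfrak{p}[x,y] \;=\; \mathfrak{p}[\gamma(x),\gamma(y)]
\]
for any two distinct points $x,y\in\mathbb{P}^1(F)$. Since $\mathbf{g}_\gamma$ acts by simplicial automorphisms, the left-hand side is again a maximal path. Any ascending or descending ray with visual limit $z$ is carried to a ray whose new visual limit is $\gamma(z)$, by the compatibility between the action on the tree and the boundary action recalled in the paragraph preceding the lemma. Consequently the two ends of the image path are $\gamma(x)$ and $\gamma(y)$, and the uniqueness of $\mathfrak{p}[\cdot,\cdot]$ between prescribed visual limits forces the claimed equality.

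Second, since $\mathbf{g}_\gamma$ is a bijection on vertices it commutes with intersections, so applying the previous step three times gives
\[
\mathbf{g}_\gamma * \bigl(\mathfrak{p}[a,b]\cap\mathfrak{p}[b,c]\cap\mathfrak{p}[a,c]\bigr)
\;=\; \mathfrak{p}[\gamma(a),\gamma(b)]\cap\mathfrak{p}[\gamma(b),\gamma(c)]\cap\mathfrak{p}[\gamma(a),\gamma(c)].
\]
The unique vertex of the left-hand intersection is by definition $\mathrm{In}(a,b,c)$, and the unique vertex of the right-hand one is $\mathrm{In}\bigl(\gamma(a),\gamma(b),\gamma(c)\bigr)$; the equality of the two then yields the stated formula. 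Independence from the choice of lifting $\mathbf{g}_\gamma$ is automatic because any two liftings differ by a scalar in $F^{*}$ and scalar matrices act trivially on $\mathfrak{t}$.

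The only point that is not purely formal is the compatibility of the simplicial action on $\mathfrak{t}$ with the Moebius action on $\partial_{\infty}(\mathfrak{t})$, but this is exactly the identification $\partial_{\infty}(\mathfrak{t})\cong\mathbb{P}^1(F)$ reviewed just before the lemma and used in defining $\mathbf{g}_\gamma$; once this is invoked, the proof is a uniqueness argument.
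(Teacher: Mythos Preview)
Your proposal is correct and is precisely the natural unpacking of the paper's claim that the lemma is ``immediate from the definitions'': the paper gives no argument beyond the \texttt{\textbackslash qed}, and your proof spells out the intended reasoning --- equivariance of maximal paths under the simplicial action, compatibility with the boundary action, and uniqueness of the incenter as the triple intersection.
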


\section{Arithmetic Quotients}\label{subsection quotients}

As in \S~\ref{section intro}, let $X$ be a smooth irreducible projective curve defined over a finite field $\mathbb{F}$ and let $K$ be its function field.
The closed point $P_\infty$ of $\mathcal{C}$ induces a discrete valuation map $\nu=\nu_{\infty}: K \to \mathbb{Z} \cup \lbrace \infty \rbrace$.
Let $F=K_{\infty}$ be the completion of $K$ with respect to $\nu_{\infty}$.
We denote by $\mathcal{O}_{\infty}$ its integer ring, i.e. $\mathcal{O}_{\infty}= \lbrace x \in K_{\infty} | \, \, \nu_{\infty}(x) \geq 0 \rbrace$. 
Let $\mathfrak{t}=\mathfrak{t}(K_{\infty})$ be Bruhat-Tits tree defined in \S~\ref{subsection BT}.
The group $\mathrm{GL}_2(K)$ acts on $\mathfrak{t}$ via simplical maps.
There exists a bipartition on the vertex set of $\mathfrak{t}$ that is preserved by each subgroup $H \subseteq \mathrm{GL}_2(K)$ such that $\mathrm{det}(H) \subseteq K_{\infty}^{2*} \mathcal{O}_{\infty}^{*}$.
The group $H$ acts on $\mathfrak{t}$ without inversions, so the quotient graph $H \backslash \mathfrak{t}$ is well defined.

Let $A$ be the ring of functions of $\mathcal{C}$ that are regular outside $\lbrace P_\infty \rbrace$ and set $\Gamma=\mathrm{GL}_2(A)$.
Since $\det(\Gamma)= \mathbb{F}^{*}$, the quotient graph $\Gamma \backslash \mathfrak{t}$ is well-defined.
This graph has been studied by Serre in order to describe the involved group $\Gamma$.
In the sequel, we call it the \textbf{S}-graph (or Serre's graph).
The following definition, which is taken from \cite[Def. 4.1]{CBHecke}, is very useful for describing the action of $\Gamma$ on $\mathfrak{t}$, and therefore also to describe the \textbf{S}-graph:

\begin{defn}\label{def good quotient}
Let $H$ be a subgroup of $\mathrm{GL}_2(K)$.
We say that $H$ is a \emph{group closing enough umbrellas} (GCEU) if there exists a finite family of rays $ \mathfrak{R}_{H}= \lbrace \mathfrak{r}_i \rbrace_{i=1}^{\gamma} \subset \mathfrak{t}$, each with a vertex set $\lbrace v_{i,n} \rbrace_{n>0}^{\infty}$, where $v_{i,n}$ and $v_{i, n+1}$ are neighbors, satisfying each of the following statements:
\begin{itemize}
    \item[(a)] The set of visual limits $\{\partial_\infty(\mathfrak{r})|
    \mathfrak{r}\in \mathfrak{R}_{H}\}$ is a representative 
    system for $H \backslash \mathbb{P}^1(K)$.
    \item[(b)] $H \backslash \mathfrak{t}$ is obtained by attaching all the images $\overline{\mathfrak{r}_i} \subseteq H \backslash \mathfrak{t}$ to a certain finite graph $Y_{H}$.
    In particular, $\overline{\mathfrak{r}_i} \cap \overline{\mathfrak{r}_j} = \emptyset$, 
    for each $i \neq j$.
    \item[(c)] No $\mathfrak{r}_i$ contains a pair of vertices in the same $H$-orbit.
    \item[(d)] For each $i$ and each $n>0$, we have $ \mathrm{Stab}_H(v_{i,n})\subseteq \mathrm{Stab}_H(v_{i,n+1})$.
    \item[(e)] $\mathrm{Stab}_H(v_{i,n})$ acts transitively on the set of  neighboring vertices, in $\mathfrak{t}$, of $v_{i,n}$, other than $v_{i,n+1}$.
\end{itemize}
A consequence of the preceding definition is that, for any GCEU $H$,
the visual limit $\partial_\infty(H\backslash\mathfrak{t})$ can be identified
with the finite set $H \backslash \mathbb{P}^1(K)$, so the lift of any
ray in the quotient is a ray with a rational visual limit, and any rational
visual limit arises in this fashion.
Note that the notion of ``closing umbrellas'' corresponds to conditions (d) and (e), while (a), (b) and (c) convey the idea of ``closing \textit{enough} umbrellas'', so as to have a ``good'' quotient graph.
\end{defn}

In order to describe a suitable set of rays in the $\mathbf{S}$-graph, we use standard algebraic-geometry notations.
A divisor $D$ on $X$ is a formal sum of the form $D=\sum_{i=1}^r n_i P_i$, where $n_i \in \mathbb{Z}$ and $P_i$ is a closed point of $X$.
The set $\lbrace P_i \rbrace_{i=1}^r$ presented above is called the support of $D$.
The degree of $D$ is, by definition, the integer $\deg(D):=\sum_{i=1}^r n_i \deg(P_i)$, where $\deg(P_i)$ is the degree of the extension $\mathbb{F}[P_i]/ \mathbb{F}$ of the residue field $\mathbb{F}[P_i]$ at $P_i$.
The principal divisor $\mathrm{div}(f)$, for a function $f \in K^*$, is by definition the sum $\mathrm{div}(f):= \sum_{Q \in X} \nu_Q(f) Q$, where $\nu_Q$ is the valuation map defined from the closed point $Q \in X$.
We say that two divisors $D,B$ are linearly equivalent, and we write $D \sim B$, exactly when $D-B=\mathrm{div}(f)$, for some $f \in K^*$.
Since the degree of a principal divisor is zero, we have that $\deg(D)=\deg(B)$ whenever $D \sim B$.
We also write $D \succcurlyeq B$ when $D - B = \sum_{i=1}^r n_i P_i$ with $n_i \geq 0$.
The line bundle $\mathfrak{L}^D$ on $X$ defined from the divisor $D$ is the bundle given, at every open subset $U \subset X$, by
\begin{equation}\label{eq Line bundles}
\mathfrak{L}^B(U)=\lbrace f \in K^* \, |  \, \,  ( \mathrm{div}(f) + B )  \lvert_U \succcurlyeq 0
\rbrace \cup \lbrace 0 \rbrace.  
\end{equation}
These are usually called invertible bundles, and have been extensively
studied in existing literature. They can be seen either as the projective equivalent of ideals, or as a multiplicative
version of divisors, as illustrated by the following properties:
\begin{itemize}
\item  Two divisors $B$ and $D$ are linearly equivalent if and only if $\mathfrak{L}^B$ and $\mathfrak{L}^D$
are isomorphic as line bundles,
\item for any pair $(B,D)$ of divisors, we have $\mathfrak{L}^B\mathfrak{L}^D= \mathfrak{L}^{B+D}$, 
\item we have $\mathfrak{L}^B(U)\subseteq\mathfrak{L}^D(U)$, for all open sets $U$, precisely when
$D \succcurlyeq B$ and
\item $\mathfrak{L}^{\mathrm{div}(g)}=g^{-1}\mathcal{O}_X$.
\end{itemize}
Note that the product $\mathfrak{L}^B\mathfrak{L}^D$ in (2) is defined locally, on open sets $U$, by the relation 
$(\mathfrak{L}^B\mathfrak{L}^D)(U)=\mathfrak{L}^B(U)\mathfrak{L}^D(U)$. With this definition, the bundle
 $\mathfrak{L}^B\mathfrak{L}^D$ is isomorphic to the
tensor product $\mathfrak{L}^B\otimes_{\mathcal{O}_X}\mathfrak{L}^D$.
We define the Picard group $\mathrm{Pic}(X)$ of $X$ as the set of linear equivalence classes of divisors on $X$, with the sum as the composition law. It can be equivalently defined as the set of isomorphism classes of line bundles, with the tensor product as the composition law.
The Picard group of $A$ is, by definition, the quotient $\mathrm{Pic}(X) / \langle \overline{P_\infty} \rangle$ of $\mathrm{Pic}(X)$ by the group generated by the image of $D=P_\infty$ in $\mathrm{Pic}(X)$.
This is isomorphic to the group of linear equivalence classes of divisors on $U_{\infty}=X \smallsetminus \lbrace P_\infty \rbrace$, which are obtained by ignoring the infinite coordinate.
Since $X$ has dimension one, $\mathrm{Pic}(A)$ also coincides with the class group $\mathrm{Cl}(A)$ of the Dedekind domain $A$.
Thus, $\mathrm{Pic}(A)$ is finite, since $\mathbb{F}$ is finite.
Next result follows from \cite[Chapter II, \S 2.1- \S 2.3]{SerreTrees}:

\begin{theoremname}\label{teo serre quot}
The group $\Gamma=\mathrm{GL}_2(A)$ is a GCEU.
In particular, the $\mathbf{S}$-graph $\Gamma \backslash \mathfrak{t}$ is the union of a finite graph $Y$ and a family of rays called cusps.
The set of such cusps can be indexed by the elements
of the Picard group $\mathrm{Pic}(A) = \mathrm{Pic}(X) / \langle \overline{P_\infty} \rangle$, provided redundant cusps are avoided. \qed
\end{theoremname}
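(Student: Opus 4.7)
The plan is to verify the five conditions of Definition~\ref{def good quotient} for the group $\Gamma=\mathrm{GL}_2(A)$; the indexation of the cusps by $\mathrm{Pic}(A)$ then falls out of the analysis of condition (a), and the description of $\Gamma\backslash\mathfrak{t}$ as a finite graph with attached rays follows from (b).

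I would first handle condition (a) by parametrizing $\Gamma\backslash\mathbb{P}^1(K)$. A point $z=[x:y]\in\mathbb{P}^1(K)$ determines the rank-one $A$-module $M_z=Ax+Ay\subseteq K$ (with the convention $M_\infty=A$), and its class in $\mathrm{Pic}(A)$ depends only on the $\Gamma$-orbit of $z$, since replacing the pair $(x,y)$ by $\gamma\cdot(x,y)$ with $\gamma\in\mathrm{GL}_2(A)$ only changes $M_z$ by an $A$-module automorphism of $A^2$. Conversely, any rank-one projective $A$-module embeds in $K$ as a fractional ideal, any two such embeddings differ by a scalar in $K^{*}$, and rescaling can be absorbed into an element of $\Gamma$. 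This yields the bijection $\Gamma\backslash\mathbb{P}^1(K)\simeq\mathrm{Pic}(A)$, and I pick a representative end $a_c\in\mathbb{P}^1(K)$ for each class $c\in\mathrm{Pic}(A)$, taking $a_{1}=\infty$ for the trivial class.

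Next I would establish conditions (c), (d), and (e) on each cusp separately. For every $a_c$ I construct a ray $\mathfrak{r}_c\subset\mathfrak{t}$ with $\partial_\infty(\mathfrak{r}_c)=a_c$, whose vertices $v_{c,n}$ represent rank-two $\mathcal{O}_\infty$-lattices in $K_\infty^{2}$ containing a distinguished line pointing towards $a_c$. Intersecting $\Gamma$ with the parabolic subgroup of $\mathrm{GL}_2(K_\infty)$ preserving that line and the lattice up to scalar gives a stabilizer isomorphic to $\left\{\sbmattrix{a}{b}{0}{d}:a,d\in\mathbb{F}^{*},\ b\in H^{0}\bigl(X,\mathfrak{L}^{E_{c}+nP_\infty}\bigr)\right\}$ for some divisor $E_c$ attached to the class $c$. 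Monotonicity of the Riemann--Roch spaces as $n$ grows gives (d); their strict growth for $n$ large enough gives (c); and Riemann--Roch, once $\deg(E_c+nP_\infty)\geq 2g(X)-1$, makes the reduction map onto the residue field $\mathbb{F}[P_\infty]$ surjective, which translates into the transitive action on side-neighbors demanded by (e). This also fixes the threshold from which the ray can be started.

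The main obstacle will be the global condition (b), namely that after attaching the cusp images $\overline{\mathfrak{r}_c}$ only finitely many orbits are left in $\Gamma\backslash\mathfrak{t}$, and that the $\overline{\mathfrak{r}_c}$ are pairwise disjoint. Disjointness reduces to the injectivity of $c\mapsto a_c$ modulo $\Gamma$, already settled. Finiteness of the complement is the substance of Serre's reduction-theoretic argument in \cite[Ch.~II, \S2.3]{SerreTrees}: every vertex of $\mathfrak{t}$ admits an essentially unique decomposition into a rank-one $A$-piece and a complementary piece, the Picard class of the rank-one piece selects one of the cusps, and vertices sufficiently deep in that cusp already lie in the orbit of some $v_{c,n}$. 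The remaining orbits form a finite set which is, by definition, the center $Y$, which completes the verification and provides the stated decomposition.
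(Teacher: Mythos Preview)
The paper does not give a proof of this statement; it simply cites \cite[Ch.~II, \S2.1--\S2.3]{SerreTrees} and marks the theorem with a \qed. Your sketch is a faithful outline of Serre's argument from that reference --- the parametrization of $\Gamma\backslash\mathbb{P}^1(K)$ by $\mathrm{Pic}(A)$ via fractional ideals, the Riemann--Roch computation of the cuspidal stabilizers (compare the paper's own Lemmas~\ref{lemma stab in gamma} and~\ref{lemma strict standard ray} for the standard cusp), and the reduction-theoretic finiteness of the complement --- so you are supplying exactly what the paper leaves to the citation, and the outline is correct.
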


Since $\mathfrak{t}$ has a $\Gamma$-invariant bipartition, we have that the $\mathbf{S}$-graph is also bipartite.
Let $\tilde{\mathfrak{r}}_{\infty}$ be the ray with vertices
$\tilde{v}_j=w_0^{[-j]}$ with $j \in \mathbb{Z}_{\geq 0}$.
Next result follows from \cite[Ex. 6, \S 2.1, Ch. II]{SerreTrees}:

\begin{lem}\label{lemma stab in gamma}
We have $\mathrm{Stab}_{\Gamma}(\tilde{v}_0)=\mathrm{GL}_2(\mathbb{F})$, while, for $i>0$, we have:
$$\mathrm{Stab}_{\Gamma}(\tilde{v}_i)= \left \lbrace  \sbmattrix{\alpha}{c}{0}{\beta} \,  \bigg|
\begin{array}{lr}
      \alpha, \beta \in \mathbb{F}^{*}, \\
        c \in A[i] 
    \end{array}
\right \rbrace, $$
where $A[i]=\lbrace x \in A \,  | \, \,  \nu_{\infty}(x) \geq -i \rbrace$.
\end{lem}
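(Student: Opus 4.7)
The plan is to translate the claim into lattice language, compute the stabiliser in $\mathrm{GL}_2(K_\infty)$ of an ambient lattice class, and then cut down by $\Gamma=\mathrm{GL}_2(A)$ using two elementary facts about the function field $K$.

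First I would identify the vertex $\tilde{v}_i=w_0^{[-i]}$ with the homothety class of the $\mathcal{O}_\infty$-lattice $L_i:=\pi^{-i}\mathcal{O}_\infty e_1\oplus\mathcal{O}_\infty e_2$ in $K_\infty^2$. Indeed, $L_0=\mathcal{O}_\infty^{2}$ is the class corresponding to $\tilde v_0$ (i.e., to the ball $B_0^{[0]}=\mathcal{O}_\infty$), and the chain $L_0\subsetneq L_1\subsetneq L_2\subsetneq\cdots$, each inclusion of index $q$, realises the ascending ray $\tilde{\mathfrak r}_\infty$ toward the visual limit $\infty$. Since $L_i=D_i L_0$ for $D_i:=\sbmattrix{\pi^{-i}}{0}{0}{1}$, the stabiliser of the lattice $L_i$ in $\mathrm{GL}_2(K_\infty)$ is the conjugate $D_i\,\mathrm{GL}_2(\mathcal{O}_\infty)\,D_i^{-1}$, which a short matrix computation identifies with the set of matrices $\sbmattrix{a}{b}{c}{d}$ such that $a,d\in\mathcal{O}_\infty$, $b\in\pi^{-i}\mathcal{O}_\infty$, $c\in\pi^{i}\mathcal{O}_\infty$, and $ad-bc\in\mathcal{O}_\infty^{*}$. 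The stabiliser of the \emph{vertex} $\tilde v_i$ in $\mathrm{GL}_2(K_\infty)$ is then this subgroup multiplied by the central $K_\infty^{*}$.

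Second, I would intersect with $\Gamma$. A preliminary input is that $A^{*}=\mathbb{F}^{*}$: any $f\in A^{*}$ has zeros and poles only at $P_\infty$, yet $\deg\mathrm{div}(f)=0$ forces $\nu_\infty(f)=0$, whence $f$ is regular and non-vanishing on all of $X$, hence a constant in $\mathbb{F}^{*}$. Therefore, for $\mathbf g\in\Gamma$ stabilising $\tilde v_i$, writing $\mathbf g=\lambda\mathbf h$ with $\mathbf h$ in the lattice stabiliser yields $0=\nu_\infty(\det\mathbf g)=2\nu_\infty(\lambda)+\nu_\infty(\det\mathbf h)$ with $\det\mathbf h\in\mathcal{O}_\infty^{*}$, so $\lambda\in\mathcal{O}_\infty^{*}$ and, absorbing $\lambda$, we may assume $\lambda=1$. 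The entries of $\mathbf g=\sbmattrix{\alpha}{c}{\delta}{\beta}$ must then satisfy $\alpha,\beta\in A\cap\mathcal{O}_\infty$, $c\in A\cap\pi^{-i}\mathcal{O}_\infty=A[i]$, and $\delta\in A\cap\pi^{i}\mathcal{O}_\infty$.

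Finally, the same projective-curve argument shows $A\cap\mathcal{O}_\infty=\mathbb{F}$, so $\alpha,\beta\in\mathbb{F}$; the determinant condition $\alpha\beta-c\delta\in\mathbb{F}^{*}$ then forces $\alpha,\beta\in\mathbb{F}^{*}$. For $i=0$ the constraint on $\delta$ becomes $\delta\in\mathbb{F}$, and one recovers $\mathrm{Stab}_\Gamma(\tilde v_0)=\mathrm{GL}_2(\mathbb{F})$. For $i>0$ the element $\delta\in A$ with $\nu_\infty(\delta)\geq i>0$ is a constant vanishing at $P_\infty$, hence $\delta=0$, giving the stated upper-triangular form; the reverse inclusion is a direct check that any such matrix preserves $L_i$. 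The only non-routine step is the preliminary bookkeeping identifying $\tilde v_i$ with $L_i$, since the paper phrases vertices via balls rather than lattices; once this is fixed, the rest is linear algebra over $\mathcal{O}_\infty$ combined with the global regularity argument on $X$.
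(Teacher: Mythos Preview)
Your argument is correct and is precisely the standard lattice computation underlying the reference the paper invokes; the paper itself does not give a proof but simply cites \cite[Ch.~II, \S2.1, Ex.~6]{SerreTrees}. One cosmetic slip: the index $[L_i:L_{i-1}]$ is $q^{\deg(P_\infty)}$, not $q$, but this plays no role in the argument.
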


In order to describe the image $\mathfrak{r}_{\infty}$ of $\tilde{\mathfrak{r}}_{\infty}$ in the $\mathbf{S}$-graph $\Gamma \backslash \mathfrak{t}$, we introduce a sheaf theoretical interpretation of the 
Bruhat-Tits tree. It is only used in this section.
See \cite[\S 2]{ArenasBravo} or \cite[\S 5]{CBHecke} for more details.
Recall that an $\mathcal{O}_X$-module is
a sheaf of abelian groups such that $\Lambda(U)$ is a 
$\mathcal{O}_X(U)$-module, for all open set $U \subseteq X$,
in a way that scalar multiplication is coherent with
restriction maps.
An $\mathcal{O}_X$-lattice $\Lambda$ on a $K$-vector space $V$ is an
$\mathcal{O}_X$-module  whose generic fiber is $V$.
An $\mathcal{O}_X$-order $\mathfrak{D}$ on an algebra 
$\mathbf{A}$ is an $\mathcal{O}_X$-lattice in $\mathbf{A}$
that is also a sheaf of rings.
An order $\mathfrak{D}$ is maximal if $\mathfrak{D}(U)$ is a maximal order, set-theoretically, for every open 
set $U$. The maximal orders on $\mathbb{M}_n(K)$ are precisely the
sheaves of endomorphisms of the lattices on $K^n$. More precisely,
every maximal order has the form $\mathfrak{D}=
\mathfrak{End}_{\mathcal{O}_X}(\Lambda)$, where
$$\big(\mathfrak{End}_{\mathcal{O}_X}(\Lambda)\big)(U)
= \mathrm{End}_{\mathcal{O}_X(U)}\big(\Lambda(U)\big),$$
for every open set $U$. If we set 
$U_{\infty}=X\smallsetminus\{P_\infty\}$, as in \S~\ref{section intro}, 
then the vertices of the Bruhat-Tits tree at $P_\infty$ are in 
correspondence with the set $\Xi$ of maximal orders $\mathfrak{D}$ 
satisfying $\mathfrak{D}(U_{\infty})=\mathbb{M}_2(A)$. 
The conjugacy classes of maximal 
orders with a representative in $\Xi$ are, therefore, in correspondence
with the orbits of vertices in the tree under the normalizer 
$\mathcal{N}\subseteq\mathrm{GL}_2(K)$ of $\mathbb{M}_2(A)$.
The quotient graph $\mathcal{N}\backslash\mathfrak{t}$
is called the classifying graph, or $\mathbf{C}$-graph, for this reason. In general, the group $\mathcal{N}$ fails
to preserve the bipartition, so this quotient
graph must be defined in terms of the baricentric
subdivision. We skip the details of this construction 
here, but we note that the $\mathbf{C}$-graph might include
loops or half-edges. The half edges can be interpreted
as ``folded edges'' and are associated to the existence 
of inversions.
The group $\mathcal{N}$ contains $\Gamma$ as a normal subgroup of finite index.
This implies the existence of a finite
ramified covering from the $\mathbf{S}$-graph to the 
$\mathbf{C}$-graph.
The maximal $X$-order defined by the divisor $B$ on $X$, is the sheaf:
$$\mathfrak{D}_B=\sbmattrix{\mathcal{O}_X}{\mathfrak{L}^B}{\mathfrak{L}^{-B}}{\mathcal{O}_X}
=\mathfrak{End}_{\mathcal{O}_X}\textnormal{\scriptsize$\left(
\begin{array}{c}  \mathfrak{L}^B\\ \mathcal{O}_X\end{array}\right)$\normalsize},$$
This is a locally free $\mathcal{O}_X$-module whose generic fiber is $\mathbb{M}_2(K)$.
It corresponds to a vertex in the tree precisely when $B$ is
supported at $P_\infty$.
It follows from \cite[Prop. 5.3]{ArenasBravo} that the 
corresponding vertices belong to the same $\mathcal{N}$-orbit 
exactly when the corresponding orders are conjugates.

\begin{lem}\label{lemma standard ray}
The graph $\mathfrak{r}_{\infty}$ is a ray.
\end{lem}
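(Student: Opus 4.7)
The plan is to exploit the sheaf-theoretic description of the Bruhat-Tits tree recalled in the preceding paragraphs. Each vertex $\tilde{v}_j$ of $\tilde{\mathfrak{r}}_{\infty}$ will be identified with a maximal $X$-order of the form $\mathfrak{D}_{jP_{\infty}}$, and the fact that the divisors $jP_{\infty}$ for $j\in\mathbb{Z}_{\geq 0}$ are pairwise non-equivalent in $\mathrm{Pic}(X)$ (as they have distinct degrees $j\deg(P_{\infty})$) will ensure that distinct $\tilde{v}_j$ lie in distinct $\mathcal{N}$-orbits, and a fortiori in distinct $\Gamma$-orbits.

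For the identification, I would observe that $\tilde{v}_j=w_0^{[-j]}$ corresponds to the ball $B_0^{[-j]}$, which under the dictionary between balls and $\mathcal{O}_{\infty}$-lattices in $K_{\infty}^2$ is represented by the homothety class of $\mathcal{O}_{\infty}\oplus\pi^{j}\mathcal{O}_{\infty}$. Extending this local lattice to the global bundle $\mathcal{O}_X\oplus\mathfrak{L}^{-jP_{\infty}}$ (trivialized on $U_{\infty}$ in the standard way) and computing the endomorphism sheaf using $\mathfrak{Hom}(\mathfrak{L}^{-jP_{\infty}},\mathcal{O}_X)=\mathfrak{L}^{jP_{\infty}}$, one recognises (up to conjugation by the swap matrix, which lies in $\mathcal{N}$) precisely the order $\mathfrak{D}_{jP_{\infty}}$ of the excerpt.

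For the injectivity of $j\mapsto\bar{v}_j$, suppose that $\gamma\in\Gamma$ satisfies $\gamma\tilde{v}_j=\tilde{v}_k$. Then $\mathfrak{D}_{jP_{\infty}}$ and $\mathfrak{D}_{kP_{\infty}}$ are $\mathrm{GL}_2(K)$-conjugate, so Morita equivalence yields a line bundle $M$ and an isomorphism $\mathfrak{L}^{jP_{\infty}}\oplus\mathcal{O}_X\cong M\oplus (M\otimes\mathfrak{L}^{kP_{\infty}})$. By the Krull--Schmidt theorem for vector bundles on a smooth projective curve (line bundles are indecomposable), the summands must match as an unordered pair: either $M\cong\mathcal{O}_X$, whence $\mathfrak{L}^{jP_{\infty}}\cong\mathfrak{L}^{kP_{\infty}}$ and the degree criterion gives $j=k$; or $M\cong\mathfrak{L}^{jP_{\infty}}$ together with $\mathfrak{L}^{(j+k)P_{\infty}}\cong\mathcal{O}_X$, so that $(j+k)\deg(P_{\infty})=0$ and, since $j,k\geq 0$, again $j=k=0$.

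Finally, the simplicial projection $\mathfrak{t}\to\Gamma\backslash\mathfrak{t}$ sends $\tilde{\mathfrak{r}}_{\infty}$ onto the subgraph $\mathfrak{r}_{\infty}$ whose vertex set $\{\bar{v}_j\}_{j\geq 0}$ is in bijection with $\mathbb{Z}_{\geq 0}$ (by the previous step) and whose edges are exactly the images of the upward and downward edges between consecutive $\tilde{v}_j$; these edges must themselves be distinct because their endpoints are. Consequently $\mathfrak{r}_{\infty}$ is graph-isomorphic to $\tilde{\mathfrak{r}}_{\infty}$ and is therefore a ray. I expect the main obstacle to be the identification step, namely matching the local lattice at $P_{\infty}$ with the correct global bundle and its associated order; once this bookkeeping is in place, the Krull--Schmidt matching combined with the degree argument in $\mathrm{Pic}(X)$ is routine.
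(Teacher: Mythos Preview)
Your argument is correct. The overall route coincides with the paper's: both identify $\tilde{v}_j$ with the maximal $X$-order $\mathfrak{D}_{jP_\infty}$, pass from ``same $\Gamma$-orbit'' to ``conjugate orders'', and conclude via a degree computation in $\mathrm{Pic}(X)$. The execution differs in two places. First, the paper separates off the case $j=0$ by a group-theoretic observation: $\mathrm{Stab}_\Gamma(\tilde{v}_0)=\mathrm{GL}_2(\mathbb{F})$ cannot be isomorphic to the upper-triangular stabilizer of $\tilde{v}_i$ for $i>0$. Second, for $i,j>0$ the paper does not invoke Krull--Schmidt directly but cites an external criterion (\cite[Prop.~4.1]{ArenasQuotient} or \cite[Lemma~6.3]{ArenasBravo}), valid under the hypothesis $\mathfrak{L}^{-iP_\infty}(X)=\mathfrak{L}^{-jP_\infty}(X)=0$, to deduce $iP_\infty\sim jP_\infty$ or $iP_\infty\sim -jP_\infty$. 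Your use of Krull--Schmidt for vector bundles on a smooth projective curve handles all $j\geq 0$ uniformly and avoids both the stabilizer case split and the external citation; the paper's version, in exchange, keeps the bundle-theoretic step black-boxed and makes the $j=0$ case completely elementary.
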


\begin{proof}
Since the action of $\Gamma$ on $\mathfrak{t}$ is simplicial, we just have to prove that two different vertices $\tilde{v}_i$ and $\tilde{v}_j$ do not belong to the same $\Gamma$-orbit.  
Firstly, if $\tilde{v}_0= \mathbf{g} \cdot \tilde{v}_i$, with $\mathbf{g} \in \Gamma$ and $i>0$, we have that $\mathrm{Stab}_{\Gamma}(\tilde{v}_0) = \mathbf{g} \mathrm{Stab}_{\Gamma}(\tilde{v}_i) \mathbf{g}^{-1}$.
In particular, these group stabilizers are isomorphic, which is absurd according to Lemma~\ref{lemma stab in gamma} since $\mathrm{Stab}_{\Gamma}(\tilde{v}_i)$ is not simple.

Now, let $i,j>0$ be two different integers and let $B = i \cdot P_\infty$ 
and $D= j \cdot P_\infty$ be two divisors on $X$.
Then, as noted above, the vertices  $\tilde{v}_i$ and 
$\tilde{v}_j$ belongs to the same $\mathcal{N}$-orbit exactly when $\mathfrak{D}_B$ and $\mathfrak{D}_D$ are conjugates.
This holds in particular if they are in the same $\Gamma$-orbit.
Since the degree of a principal divisor $\mathrm{div}(f)$ is zero, we conclude from Eq.~\eqref{eq Line bundles} that $\mathfrak{L}^{-B}(X)=\mathfrak{L}^{-D}(X)=\lbrace 0 \rbrace$.
Then, it follows from \cite[Prop. 4.1]{ArenasQuotient}, or alternatively from \cite[Lemma 6.3]{ArenasBravo}, that $B$ is linearly equivalent to $D$ or $-D$.
This is absurd, since the degree of both $B-D=(i-j) \cdot P_\infty$ and $B+D= (i+j) \cdot P_\infty$ is non-zero.
The result follows.
\end{proof}

In the sequel, we refer to $\mathfrak{r}_{\infty}$ as the standard ray of the \textbf{S}-graph.
The image of $\tilde{v}_j$ in $\mathfrak{r}_{\infty}$ is denoted by $\V_j$, for all $j\in \mathbb{Z}_{\geq 0}$.
The sub-ray $\mathfrak{r}_{\infty}^{s} \subseteq \mathfrak{r}_{\infty}$ containing precisely those of vertices $\V_j$ with $j>N$ is called the strict standard ray of the \textbf{S}-graph. It plays a significant role in the sequel.

\begin{lem}\label{lemma strict standard ray}
For each $j >N$, the subgroup $\mathrm{Stab}_{\Gamma}(\tilde{v}_j)$ acts transitively on the set $\mathfrak{v}_j$ of neighboring vertices
$\tilde{v}\neq\tilde{v}_{j+1}$ of $\tilde{v}_j$,
in $\mathfrak{t}$.   
\end{lem}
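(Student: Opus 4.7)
The plan is to translate the transitivity statement into the surjectivity of a linear map between Riemann-Roch spaces. First I would identify the set $\mathfrak{v}_j$ with the residue field $\mathbb{F}[P_\infty]$. By the description of the Bruhat-Tits tree in \S~\ref{subsection BT}, the neighbors of $\tilde{v}_j=w_0^{[-j]}$ other than $\tilde{v}_{j+1}=w_0^{[-j-1]}$ are the vertices $w_{s\pi^{-j}}^{[-j+1]}$ corresponding to the sub-balls of radius $|\pi|^{-j+1}$ of $B_0^{[-j]}$, and these are parametrized bijectively by $\mathcal{O}_\infty/\pi\mathcal{O}_\infty\cong\mathbb{F}[P_\infty]$ via $\overline{s}\mapsto w_{s\pi^{-j}}^{[-j+1]}$.

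Next I would compute the stabilizer action on this parametrization. By Lemma~\ref{lemma stab in gamma}, every element of $\mathrm{Stab}_\Gamma(\tilde{v}_j)$ for $j>0$ has the form $\mathbf{g}=\sbmattrix{\alpha}{c}{0}{\beta}$ with $\alpha,\beta\in\mathbb{F}^*$ and $c\in A[j]$. The associated Möbius transformation $z\mapsto(\alpha z+c)/\beta$ fixes $\infty$ and is an $\mathbb{F}$-affine isometry of $K_\infty$, since its derivative $\alpha\beta^{-1}$ has absolute value one. It therefore sends the sub-ball with parameter $\overline{s}\in\mathbb{F}[P_\infty]$ to the sub-ball with parameter $\beta^{-1}\bigl(\alpha\overline{s}+\overline{c\pi^j}\bigr)$, where $\overline{c\pi^j}\in\mathbb{F}[P_\infty]$ denotes the reduction of $c\pi^j\in\mathcal{O}_\infty$ modulo $\pi\mathcal{O}_\infty$. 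Specializing to $\alpha=\beta=1$ reduces the transitivity statement to the surjectivity of the $\mathbb{F}$-linear ``leading coefficient'' map
\[
\phi_j:A[j]\to\mathbb{F}[P_\infty],\qquad c\mapsto\overline{c\pi^j}.
\]

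The main step, where the hypothesis $j>N$ is used, is an application of the Riemann-Roch theorem. Identifying $A[j]$ with the Riemann-Roch space $L(jP_\infty)$ of global sections of $\mathfrak{L}^{jP_\infty}$, the kernel of $\phi_j$ is precisely $L((j-1)P_\infty)$. For any integer $i\geq N$ we have $i\deg(P_\infty)\geq 2g(X)-1$, so $\dim_{\mathbb{F}}L(iP_\infty)=i\deg(P_\infty)+1-g(X)$. Applying this to both $i=j$ and $i=j-1$ — which requires precisely $j>N$, not merely $j\geq N$ — yields $\dim_{\mathbb{F}}\mathrm{im}(\phi_j)=\deg(P_\infty)=\dim_{\mathbb{F}}\mathbb{F}[P_\infty]$, so $\phi_j$ is surjective. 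The main subtle point is exactly this use of Riemann-Roch: it makes transparent why $j>N$ is the correct threshold, since it is precisely what allows the dimension formula to hold simultaneously at levels $j$ and $j-1$, which is what guarantees the required dimension jump of $\deg(P_\infty)$.
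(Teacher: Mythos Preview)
Your proof is correct and follows essentially the same approach as the paper's own proof. Both arguments reduce the transitivity to the surjectivity of the ``leading coefficient'' map $A[j]\to\mathbb{F}[P_\infty]$ and then invoke Riemann--Roch to see that $\dim_{\mathbb{F}}A[j]-\dim_{\mathbb{F}}A[j-1]=\deg(P_\infty)$ when $j>N$; the paper phrases this via the surjection $U_j\twoheadrightarrow\Delta_j/\Delta_{j-1}$ of unipotent groups, while you compute the affine action on the residue-field parametrization directly, but the content is identical.
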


\begin{proof}
For each $i \in \mathbb{Z}_{\geq 0}$, let us introduce the groups
$$\Delta_i:= \left\lbrace \sbmattrix {1}{z}{0}{1} \bigg| z \in \pi^{-i}\mathcal{O}_{P_{\infty}} \right \rbrace \quad\text{ and }\quad U_i:= \left \lbrace  \sbmattrix{1}{c}{0}{1} \bigg| \, c \in A[i]
\right \rbrace.$$
Since $A[i]= A \cap \pi^{-i} \mathcal{O}_{P_{\infty}}$, we have 
that $U_i \subset \Delta_i$. Note that $\Delta_i$ fixes $\tilde{v}_i$,
but moves every neighbor apart from $\tilde{v}_{j+1}$. Likewise,
$\Delta_{i-1}$ is the $\Delta_i$-stabilizer of every vertex in $\mathfrak{v}_i$. 
In particular, $\Delta_{j}/\Delta_{j-1}$ is a group
acting freely on a set of the same size, whence this action 
is transitive.
Note that $U_{j}$ projects surjectively $\Delta_{j}/\Delta_{j-1}$, since $A[i]/ A[i-1] = \pi^{-i}\mathcal{O}_{P_{\infty}} / \pi^{-i+1}\mathcal{O}_{P_{\infty}} \cong \mathbb{F}[P_{\infty}]$ according to Riemann-Roch Theorem (See \cite[Lemma 1.2]{Mason}).
Since $U_i \subseteq\mathrm{Stab}_{\Gamma}(\tilde{v}_i)$, the result follows.
\end{proof}

\begin{ex}\label{ex S and C graph in degree two}
It follows from \cite[Ch. II, \S 2.4.2]{SerreTrees} that when $X=\mathbb{P}^1_{\mathbb{F}}$, and $P_\infty$ is a point of
degree $2$, the $\mathbf{S}$-graph is a maximal path, as shown in
Figure~\ref{Figure degree two}(A). In this case the quotient group
$\mathcal{N}/\Gamma$ has order two, and act by sending the vertex
$\V_i$ to $\V_{i-1}^*$. Each vertex $\mathbf{w}_i$ is the image of 
both $\mathbf{v}_i$ and $\mathbf{v}_{i-1}^{*}$. For details 
of this computation see \cite[Example 5.8]{ArenasQuotient}. 
\end{ex}

\begin{figure}
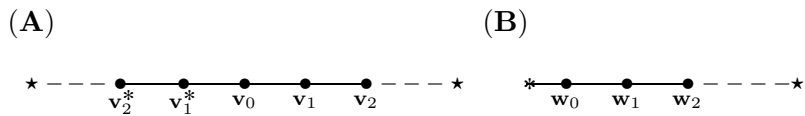

\[
\xygraph{!{<0cm,0cm>;<0.8cm,0cm>:<0cm,0.8cm>::}
!{(-1.5,1) }*+{\mathbf{(A)}}="B"
!{(-1.5,0) }*+{\star}="vl1"
!{(5.5,0) }*+{\star}="vl2"
!{(-.2,0) }*+{}="v0"
!{(-0.04,-0) }*+{\bullet}="v1"
!{(1,0) }*+{\bullet}="v2"
!{(2,0) }*+{\bullet}="v3"
!{(3,0) }*+{\bullet}="v4"
!{(4,0) }*+{\bullet}="v5"
!{(4.2,0) }*+{}="v5f"
!{(0,-0.3) }*+{{}_{\V_2^*}}="v1i"
!{(1,-0.3) }*+{{}_{\V_1^*}}="v2i"
!{(2,-0.3) }*+{{}_{\V_0}}="v3i"
!{(3,-0.3) }*+{{}_{\V_1}}="v4i"
!{(4,-0.3) }*+{{}_{\V_2}}="v5i"
!{(5.2,0) }*+{}="v7"
"vl1"-@{--}"v1" "v0"-"v5f" "vl2"-@{--}"v5" 
}
\xygraph{!{<0cm,0cm>;<0.8cm,0cm>:<0cm,0.8cm>::}
!{(1,1) }*+{\mathbf{(B)}}="B"
!{(1.4,0) }*+{*}="v1/2"
!{(2,0) }*+{\bullet}="v3"
!{(3,0) }*+{\bullet}="v4"
!{(4,0) }*+{\bullet}="v5"
!{(5.8,0) }*+{\star}="v7"
!{(1.6,0) }*+{}="v3f"
!{(2.2,0) }*+{}="v3ff"
!{(4.2,0) }*+{}="v5f"
!{(1.2,0) }*+{}="v1/2f"
!{(2,-0.3) }*+{{}_{\mathbf{w}_0}}="v3i"
!{(3,-0.3) }*+{{}_{\mathbf{w}_1}}="v4i"
!{(4,-0.3) }*+{{}_{\mathbf{w}_2}}="v5i"
!{(5.8,0) }*+{}="v7f"
"v3f"-@{-}"v5f" "v5"-@{--}"v7f" "v3ff"-@{-}"v1/2f"
}
\]
\caption{
Figure~(A) shows the $\mathbf{S}$-graph when 
$\mathbb{P}^1_{\mathbb{F}}$ and $\deg(P_{\infty})=2$.
Figure~(B) is the corresponding classifying graph, which has a semi-edge joined to $\mathbf{w}_0$.
}\label{Figure degree two}
\end{figure}

\section{The walk of a local integer}\label{section walk}

Every descending ray $\mathfrak{r}$ starting from $\tilde{v}_0=w_0^{[0]}$ has a 
visual limit $\alpha(\mathfrak{r})\in\mathcal{O}_\infty$. Conversely,
every element in the ring $\mathcal{O}_\infty$ can be seen as the visual limit of
a unique such ray $\mathfrak{r}_\alpha$. The vertices in the ray
$\mathfrak{r}_\alpha$ are $\tilde{v}_0(\alpha),
\tilde{v}_1(\alpha),\tilde{v}_2(\alpha),\dots$, 
where $\tilde{v}_i(\alpha)=w_{\alpha}^{[i]}$. In other words, 
$\tilde{v}_{i+1}(\alpha)$ corresponds to a proper maximal sub-ball of $B_\alpha^{[i]}$, and therefore a
neighbor of $\tilde{v}_i(\alpha)$ in the Bruhat-Tits tree.
We denote by
$v_i(\alpha)$ the image of $\tilde{v}_i(\alpha)$ under the 
canonical projection $\phi:\mathfrak{t}\rightarrow \Gamma\backslash\mathfrak{t}$.
In particular, the vertex $v_{i+1}(\alpha)$ is always a neighbor of $v_i(\alpha)$.
We refer to the sequence $v_0(\alpha)v_1(\alpha)v_2(\alpha)\dots$
as the walk of $\alpha$.

\begin{defn}\label{nmul}
For any pair $(v,w)$ of vertices in $\Gamma\backslash\mathfrak{t}$,
we let $m_{v,w}$ be the number of pre-images of $w$ that are neighbors of a given
pre-image $\tilde{v}$ of $v$. Note that this number is non-zero 
precisely when $v$ and $w$ are neighbors. It is also independent on the 
choice of the  pre-image $\tilde{v}$.   
We call it the neighbor multiplicity.
\end{defn}

 The explicit description
of $\Gamma\backslash\mathfrak{t}$ in Theorem~S~\ref{teo serre quot} has the
following consequence that we use throughout:

\begin{lem}\label{L61}
   The graph $\Gamma\backslash\mathfrak{t}$ is the union of a finite graph,
   which we call the center in the sequel, and a finite number of 
   non-equivalent rays
   $\mathfrak{c}_0,\dots,\mathfrak{c}_m$ called cusps. Each cusp $\mathfrak{c}$
   has a vertex set $v_0,v_1,v_2,\dots$, satisfying the following conditions:
   \begin{itemize}
\item[(a)] The only neighbors of $v_i$, for $i\geq1$, are 
       $v_{i-1}$ and $v_{i+1}$.
\item[(b)] For all $i\geq1$, we have $m_{v_i,v_{i+1}}=1$ and 
       $m_{v_i,v_{i-1}}=q^{\deg(P_\infty)}$.
   \end{itemize}
   The vertex $v_0$, which is called the initial vertex and belong to 
   the center, is allowed to have additional neighbors. \qed
\end{lem}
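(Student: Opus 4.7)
The plan is to invoke Theorem~S~\ref{teo serre quot}, which asserts that $\Gamma=\mathrm{GL}_2(A)$ is a GCEU, and to translate the conditions of Definition~\ref{def good quotient} directly into the structural statement of the lemma.

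First, I would identify the center and the cusps. Condition~(b) of Definition~\ref{def good quotient} provides a finite graph $Y_\Gamma$ to which pairwise-disjoint ray-images $\overline{\mathfrak{r}_0},\dots,\overline{\mathfrak{r}_m}$ are attached; I take $Y_\Gamma$ as the center and set $\mathfrak{c}_j:=\overline{\mathfrak{r}_j}$. The total number of cusps is finite, being indexed by the Picard group $\mathrm{Pic}(A)$ via Theorem~S~\ref{teo serre quot}, which is finite because $\mathbb{F}$ is finite. Two distinct cusps are non-equivalent as rays in $\Gamma\backslash\mathfrak{t}$, since disjoint rays cannot share a common tail; and each cusp is a genuine ray because condition~(c) of Definition~\ref{def good quotient} forbids two vertices of $\mathfrak{r}_j$ from lying in the same $\Gamma$-orbit, making the projection restricted to $\mathfrak{r}_j$ injective.

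Next, I would verify conditions~(a) and~(b) at a fixed cusp $\mathfrak{c}_j$ whose vertices I label $v_0,v_1,v_2,\dots$, with $v_0$ being the attachment point to the center. For $i\geq 1$, lift $v_i$ to $\tilde v$ along $\mathfrak{r}_j$ and lift $v_{i+1}$ to its successor $\tilde v'$ along $\mathfrak{r}_j$. Every vertex of $\mathfrak{t}$ has valency $q^{\deg(P_\infty)}+1$, as recalled in~\S\ref{subsection BT}, so $\tilde v$ has exactly $q^{\deg(P_\infty)}$ neighbors in $\mathfrak{t}$ other than $\tilde v'$. By condition~(e) of Definition~\ref{def good quotient}, the group $\mathrm{Stab}_\Gamma(\tilde v)$ permutes these $q^{\deg(P_\infty)}$ vertices transitively. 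One of them is the predecessor of $\tilde v$ along $\mathfrak{r}_j$, which is a lift of $v_{i-1}$, so the whole orbit projects to $v_{i-1}$. Consequently the image of the neighbor set of $\tilde v$ in $\Gamma\backslash\mathfrak{t}$ is exactly $\{v_{i-1},v_{i+1}\}$, giving~(a); and counting pre-images yields $m_{v_i,v_{i+1}}=1$ and $m_{v_i,v_{i-1}}=q^{\deg(P_\infty)}$, giving~(b).

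I expect the main obstacle to be the bookkeeping at the boundary $i=1$, where $v_{i-1}=v_0$ lies in the center rather than in the interior of the cusp: one must check that the $q^{\deg(P_\infty)}$ ``other'' neighbors of $\tilde v$ still project onto $v_0$. This is fine, because the predecessor of $\tilde v$ along $\mathfrak{r}_j$ is always a neighbor of $\tilde v$ in $\mathfrak{t}$ that projects to $v_0$, and condition~(e) of Definition~\ref{def good quotient} then forces all remaining neighbors into its $\mathrm{Stab}_\Gamma(\tilde v)$-orbit. Once this boundary case is handled, the rest of the lemma is a direct transcription of the GCEU conditions, with no additional combinatorics required.
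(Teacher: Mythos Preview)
Your proposal is correct and is exactly the approach the paper intends: the lemma is stated with a \qed and introduced as a direct consequence of Theorem~S~\ref{teo serre quot}, so the paper gives no separate proof beyond that citation. Your expansion via the GCEU conditions of Definition~\ref{def good quotient} is precisely the unpacking the paper leaves implicit.
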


\begin{lem}\label{L62}
Two cusps, as defined above, can intersect in the initial vertex at most.
\end{lem}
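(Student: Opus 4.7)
The plan is to argue by contradiction. Suppose that two cusps $\mathfrak{c}$ and $\mathfrak{c}'$, with respective vertex sequences $v_0,v_1,v_2,\dots$ and $v'_0,v'_1,v'_2,\dots$, share a vertex $v_i=v'_j$ with $(i,j)\neq(0,0)$. I would first dispose of the mixed case in which exactly one of the indices is zero: if $i\geq 1$ and $j=0$, then by Lemma~\ref{L61} the vertex $v_i$ lies strictly inside the cusp $\mathfrak{c}$ and therefore does not belong to the center, whereas $v'_0$ is an initial vertex of $\mathfrak{c}'$ and so \emph{does} belong to the center; this is a direct contradiction. Hence we may assume $i\geq 1$ and $j\geq 1$.

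Next I would exploit the local structure at the common vertex $v=v_i=v'_j$. By part (a) of Lemma~\ref{L61}, $v$ has exactly two neighbors in $\Gamma\backslash\mathfrak{t}$, so the unordered pairs of neighbors read off from each cusp must agree:
\[
\{v_{i-1},v_{i+1}\}=\{v'_{j-1},v'_{j+1}\}.
\]
Part (b) of the same lemma records the two neighbor multiplicities at $v$ as $q^{\deg(P_\infty)}$ in the direction of the center and $1$ toward the end of the cusp, and these multiplicities are intrinsic invariants of the pair of quotient vertices (independently of which cusp one uses to describe $v$). Since $q^{\deg(P_\infty)}\geq 2>1$, the two neighbors of $v$ are distinguished by their multiplicity, and the equality of pairs refines to $v_{i-1}=v'_{j-1}$ and $v_{i+1}=v'_{j+1}$.

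From here the same multiplicity argument, applied inductively at each successive shared vertex, yields $v_{i+k}=v'_{j+k}$ for every $k\geq 0$. Thus the tail of $\mathfrak{c}$ starting at $v_i$ coincides with the tail of $\mathfrak{c}'$ starting at $v'_j$, so $\mathfrak{c}\sim\mathfrak{c}'$ in the sense of Section~\ref{section graphs}, contradicting the non-equivalence clause in Lemma~\ref{L61}. The main obstacle, and the only genuinely delicate step, is the refinement of the equality of unordered neighbor pairs to the individual identifications $v_{i\pm 1}=v'_{j\pm 1}$; this is precisely where the strict inequality $q^{\deg(P_\infty)}>1$ furnished by condition (b) is indispensable, and it is the mechanism that prevents the ``center'' and ``cusp'' directions from being swapped across the two descriptions of~$v$.
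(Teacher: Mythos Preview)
Your argument is correct and takes a genuinely different route from the paper's. The paper first uses condition~(a) and non-equivalence to show that any non-trivial overlap must be ``reversed'', of the form $v_t=v'_0,\,v_{t-1}=v'_1,\dots,v_0=v'_t$; this forces every vertex of $\mathfrak{c}\cup\mathfrak{c}'$ to have exactly two neighbors, so $\Gamma\backslash\mathfrak{t}$ is a maximal path in which, by~(b), every vertex has neighbor-orbits of sizes $1$ and $q^{\deg(P_\infty)}$; the contradiction then comes from the explicit stabilizer $\mathrm{Stab}_\Gamma(\tilde v_0)=\mathrm{GL}_2(\mathbb{F})$, which has no size-one orbit on $\mathbb{P}^1(\mathbb{F}[P_\infty])$. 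You instead invoke the multiplicities in~(b) \emph{at the shared vertex} to rule out the reversed alignment directly (since $q^{\deg(P_\infty)}\neq 1$ distinguishes the two neighbors), forcing the cusps to run in the same direction and hence be equivalent rays. In effect, the paper spends non-equivalence first and closes with arithmetic at $\tilde v_0$, whereas you spend condition~(b) first and close with non-equivalence; your version is more self-contained, staying entirely inside the combinatorics of Lemma~\ref{L61}.

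One caveat: your disposal of the mixed case (``$v_i$ with $i\geq1$ is not in the center, while $v'_0$ is'') leans on a reading of Lemma~\ref{L61} that is natural but not stated there---the lemma only asserts that $v_0$ lies in the center, not that the $v_i$ with $i\geq1$ lie outside it. You can almost avoid this: if $v_i=v'_0$ with $i\geq2$, then $m_{v'_1,v'_0}=q^{\deg(P_\infty)}$ versus $m_{v_{i-1},v_i}=1$ forces $v'_1=v_{i+1}$, reducing to your main case. The residual configuration $v_1=v'_0,\ v_0=v'_1$ is exactly the reversed overlap with $t=1$, and it is precisely here that the paper's appeal to the stabilizer of $\tilde v_0$ is doing real work; your center-membership argument covers it only if one grants that interpretation of the decomposition.
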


\begin{proof}

Let $\mathfrak{c}$ and $\mathfrak{c}'$ be two cusps with respective 
vertex sets $\lbrace v_i \rbrace_{i=0}^{\infty}$ and $\lbrace v'_i \rbrace_{i=0}^{\infty}$.
  As the cusps are non-equivalent  rays, condition (a) implies that
  the only way there could be a non-trivial intersection wold be
  $v_t=v'_0, v_{t-1}=v'_1,\dots,v_0=v'_t$ for some $t>0$.
  Since every non-initial vertex in a cusp has exactly two neighbors, 
  this condition precludes us to have additional edge in any vertex
  of the union $\mathfrak{c}\cup\mathfrak{c}'$.
  In this case every vertex in $\Gamma\backslash\mathfrak{t}$ belong 
  to either cusp, whence $\Gamma\backslash\mathfrak{t}=
  \mathfrak{c}\cup\mathfrak{c}'$ is a maximal
  path. Condition (b) implies that the stabilizer of every vertex 
  in $\mathfrak{t}$ has
  an orbit of neighbors of size $1$ and an orbit of neighbors of size
  $q^{\deg(P_\infty)}$. This is certainly false for the vertex $\tilde{v}_0$,
  whose stabilizer is $\mathrm{GL}_2(\mathbb{F})$, which acts on
  $\mathbb{P}^1(\mathbb{F}[P_\infty])$ without a size-one orbit (c.f. \cite[Ch. II, \S 2.1, Ex. 6]{SerreTrees}).
  The result follows.
\end{proof}

\begin{rem}
In Example~\ref{E3}, we compute the size of all the $\mathrm{GL}_2(\mathbb{F})$-orbits in $\mathbb{P}^1(\mathbb{F}[P_\infty])$, when $\deg(P_{\infty})=2$.
The fact that there is no size-one orbit for any value of
$\deg(P_{\infty})$ is quite straightforward.
Indeed, an element in a size-one orbit is an invariant point 
for the whole group.
But $\sigma(z)=z+1$ leaves no finite invariant point, while
$\iota(z)=-1/z$ does not leave $\infty$ invariant.
\end{rem}

We use throughout the convention that the cusps, as defined above,
are maximal, i.e., they cannot be enlarged while satisfying the 
conditions in Lemma~\ref{L61}. In other words, we assume the center
is as small as possible. Next result is an easy consequence of
Lemma~\ref{lemma strict standard ray}:

\begin{lem}\label{lemma strict is cuspidal}
The strict standard ray $\mathfrak{r}_{\infty}^s$ is contained in a cusp.\qed
\end{lem}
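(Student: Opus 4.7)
The plan is to verify that every vertex $\V_j$ of $\mathfrak{r}_\infty^s$ satisfies the non-initial cusp conditions (a) and (b) of Lemma~\ref{L61}, and then invoke the maximality convention on cusps stated just before the statement to conclude. The main substantive input is Lemma~\ref{lemma strict standard ray}; the rest is a short multiplicity computation.

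More concretely, I would fix $j>N$ and work with the lift $\tilde{v}_j=w_0^{[-j]}$. It has $q^{\deg(P_\infty)}+1$ neighbors in $\mathfrak{t}$: the unique upward neighbor $\tilde{v}_{j+1}$, together with the $q^{\deg(P_\infty)}$ downward neighbors that form the set $\mathfrak{v}_j$ and include $\tilde{v}_{j-1}$. Lemma~\ref{lemma strict standard ray} tells me that $\mathrm{Stab}_\Gamma(\tilde{v}_j)$ acts transitively on $\mathfrak{v}_j$, so every element of $\mathfrak{v}_j$ projects to $\V_{j-1}$ in $\Gamma\backslash\mathfrak{t}$. Consequently, $\V_j$ has only $\V_{j-1}$ and $\V_{j+1}$ as neighbors in the quotient, and these are distinct by Lemma~\ref{lemma standard ray}; this gives condition (a). For condition (b), all $q^{\deg(P_\infty)}$ downward neighbors project to $\V_{j-1}$, so $m_{\V_j,\V_{j-1}}=q^{\deg(P_\infty)}$; meanwhile $\tilde{v}_{j+1}$ is the unique neighbor of $\tilde{v}_j$ projecting to $\V_{j+1}$, because any element of $\mathfrak{v}_j$ doing so would have to be $\Gamma$-equivalent to both $\tilde{v}_{j-1}$ and $\tilde{v}_{j+1}$, contradicting Lemma~\ref{lemma standard ray}. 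Hence $m_{\V_j,\V_{j+1}}=1$.

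With conditions (a) and (b) verified at every $\V_j$ with $j>N$, I would close the argument by appealing to the maximality convention on cusps. The matching of the unit multiplicity with the forward direction identifies the forward tail of $\mathfrak{r}_\infty^s$ with the forward tail of some cusp; since maximality forbids any $\V_j$ with $j>N$ from lying in the center, the entire strict standard ray sits inside that cusp. I do not foresee a serious obstacle: the substance is already packaged in Lemma~\ref{lemma strict standard ray}, and the only point requiring a moment of care is matching the two multiplicities $1$ and $q^{\deg(P_\infty)}$ to the correct ``forward'' and ``backward'' directions along the cusp, which is exactly what the computation above accomplishes.
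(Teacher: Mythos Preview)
Your proposal is correct and is precisely the argument the paper has in mind: the paper's proof is literally a \qed, with the preceding sentence saying the result is an easy consequence of Lemma~\ref{lemma strict standard ray}. You have simply spelled out that consequence, computing the multiplicities $m_{\V_j,\V_{j\pm1}}$ from the transitivity in Lemma~\ref{lemma strict standard ray} and the injectivity in Lemma~\ref{lemma standard ray}, and then invoking the maximality convention on cusps.
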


Since $\tilde{v}_{i+1}(\alpha)$ is a neighbor of $\tilde{v}_i(\alpha)$
that fails to coincide with $\tilde{v}_{i-1}(\alpha)$, we cannot have
$v_{i+1}(\alpha)=v_{i-1}(\alpha)$ unless 
$m_{v_i(\alpha),v_{i-1}(\alpha)}>1$.  
We note that once the walk of an element $\alpha$ enters
a cusp, and give a step back, in the sense that 
$v_{i+1}(\alpha)=v_{i-1}(\alpha)$ is closer to the initial vertex than
$v_i(\alpha)$, then the walk can no longer reverse its direction
again until it reaches the initial vertex $v_0$ of that cusp. 
This remark is essential in all that follows.

\begin{lem}\label{L63}
   If $\alpha\in\mathcal{O}_\infty$ is irrational, the set of positive 
   integers $i$ for which $v_i(\alpha)$ belongs to the center of the graph
   $\Gamma\backslash\mathfrak{t}$ is infinite.   
\end{lem}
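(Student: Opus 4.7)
The plan is to argue by contradiction: suppose only finitely many positive integers $i$ satisfy $v_i(\alpha)\in\mathrm{center}$, fix $i_0$ beyond which the walk avoids the center, and derive that $\alpha$ must be rational.

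First, I would trap the walk inside one cusp and force it to be monotonic. By Lemma~\ref{L62} the cusps of $\Gamma\backslash\mathfrak{t}$ meet only at their initial vertices, and those initial vertices all lie in the center; since the walk is a connected sequence of neighboring vertices avoiding the center for $i\ge i_0$, it must remain inside the interior of a single cusp $\mathfrak{c}$. The observation recorded immediately before the lemma states that once a walk steps back inside a cusp it must continue monotonically until it returns to the cusp's initial vertex. A back-step after $i_0$ would therefore eventually force the walk to re-enter the center, contradicting the choice of $i_0$. Hence the position of $v_i(\alpha)$ in $\mathfrak{c}$ strictly increases for $i\ge i_0$.

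Second, I would lift this outward walk to a pre-chosen ray attached to $\mathfrak{c}$. By Theorem~S~\ref{teo serre quot} combined with Definition~\ref{def good quotient}, the cusp $\mathfrak{c}$ is the image of some ray $\mathfrak{r}\in\mathfrak{R}_{\Gamma}$, and condition~(a) of that definition guarantees that its visual limit $\beta=\partial_\infty(\mathfrak{r})$ lies in $\mathbb{P}^1(K)$. Pick $\gamma\in\Gamma$ sending $\tilde{v}_{i_0}(\alpha)$ onto a vertex of $\mathfrak{r}$. Then $\gamma$ carries the descending tail of $\mathfrak{r}_\alpha$ starting at $\tilde{v}_{i_0}(\alpha)$ to a ray beginning on $\mathfrak{r}$ that projects to the same monotonic outward walk along $\mathfrak{c}$. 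The multiplicity-one condition $m_{v_k^{\mathfrak{c}},v_{k+1}^{\mathfrak{c}}}=1$ from Lemma~\ref{L61}(b) asserts that from any tree-vertex projecting onto $v_k^{\mathfrak{c}}$ there is exactly one neighbor projecting onto $v_{k+1}^{\mathfrak{c}}$; a straightforward induction starting at $\gamma\tilde{v}_{i_0}(\alpha)$ then forces $\gamma$ to carry the whole tail of $\mathfrak{r}_\alpha$ onto a tail of $\mathfrak{r}$ vertex by vertex. Taking visual limits yields $\gamma(\alpha)=\beta\in\mathbb{P}^1(K)$, so $\alpha=\gamma^{-1}(\beta)\in K$, contradicting the hypothesis that $\alpha$ is irrational.

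The main obstacle is the inductive lifting step. After choosing $\gamma$ only at the base vertex $\tilde{v}_{i_0}(\alpha)$, one must be sure that the unique forward neighbor provided by multiplicity one really belongs to $\mathfrak{r}$, rather than to some other $\Gamma$-translate of it. This is where the stabilizer-nesting condition of Definition~\ref{def good quotient}(d), which underlies the multiplicity-one statement, has to be combined with the monotonic-outward character of the walk produced in the first step.
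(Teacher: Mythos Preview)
Your argument is correct, and the paper itself acknowledges this route: right after reducing to the case where the walk enters a cusp and never steps back, the paper writes ``This is actually a consequence of (a) in Def.~\ref{def good quotient}, but we include an independent proof here for the sake of completeness.'' Your proof is precisely the unpacking of that sentence, so it is a legitimate alternative to the paper's argument rather than a reproduction of it.

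The paper's independent proof goes by a different mechanism. Instead of lifting the tail of $\mathfrak{r}_\alpha$ onto one of the distinguished rays in $\mathfrak{R}_\Gamma$, the paper exploits condition~(e) of Definition~\ref{def good quotient} (equivalently Lemma~\ref{L61}(b) with $m_{v_i,v_{i-1}}=q^{\deg(P_\infty)}$) to produce nontrivial elements of $\Gamma$ that fix each $\tilde{v}_i(\alpha)$ and permute its other neighbors; these elements then fix the entire outward tail and hence the visual limit $\alpha$. This forces $\alpha$ to be at most quadratic over $K$, and a short Galois argument (ruling out the inseparable and separable quadratic cases in turn) finishes. Your approach is cleaner in that it uses only the combinatorics of the quotient graph; the paper's approach avoids invoking the specific lift $\mathfrak{r}$ and the multiplicity-one induction, at the cost of bringing in the arithmetic of quadratic extensions.

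On the obstacle you flag in your last paragraph: there is no gap. Once $\gamma\tilde{v}_{i_0}(\alpha)$ sits on $\mathfrak{r}$, the next vertex of $\mathfrak{r}$ is \emph{a} neighbor projecting to the next cusp vertex, and $\gamma\tilde{v}_{i_0+1}(\alpha)$ is also such a neighbor; since $m_{v_k^{\mathfrak{c}},v_{k+1}^{\mathfrak{c}}}=1$ these two neighbors coincide. The induction is immediate and does not require any further appeal to condition~(d).
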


\begin{proof}
  The preceding discussion shows that   $v_i(\alpha)$ belongs to the center of
   $\Gamma\backslash\mathfrak{t}$ for infinitely many values of $i$ unless
   the walk corresponding to $\alpha$ eventually enters a cusp $\mathfrak{c}$
   and never gives a step back. We claim that this can only be the case 
   for a rational visual limit $\alpha$.
   This is actually a consequence of (a) in Def.~\ref{def good quotient}, but we include an independent proof here for the sake of completeness.
   Let $v_0,v_1,v_2,\dots$ be the 
   vertices in the cusp $\mathfrak{c}$ and let 
   $\tilde{v}_0,\tilde{v}_1,\tilde{v}_2,\dots$ be the pre-images in 
   $\mathfrak{t}$ corresponding to a ray whose visual limit is $\alpha$.
   
   It follows from (b) in  Lemma~\ref{L61} that, for any $i>0$, there must exist
   non-trivial matrices in $\Gamma$ that permutes all neighbors of 
   $\tilde{v}_i$ other than $\tilde{v}_{i+1}$, while leaving 
   $\tilde{v}_i$ invariant. In particular, this matrices must also
   leave $\tilde{v}_{i+1}$ invariant, since it is the only neighbor of
   $\tilde{v}_i$ in the corresponding class. This reasoning can also be 
   applied to every subsequent vertex $\tilde{v}_{i+2},\tilde{v}_{i+3},\dots$,
   so that they also leave the visual limit $\alpha$ invariant. As an invariant
   value for a Moebius transformation with coefficients in $K$, the element
   $\alpha$ must, at most, be quadratic over $K$.

   If the extension $K[\alpha]/K$ is an inseparable quadratic extension,
   then it is ramified at all places, since fully inseparable extensions of
   function fields of curves over perfect fields have such property. This
   contradicts the fact that $\alpha\in K_\infty$. On the other hand, if
   $K[\alpha]/K$ is a separable quadratic extension, the second root
   $\alpha'$ of the irreducible polynomial of $\alpha$ over $K$ must
   also belong to $K_\infty$. By increasing the value of $i$ in the 
   above argument, we can assume that one of the matrices leaving
   $\tilde{v}_i$ invariant moves the edge that points towards
   $\alpha'$. Thus we ensure the existence of a matrix in $\Gamma$
   whose action leaves $\alpha$ invariant, but not $\alpha'$. This is
   not possible, as they are conjugate roots of an irreducible polynomial.
   We must therefore conclude that $\alpha\in K$.
\end{proof}

\section{Approximation and quotient graphs}\label{section general approximation}

We keep the notation from section \S~\ref{section graphs}.
Indeed, we set $\Gamma=\mathrm{GL}_2(A)$, and we write
$\mathfrak{r}_{\infty}$ for the image of
$\tilde{\mathfrak{r}}_{\infty}$ 
in the \textbf{S}-graph $\Gamma \backslash \mathfrak{t}$
(c.f. Lemma~\ref{lemma standard ray}).
The strict standard ray $\mathfrak{r}_{\infty}^{s}$ is 
the sub-ray whose vertices are precisely the $\mathbf{v}_j$ with 
$j>N=\left\lceil\frac{2g(X)-1}{\deg(P_\infty)}\right\rceil$.
As in \S~\ref{section intro}, we call an  element 
$\alpha \in K_{\infty}$ well approximable when it is the 
limit of a sequence of good approximations, where
a good approximation of $\alpha$ is a rational element $f/g$,
satisfying  both $f A + g A =A$ and $\left|\alpha-\frac{f}{g}\right|_\infty<\frac{|\pi|_{\infty}^N}{|g|_\infty^2}$.

\begin{lem}\label{l61a}
Let $r,g \in A$. If $|r|_\infty\geq |g|_\infty|\pi|_\infty^{-N-1}$, then we can write $r=gu+r'$, with $u,r'\in A$ and $|r|_\infty> |r'|_\infty$.
\end{lem}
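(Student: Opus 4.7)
The plan is to recast the hypothesis in terms of pole orders at $P_\infty$ and then produce $u$ by a Riemann--Roch-based ``leading term'' cancellation argument. Setting $d:=\nu_\infty(g)-\nu_\infty(r)$, the inequality $|r|_\infty\geq |g|_\infty|\pi|_\infty^{-N-1}$ is equivalent to $d\geq N+1$. The goal is to exhibit $u\in A$ with $\nu_\infty(gu)=\nu_\infty(r)$ whose leading coefficient at $P_\infty$ matches that of $r$; then $r':=r-gu\in A$ automatically satisfies $|r'|_\infty<|r|_\infty$, as required.

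The technical input is Riemann--Roch applied to the divisors $nP_\infty$ on $X$: for $n\geq N$ one has $n\deg(P_\infty)\geq 2g(X)-1>2g(X)-2$, so the canonical class contributes nothing and $\dim_{\mathbb{F}}A[n]=n\deg(P_\infty)-g(X)+1$. Consequently, for $n\geq N+1$ both $n$ and $n-1$ lie in this stable range, which gives $\dim_{\mathbb{F}}(A[n]/A[n-1])=\deg(P_\infty)$. Combined with the canonical injection
$$A[n]/A[n-1]\hookrightarrow \pi^{-n}\mathcal{O}_\infty/\pi^{-n+1}\mathcal{O}_\infty\cong\mathbb{F}[P_\infty],$$
whose injectivity follows from $A[n]\cap\pi^{-n+1}\mathcal{O}_\infty=A[n-1]$, the matching of $\mathbb{F}$-dimensions upgrades this inclusion to an isomorphism. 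Informally: every prescribed leading term in $\mathbb{F}[P_\infty]$ can be realised by some element of $A$ of pole order exactly $n$, as soon as $n\geq N+1$.

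With this in hand, I will apply the isomorphism at $n=d$ to the class of $r/g\in K_\infty$ in $\pi^{-d}\mathcal{O}_\infty/\pi^{-d+1}\mathcal{O}_\infty$ (well defined because $\nu_\infty(r/g)=-d$), obtaining $u\in A[d]$ with $u\equiv r/g\pmod{\pi^{-d+1}\mathcal{O}_\infty}$. Multiplying by $g$ then yields
$$\nu_\infty(r-gu)=\nu_\infty(g)+\nu_\infty(u-r/g)\geq \nu_\infty(g)+(-d+1)=\nu_\infty(r)+1,$$
so $|r-gu|_\infty<|r|_\infty$, and $r':=r-gu$ clearly lies in $A$ since $r,g,u\in A$.

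The only genuine obstacle is the dimension count: one needs \emph{both} $A[d]$ and $A[d-1]$ to sit in the Riemann--Roch stable range in order to identify $A[d]/A[d-1]$ with the full residue space $\mathbb{F}[P_\infty]$. This is precisely why the hypothesis is written with the shift $N+1$ rather than $N$, and it explains the role of the constant $N=\left\lceil(2g(X)-1)/\deg(P_\infty)\right\rceil$ throughout the paper. All remaining steps are routine manipulations of the non-archimedean valuation $\nu_\infty$.
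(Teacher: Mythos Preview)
Your proof is correct and follows essentially the same Riemann--Roch dimension count as the paper. The only cosmetic difference is that the paper phrases the key step as surjectivity of the multiplication-by-$g$ map $A[k+1]/A[k]\to A[k+s+1]/A[k+s]$ (with $s=-\nu_\infty(g)$ and $k+1=d$), whereas you first divide by $g$ and use the identification $A[d]/A[d-1]\cong\mathbb{F}[P_\infty]$; these are equivalent formulations of the same argument.
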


\begin{proof}
    We set $|g|_\infty=|\pi|^{-s}_\infty$, and
   $$A_k=\left\{f\in A\Big| |f|_\infty\leq |\pi|_\infty^{-k}\right\},$$
   for every integer $k\geq N$. By Riemman-Roch's Theorem (See 
   \cite[Lemma 1.2]{Mason}),   
   multiplication by $g$ induces an injective linear map
   $m_g:A_{k+1}/A_k\rightarrow A_{k+s+1}/A_{k+s}$ between vector spaces of the same dimension, and it is therefore surjective. 
   Set $k$ by the relation 
   $|r|_\infty=|\pi|_\infty^{-k-s-1}$. If we choose $u\in A_{k+1}$ satisfying
   $m_g\left(u+A_k\right)=r+A_{k+s}$, the result follows.
\end{proof}

For a ball $B=B_u^{[n]}$, we write $n=\nu_{\infty}(B)$.

\begin{lem}\label{nclumov}
Let $\rho$ be a Moebius transformation. Let $B=B_u^{[n]}$ a ball
whose point-wise image $\rho(B)$ is also a ball. Then
$\nu_{\infty}\Big(\rho(B)\Big)=n+\nu_{\infty}(\rho'(u))$.
\end{lem}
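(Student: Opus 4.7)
The plan is to reduce the statement to an explicit computation in $K_\infty$ and then invoke the ultrametric inequality.

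First I would write $\rho(z)=\frac{az+b}{cz+d}$ and compute
\[
\rho(x)-\rho(u)=\frac{(ad-bc)(x-u)}{(cx+d)(cu+d)}=\rho'(u)\cdot\frac{(cu+d)(x-u)}{cx+d},
\]
using $\rho'(u)=(ad-bc)/(cu+d)^{2}$. Thus the question is to compute $\nu_\infty(cx+d)$ as $x$ ranges over $B$. The case $c=0$ is immediate: $\rho$ is affine, $\rho'$ is constant, and $\rho(x)-\rho(u)=(a/d)(x-u)$, so $\nu_\infty(\rho(x)-\rho(u))=\nu_\infty(\rho'(u))+\nu_\infty(x-u)$; taking $x$ with $\nu_\infty(x-u)=n$ yields the claim. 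So assume $c\neq 0$.

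Next I would use the hypothesis that $\rho(B)$ is a ball. Since $\rho$ has a pole at $-d/c$, this forces $-d/c\notin B$, which translates to $\nu_\infty(cu+d)<n+\nu_\infty(c)$. For any $x\in B$ we then have $\nu_\infty\bigl(c(x-u)\bigr)\geq n+\nu_\infty(c)>\nu_\infty(cu+d)$, so the ultrametric inequality gives
\[
\nu_\infty(cx+d)=\nu_\infty\bigl((cu+d)+c(x-u)\bigr)=\nu_\infty(cu+d).
\]
Plugging this into the formula above shows
\[
\nu_\infty\bigl(\rho(x)-\rho(u)\bigr)=\nu_\infty(\rho'(u))+\nu_\infty(x-u)\geq\nu_\infty(\rho'(u))+n,
\]
with equality whenever $\nu_\infty(x-u)=n$.

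Finally I would conclude: the displayed inequality shows $\rho(B)\subseteq B_{\rho(u)}^{[n+\nu_\infty(\rho'(u))]}$, while the existence of at least one $x\in B$ with $\nu_\infty(x-u)=n$ (e.g.\ $x=u+\pi^{n}$) produces a point of $\rho(B)$ lying exactly on the boundary of that ball. Since $\rho(B)$ is assumed to be a ball containing $\rho(u)$, this forces $\rho(B)=B_{\rho(u)}^{[n+\nu_\infty(\rho'(u))]}$, and hence $\nu_\infty(\rho(B))=n+\nu_\infty(\rho'(u))$. The only mildly delicate point is the translation of ``$\rho(B)$ is a ball'' into the no-pole condition, which is really where the hypothesis is used; everything else is elementary non-archimedean algebra.
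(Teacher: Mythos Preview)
Your proof is correct and follows essentially the same line as the paper's. Both arguments hinge on the observation that the hypothesis ``$\rho(B)$ is a ball'' forces $\nu_\infty\!\left(\frac{c(x-u)}{cu+d}\right)>0$ for all $x\in B$, and then deduce $\nu_\infty(\rho(x)-\rho(u))=\nu_\infty(\rho'(u))+\nu_\infty(x-u)$. The only stylistic difference is that the paper expands $\rho$ as a geometric series about $u$ and reads off the inequality from convergence, whereas you use the closed-form identity $\rho(x)-\rho(u)=\rho'(u)\,\frac{(cu+d)(x-u)}{cx+d}$ together with the no-pole condition $-d/c\notin B$ and a single application of the ultrametric inequality. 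Your route is slightly more elementary (no series, and the case $c=0$ is handled cleanly rather than being implicit), but the underlying content is identical.
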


\begin{proof}
Set $\rho(z)=\frac{az+b}{cz+d}$.
Moebius transformations are analytic functions whose Taylor expansion at a point $u \in K_{\infty}$
is quite explicit:
$$\rho(z)=\sum_{n=0}^\infty\rho_n(u)(z-u)^n=\frac ac+\frac{bc-ad}{c(cu+d)}
\sum_{n=0}^\infty\left(\frac{c(u-z)}{cu+d}\right)^n.$$
Here $\rho_0=\rho$, while $\rho_1=\rho'$ is the derivative.
Since $\rho(B)$ is a ball, this series is convergent on $B$. It follows that $\nu_{\infty}\left(\frac{c(z-u)}{cu+d}\right)>0$
for any $z\in B$. This implies that $\rho(z)-\rho(u)=\sum_{n=1}^\infty\rho_n(u)(z-u)^n$, and the $n$-th term in this sum is 
$\rho_n(u)(z-u)^n=\frac{bc-ad}{c(cu+d)}\cdot\left(\frac{c(z-u)}{cu+d}\right)^n$,
where the valuation of the power is strictly increasing. Therefore, the first term is
dominant for all $z$. 
We conclude that
$$\nu_{\infty}\Big(\rho(z)-\rho(u)\Big)=\nu_{\infty}\Big(\rho_1(u)(z-u)\Big)=\nu_{\infty}\left(\rho'(u)\right)+
\nu_{\infty}(z-u).$$
Since $z$ is an arbitrary element in the ball, the result follows. 
\end{proof}

In the sequel, we denote by $\iota$ the Moebius transformation defined by $\iota(z)=-z^{-1}$, for all $z \in \mathbb{P}^1(K_{\infty})$. One lifting is the matrix
$\mathbf{g}_\iota=\sbmattrix 0{-1}10 \in \Gamma$. Recall that $B_{\alpha,\beta}$
is the smallest ball containing both $\alpha$ and $\beta$,
while $w_{\alpha,\beta}$ is the corresponding vertex.
See \S~\ref{subsection BT}.

\begin{lem}\label{l61}
Let $\frac fg$, with $f,g\in A$, be a good approximation of
$\alpha$. Then the image of $w_{\frac fg,\alpha}$ in the
quotient graph belongs to the strict
standard ray $\mathfrak{r}_{\infty}^{s}$. More precisely,
there exists a Moebius transformation 
$\eta(z)=\frac{sz-f}{rz-g}$,
with $r,s\in A$ satisfying $fr-gs=1$, for which 
$B_{\alpha,\frac fg}=\eta\left(B_{-\beta^{-1},0}\right)$ with 
$\nu_{\infty}(\beta)>-N$.
\end{lem}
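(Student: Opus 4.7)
The plan is to construct the Moebius transformation $\eta$ directly from a Bézout relation for the coprime pair $(f,g)$ and then to verify the ball identity together with the valuation estimate on $\beta$ by combining Lemma~\ref{l61a} with the incenter machinery of Lemmas~\ref{lemma incenter} and \ref{lemma inc MT}. Since $fA+gA=A$, I would choose a Bézout pair $(r_0,s_0)\in A\times A$ with $fr_0-gs_0=1$; the substitution $(r,s):=(r_0+gu,s_0+fu)$ with $u\in A$ preserves this relation, and applying Lemma~\ref{l61a} iteratively to $r_0$ modulo $g$ produces a pair satisfying $\nu_\infty(r)>\nu_\infty(g)-N-1$. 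The matrix $\mathbf{g}_\eta=\sbmattrix s{-f}r{-g}$ has determinant $fr-gs=1\in\mathbb{F}^*$, hence belongs to $\Gamma$, and $\eta(z)=(sz-f)/(rz-g)$ satisfies $\eta(0)=f/g$ and $\eta(\infty)=s/r$. Define $\beta$ through $\eta^{-1}(\alpha)=-\beta^{-1}$; inversion yields $\beta=(s-r\alpha)/(g\alpha-f)$.

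Next, I would establish the ball identity $B_{\alpha,f/g}=\eta(B_{-\beta^{-1},0})$. Because $\infty$ is one of the three points, Lemma~\ref{lemma incenter} identifies $B_{-\beta^{-1},0}$ with $\mathrm{In}(-\beta^{-1},0,\infty)$, and Lemma~\ref{lemma inc MT} transports this to $\mathbf{g}_\eta*w_{-\beta^{-1},0}=\mathrm{In}(\alpha,f/g,s/r)$. By Lemma~\ref{lemma incenter} again, this incenter collapses to $w_{\alpha,f/g}$ as soon as $\nu_\infty(\alpha-f/g)\ge\nu_\infty(f/g-s/r)=-\nu_\infty(gr)$, and this follows from the good-approximation bound $\nu_\infty(\alpha-f/g)>N-2\nu_\infty(g)$ combined with the reduction $\nu_\infty(r)>\nu_\infty(g)-N-1$; the companion inequality $\nu_\infty(\alpha-f/g)\ge\nu_\infty(\alpha-s/r)$ then follows by the ultrametric.

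To compute the valuation of $\beta$ and conclude membership in $\mathfrak{r}_\infty^s$, I would exploit the identity $s-r\alpha=-1/g-r(\alpha-f/g)$ coming from $fr-gs=1$. The approximation hypothesis and the reduction on $r$ force $|r(\alpha-f/g)|_\infty<1/|g|_\infty$ strictly, so the strict ultrametric gives $|s-r\alpha|_\infty=1/|g|_\infty$. Dividing by $|g\alpha-f|_\infty<|\pi|_\infty^N/|g|_\infty$ yields $|\beta|_\infty>|\pi|_\infty^{-N}$, i.e.\ the required valuation estimate on $\beta$. Since $\mathbf{g}_\eta\in\Gamma$, the vertices $w_{\alpha,f/g}$ and $w_{-\beta^{-1},0}$ share their image in $\Gamma\backslash\mathfrak{t}$; applying the lift $\sbmattrix 0{-1}10\in\Gamma$ of $\iota(z)=-1/z$ via Lemma~\ref{lemma inc MT} then moves $w_{-\beta^{-1},0}$ to $\mathrm{In}(\beta,\infty,0)=w_{0,\beta}=w_0^{[\nu_\infty(\beta)]}$, which equals $\tilde v_j\in\tilde{\mathfrak{r}}_\infty$ with $j=-\nu_\infty(\beta)>N$, and its projection to $\Gamma\backslash\mathfrak{t}$ lies in $\mathfrak{r}_\infty^s$.

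The main technical obstacle is calibrating the reduction supplied by Lemma~\ref{l61a} so that every invocation of the strict ultrametric is genuinely strict: $\nu_\infty(r)>\nu_\infty(g)-N-1$ must be sharp enough both for $-1/g$ to dominate in $s-r\alpha$ and for $\mathrm{In}(\alpha,f/g,s/r)$ to coincide with $w_{\alpha,f/g}$. Everything else reduces to routine valuation bookkeeping.
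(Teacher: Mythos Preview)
Your argument is correct and follows essentially the same route as the paper: the B\'ezout pair reduced via Lemma~\ref{l61a}, the incenter identification through Lemmas~\ref{lemma incenter} and~\ref{lemma inc MT}, and a valuation computation yielding $-\nu_\infty(\beta)>N$. The only variation is that the paper extracts this last inequality from Lemma~\ref{nclumov} (the derivative formula for the radius of the image ball) applied to $\eta=\mu\circ\iota$, rather than from your direct ultrametric identity $s-r\alpha=-1/g-r(\alpha-f/g)$; both are equally valid.
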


\begin{proof}
Assume $\frac fg$ is a good approximation of $\alpha$.
Since, by definition, $fA$ and $gA$ are co-maximal ideals, there exists elements $r$ and $s$ in $A$ satisfying
$fr-gs = 1$. Hence, the Moebius transformation 
$\mu(z)=\frac{fz+s}{gz+r}$, which has coefficients in $A$,
satisfies both $\mu(\infty)=\frac fg$ and 
$\mu(0)=\frac sr$.
By Lemma \ref{l61a} we can
assume that $|r|_\infty\leq |g|_\infty|\pi|_\infty^{-N}$, or we define $r'=r-gu$
as in the lemma, so we get $fr'-g(s-uf)=1$, 
which can be used to
replace $r$ by the smaller element $r'$.
Set $\beta=\mu^{-1}(\alpha)$.
Note that  $\mu$ maps the triplet 
$(\infty,\beta,0)$ to 
$\left(\frac fg,\alpha,\frac sr\right)$. Then, by Lemma \ref{lemma inc MT},
it must take the incenter $w_{0,\beta}$
of the first triplet to the incenter
of the second. 
Given the inequality satisfied by $|r|_\infty$
and the fact that $\frac fg$ is a good approximation of $\alpha$, we can write
$$\left|\frac sr-\frac fg\right|_\infty=\frac1{|gr|_\infty}\geq
\frac{|\pi|_\infty^{N}}{|g|_\infty^2}>\left|\alpha-\frac fg\right|_\infty.$$
Then, it follows from Lemma~\ref{lemma incenter} that the incenter of the second triplet is $w_{\frac fg,\alpha}$.
Furthermore, the same inequality shows that the transformation
$\eta=\mu\circ\iota$
maps the ball $B_{0,\beta^{-1}}$ onto the ball $B_{\frac fg,\alpha}$. 
It just remains to prove that the image of $w_{0,\beta^{-1}}$ 
belongs to the strict standard ray $\mathfrak{r}_{\infty}^{s}$.
Indeed, an application of the Lemma~\ref{nclumov} shows that
$$\nu_{\infty}\left(\frac fg-\alpha\right)=-\nu_{\infty}(\beta)+\nu_{\infty}\left(\eta'(0)\right)=
-\nu_{\infty}(\beta)+\nu_{\infty}\left(\frac1{g^2}\right).$$
We conclude that $-\nu_{\infty}(\beta)>N$, whence the result follows.    
\end{proof}

Recall that $v_0(\alpha) v_1(\alpha) v_2(\alpha) \cdots$ denotes the walk of $\alpha$, see \S~\ref{section walk}.
Let $m\geq0$ and $M\geq N$ be integers.
We write $v_{m+M+1}(\alpha) \stackrel\nearrow= \V_{M+1}$, when we have both $v_{m+M+1}(\alpha)= \V_{M+1}$ and
$v_{m+M}(\alpha) = \V_{M}$.
By the discussion following Lemma~\ref{lemma strict is cuspidal}, this implies
$v_{m+t}(\alpha) = \V_{t}$ for $N\leq t\leq M+1$.

\begin{lem}\label{PWA2}
Let $M\geq N$.
An irrational element $\alpha$ has an $M$-approximation $\frac fg$
satisfying $\nu_{\infty}(g)=-n$ if and only if $v_{2n+M+1}(\alpha)  \stackrel\nearrow= \V_{M+1}$.
\end{lem}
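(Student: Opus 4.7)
The plan is to prove both implications by tracking how $\mathbf{g}_{\eta}$, for a suitable $\eta \in \mathrm{PGL}_2(K_{\infty})$, acts on a segment of the ascending ray emanating from $w_0^{[0]}$, and then projecting to the quotient. Throughout, I use that $w_0^{[j]}$ lies in the $\Gamma$-orbit $\mathbf{v}_j$ for every $j \in \mathbb{Z}_{\geq 0}$, since $\sbmattrix{0}{1}{1}{0} \in \Gamma$ sends $w_0^{[j]}$ to $\tilde{v}_j = w_0^{[-j]}$.

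For the forward direction, I would apply Lemma~\ref{l61} to $\frac{f}{g}$ (which is a good approximation since $M \geq N$), producing $\eta$ and $\beta$ with $\mathbf{g}_{\eta}(w_0^{[k-2n]}) = \tilde{v}_k(\alpha)$, where $k := \nu_{\infty}(\alpha - f/g) \geq 2n+M+1$. The ascending ray starting at $w_0^{[k-2n]}$ maps under $\mathbf{g}_{\eta}$ to a ray starting at $\tilde{v}_k(\alpha)$ with visual limit $s/r = \eta(\infty)$. Using $|s/r - f/g|_{\infty} = 1/|rg|_{\infty} \geq |\pi|_{\infty}^{2n+N}$ together with the approximation bound, one computes $\ell := \nu_{\infty}(s/r - \alpha) = \nu_{\infty}(s/r - f/g) \leq 2n+N$. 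Hence the ascending portion of the image ray visits exactly $w_{\alpha}^{[k]}, w_{\alpha}^{[k-1]}, \ldots, w_{\alpha}^{[\ell]}$, yielding $\mathbf{g}_{\eta}(w_0^{[j]}) = w_{\alpha}^{[j+2n]}$ for every $j \in [\ell-2n, k-2n]$. Since $\ell - 2n \leq N \leq M$ and $M+1 \leq k-2n$, both $j = M$ and $j = M+1$ lie in this range, so projecting gives $v_{j+2n}(\alpha) = \mathbf{v}_j$ for $j \in \{M, M+1\}$.

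For the reverse direction, the hypothesis shows that the oriented edge $(w_{\alpha}^{[2n+M]}, w_{\alpha}^{[2n+M+1]})$ lies in the $\Gamma$-orbit of $(\tilde{v}_M, \tilde{v}_{M+1})$: both project to the unique oriented edge from $\mathbf{v}_M$ to $\mathbf{v}_{M+1}$ in $\Gamma \backslash \mathfrak{t}$, and by Lemma~\ref{lemma standard ray} the endpoints sit in distinct $\Gamma$-orbits, so the orientation is rigid. Conjugating with $\sbmattrix{0}{1}{1}{0}$ yields $\mathbf{g} = \sbmattrix{a}{b}{c}{d} \in \Gamma$ with $\mathbf{g}(w_0^{[M]}) = w_{\alpha}^{[2n+M]}$ and $\mathbf{g}(w_0^{[M+1]}) = w_{\alpha}^{[2n+M+1]}$. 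Setting $f = b$, $g = d$, the identity $ad - bc = \det(\mathbf{g}) \in \mathbb{F}^*$ forces $fA + gA = A$.

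The main obstacle is verifying that the Möbius map $\gamma(z) = (az+b)/(cz+d)$ sends $B_0^{[M+1]}$ to a genuine closed ball in $K_{\infty}$, so that Lemma~\ref{nclumov} applies at $u = 0$. I plan a case analysis on the position of the pole $-d/c$: if the pole lay in $B_0^{[M+1]}$, then $\gamma$ would send both $B_0^{[M+1]}$ and $B_0^{[M]}$ to sets containing $\infty$, and matching these to the correct half of the bipartitions defined by $w_{\alpha}^{[2n+M+1]}$ and $w_{\alpha}^{[2n+M]}$ would force $\gamma(B_0^{[M+1]}) = \mathbb{P}^1 \setminus B_{\alpha}^{[2n+M+1]}$ and $\gamma(B_0^{[M]}) = \mathbb{P}^1 \setminus B_{\alpha}^{[2n+M]}$; then $B_0^{[M+1]} \subset B_0^{[M]}$ would require $B_{\alpha}^{[2n+M]} \subset B_{\alpha}^{[2n+M+1]}$, contradicting $B_{\alpha}^{[2n+M+1]} \subsetneq B_{\alpha}^{[2n+M]}$. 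The intermediate case (pole in $B_0^{[M]}$ but not $B_0^{[M+1]}$) analogously forces $B_{\alpha}^{[2n+M+1]}$ and $B_{\alpha}^{[2n+M]}$ to be disjoint, again absurd. Hence the pole lies outside both balls, and Lemma~\ref{nclumov} yields $\nu_{\infty}(\gamma'(0)) = -2\nu_{\infty}(d) = 2n$, giving $\nu_{\infty}(g) = -n$ and $\frac{f}{g} = \frac{b}{d} \in B_{\alpha}^{[2n+M+1]}$. Therefore $|\alpha - f/g|_{\infty} \leq |\pi|_{\infty}^{2n+M+1} < |\pi|_{\infty}^M/|g|_{\infty}^2$, completing the $M$-approximation.
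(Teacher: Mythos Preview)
Your proposal is essentially correct and covers both implications, but it is organized differently from the paper and has one small unjustified step.

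\textbf{Comparison with the paper.} The paper proves the implication ``walk $\Rightarrow$ approximation'' first, and crucially begins by replacing $M$ with the \emph{maximal} index for which $v_{2n+M+1}(\alpha)=\V_{M+1}$ (this exists since $\alpha$ is irrational), so that the walk turns back: $v_{2n+M+2}(\alpha)=\V_M$. It then chooses $\delta\in\Gamma$ sending $\tilde v_{2n+M+1}(\alpha)$ to $\tilde v_{M+1}$ and adjusts by the stabilizer (Lemma~\ref{lemma strict standard ray}) so that $\tilde v_{2n+M}(\alpha)$ goes to $\tilde v_M$; the turning-back forces the image of $\tilde v_{2n+M+2}(\alpha)$ off the ray to $\infty$, and an \emph{incenter} argument (Lemma~\ref{lemma inc MT}) identifies $\tilde v_{2n+M+1}(\alpha)=w_{\alpha,f/g}$ before applying Lemma~\ref{nclumov}. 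For the converse the paper applies Lemma~\ref{l61} and then \emph{reuses} the first computation with $\delta=\iota\circ\eta^{-1}$, finishing with a short induction down the cusp. Your approach avoids both the maximality trick and the incenter machinery: you work directly with the oriented edge $(w_0^{[M]},w_0^{[M+1]})$ and a pole analysis to justify Lemma~\ref{nclumov}. Your ``approximation $\Rightarrow$ walk'' direction is likewise more direct: tracking $\mathbf g_\eta(w_0^{[j]})=w_\alpha^{[j+2n]}$ along the range $[\ell-2n,k-2n]$ and projecting is cleaner than the paper's reduction to the other implication.

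\textbf{The one gap.} In your ``walk $\Rightarrow$ approximation'' direction you assert that the two oriented edges project to ``the unique oriented edge from $\V_M$ to $\V_{M+1}$'' and cite Lemma~\ref{lemma standard ray}. That lemma only gives $\V_M\neq\V_{M+1}$; it says nothing about multiplicity of edges between them. What you actually need is Lemma~\ref{lemma strict standard ray}: since $M+1>N$, the stabilizer $\mathrm{Stab}_\Gamma(\tilde v_{M+1})$ acts transitively on the neighbours of $\tilde v_{M+1}$ other than $\tilde v_{M+2}$, so all edges at $\V_{M+1}$ not leading to $\V_{M+2}$ form a single orbit, hence a single edge in $\Gamma\backslash\mathfrak t$. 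With this correction your construction of $\mathbf g\in\Gamma$ goes through. (Equivalently, you could imitate the paper: first move $\tilde v_{2n+M+1}(\alpha)$ to $\tilde v_{M+1}$ and then use the stabilizer to align the second vertex.)

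\textbf{A simplification.} Your case analysis on the pole is correct, but it can be collapsed: since $\mathbf g$ sends the edge $(w_0^{[M+1]},w_0^{[M]})$ to the \emph{upward} edge $(w_\alpha^{[2n+M+1]},w_\alpha^{[2n+M]})$, it sends the set of ends below $w_0^{[M+1]}$ to the set of ends below $w_\alpha^{[2n+M+1]}$, i.e.\ $\gamma(B_0^{[M+1]})=B_\alpha^{[2n+M+1]}$ directly. This immediately excludes the pole from $B_0^{[M+1]}$ and puts you in position to apply Lemma~\ref{nclumov}.
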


\begin{proof}
Assume that the latter condition is satisfied for some $n\geq0$.
If we also have $v_{2n+M'+1}(\alpha)=\V_{M'+1}$ for all $M'>M$,
 there is a sub-ray
of $\mathfrak{r}_\alpha$ mapping isomorphically onto the strict 
standard ray, whence $\alpha$ is rational. Therefore, we can 
assume that $M$ is chosen maximal satisfying the hypotheses.
In particular, $v_{2n+M+2}(\alpha)=\V_M$. 
The hypotheses also
gives $v_{2n+M}(\alpha)=\V_M$.
Then, there is a Moebius transformation $\delta$, with
$\mathbf{g}_\delta=\left(\begin{array}{cc}r&-s\\-g&f\end{array}\right)
\in\mathrm{GL}_2(A)=\Gamma$, for which $\mathbf{g}_\delta*\tilde{v}_{2n+M+1}(\alpha)=\tilde{v}_{M+1}
=w_{0}^{[-M-1]}$.
Note that both neighbors,
$\mathbf{g}_\delta*\tilde{v}_{2n+M}(\alpha)$ and 
$\mathbf{g}_\delta*\tilde{v}_{2n+M+2}(\alpha)$, 
correspond to sub-balls of $B_{0}^{[-M+1]}$. Pre-multiplying $\mathbf{g}_\delta$ 
by an element in the stabilizer of 
$\tilde{v}_{M+1}$ if needed, we can assume 
that $\mathbf{g}_\delta*\tilde{v}_{2n+M}(\alpha)=\tilde{v}_M
=w_{0}^{[-M]}$. In particular,
$w=\mathbf{g}_\delta*\tilde{v}_{2n+M+2}(\alpha)$ lies outside of the ray joining
$\tilde{v}_0=w_{0}^{[0]}$ to $\infty$. Let
$\beta=\delta(\alpha)$, so $\alpha=\delta^{-1}(\beta)$, while we have 
$\frac fg=\delta^{-1}(\infty)$ and $\frac sr=\delta^{-1}(0)$.  See
 Figure~\ref{F1}(A).
\begin{figure}
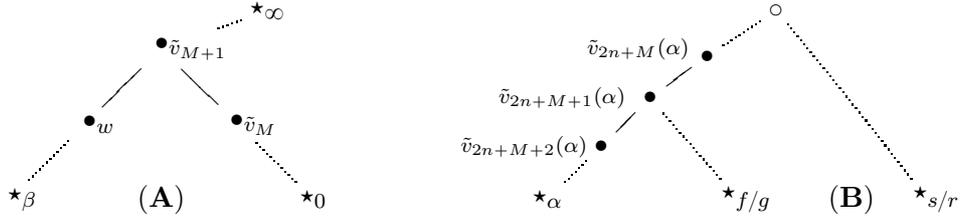

\[
\xygraph{!{<0cm,0cm>;<1cm,0cm>:<0cm,1cm>::}
!{(1,-1) }*+{\mathbf{(A)}}
!{(0,0) }*+{\hphantom{X}\bullet_{w}}="a"
!{(1,1) }*+{\hphantom{XX.}\bullet_{\tilde{v}_{M+1}}}="b"
!{(2,0) }*+{\hphantom{X.}\bullet_{\tilde{v}_{M}}}="c"
!{(3,-1)}*+{\star_{0}}="d"
!{(-1,-1)}*+{\hphantom{X}\star_{\beta}}="e"
!{(2.4,1.5)}*+{\star_{\infty}}="g"
"a"-"b" "c"-"b" "c"-@{.}"d" "a"-@{.}"e" "b"-@{.}"g"}
\qquad\qquad
\xygraph{!{<0cm,0cm>;<1cm,0cm>:<0cm,1cm>::}
!{(3,-1) }*+{\mathbf{(B)}}
!{(-.3,-.3) }*+{\bullet}="a"
!{(-1.3,-.3) }*+{{}_{\tilde{v}_{2n+M+2}(\alpha)}}
!{(.35,.35) }*+{\bullet}="b"
!{(-.8,.35) }*+{{}_{\tilde{v}_{2n+M+1}(\alpha)}}
!{(1.1,.9) }*+{\bullet}="c"
!{(.2,1) }*+{{}_{\tilde{v}_{2n+M}(\alpha)}}
!{(2,1.5) }*+{\circ}="f"
!{(4,-1)}*+{\hphantom{x}\star_{s/r}}="d"
!{(1.5,-1)}*+{\hphantom{x}\star_{f/g}}="g"
!{(-1,-1)}*+{\star_{\alpha}}="e"
"a"-"b" "c"-"b" "f"-@{.}"c" "f"-@{.}"d" "a"-@{.}"e" "b"-@{.}"g"}
\]
\caption{Figures used in the proof of Lemma~\ref{PWA2}}\label{F1}
\end{figure}
We conclude from the figure that $\tilde{v}_{M+1}$ is the incenter 
of the triplet $(\beta,\infty,0)$. 
Then, since $\tilde{v}_{M+1}=\mathbf{g}_\delta*\tilde{v}_{2n+M+1}(\alpha)$, 
the vertex 
$\tilde{v}_{2n+M+1}(\alpha)$ must be the incenter of the triplet
$\left(\alpha,\frac fg,\frac rs\right)$. 
Since $\tilde{v}_{M}=d*\tilde{v}_{2n+M}(\alpha)$ is between $\tilde{v}_{M+1}$ and $0$, then $\tilde{v}_{2n+M}(\alpha)$
is between $\tilde{v}_{2n+M+1}(\alpha)$ and $\frac rs$.
 We conclude that $\frac sr$ is farther away from $\alpha$
than $\frac fg$, as shown in Figure~\ref{F1}(B). 
In particular $\tilde{v}_{2n+M+1}(\alpha)=w_{\alpha,\frac fg}$.

Recall that $\iota$ is the Moebius transformation corresponding to
$\mathbf{g}_\iota=\sbmattrix 0{-1}10$.
Now write $u=\mathbf{g}_\iota*\tilde{v}_{M+1}=w_0^{[M+1]}$, 
which is the incenter of the triplet $(-\beta^{-1},\infty,0)$.
Note that $u$ corresponds to the smallest ball containing $0$ and $-\beta^{-1}$, so its valuation is 
$-\nu_{\infty}(\beta)$. On the other hand, 
$\mathbf{g}_\delta^{-1}\mathbf{g}_\iota \in\mathrm{GL}_2(A)$  
maps the upwards edge of $u$ to the upwards edge of $\tilde{v}_{2n+M+1}(\alpha)$.
In other words, the corresponding Moebius Transformation 
$\eta(z)=\delta^{-1}\circ \iota(z)=\frac{sz-f}{rz-g}$
maps the ball $B_{0,-\beta^{-1}}$, 
corresponding to $u$, as a set,
to the ball $B_{\alpha,\frac fg}$ corresponding to 
$\tilde{v}_{2n+M+1}(\alpha)$, 
so that Lemma~\ref{nclumov} can be applied.
We conclude that
$$2n+M+1=\nu_{\infty}\left(\alpha-\frac fg\right)=
\nu_{\infty}\Big(\eta'(0)\Big)-\nu_{\infty}(\beta)=-\nu_{\infty}(g^2)-\nu_{\infty}(\beta).$$
 Since $\nu_{\infty}(\beta)=-M-1$, this shows the identity,
 $\nu_{\infty}(g)=-n$. Since this gives
 $\nu_{\infty}\left(\alpha-\frac fg\right)>-\nu_{\infty}(g^2)+M$,
 the result follows.

Now we prove the converse. Assume we have an $M_1$-approximation 
$\frac fg$ of $\alpha$ satisfying $\nu(g)=-n_1$. Assume also $M_1\geq N$,
so that, in particular, we have a good approximation, 
whence we can apply Lemma~\ref{l61}. The hypotheses 
tells us 
that, if we set $i=\nu_\infty\left(\alpha-\frac fg\right)$, so that
$\tilde{v}_i(\alpha)=w_{\alpha,\frac fg}$, then $v_i(\alpha)=\V_{M'}$
for some integer $M'>N$. Moreover,
the same lemma shows that the ball 
$B_{\alpha,\frac fg}$ is the image
of a ball of the form $B_{-\beta^{-1},0}$ by a Moebius 
transformation  $\eta(z)=\frac{sz-f}{rz-g}$ with $fr-gs=1$. 
This means that $\tilde{v}_{i-1}(\alpha)=\mathbf{g}_\eta*w$
where  $w=w_0^{[-\nu_{\infty}(\beta)-1]}$, while 
$\mathbf{g}_{\eta^{-1}}$ maps
$w_{f/g}^{[i+1]}$ to $w_0^{[-\nu_{\infty}(\beta)+1]}$ and
$w_\alpha^{[i+1]}$ to the vertex corresponding to a
ball not containing $0$.
This shows that $v_{i-1}(\alpha)=v_{i+1}(\alpha)=\V_{M'-1}$.
Now we can set $\delta=\iota\circ\eta^{-1}$, $M=M'-1$ and 
$2n=i-M-1$, and repeat the computations in the first 
part of the proof. So, as before, we get $2n=-2\nu_{\infty}(g)=2n_1$. In particular $n$ is an integer.
Furthermore, we get $\nu_{\infty}\left(\alpha-\frac fg\right)=-\nu_{\infty}(g^2)+M+1$, so the fact that $\frac fg$ is an
$M_1$-approximation of $\alpha$ shows that $M_1<M+1$.
Now an inductive argument shows that $v_{2n+M+1-j}(\alpha)=\V_{M+1-j}$
for $0\leq j\leq M+1-N$. We conclude that 
$v_{2n+M_1+1}(\alpha)  \stackrel\nearrow= \V_{M_1+1}$.
The result follows.
\end{proof}

Next result is now an immediate consequence of the preceding lemma:

\begin{prop}\label{prop well apprximable}
Let $M\geq N$.
An irrational element $\alpha$ is $M$-approximable if and only if the image in the $\mathbf{S}$-graph $\Gamma \backslash \mathfrak{t}$ of the ray $\mathfrak{r}_\alpha$ going from $\tilde{v}_0$ to $\alpha$
satisfies $v_i(\alpha)=\V_{M+1}$ infinitely often. In particular,
$\alpha$ is well approximable if and only if the corresponding
walk visits  the strict standard ray $\mathfrak{r}_{\infty}^{s}$ infinitely often. \qed
\end{prop}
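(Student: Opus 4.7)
The plan is to deduce this proposition from Lemma~\ref{PWA2} by setting up a dictionary between sequences of $M$-approximations converging to $\alpha$ and passages of the walk of $\alpha$ through the vertex $\V_{M+1}$.

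For the forward direction I would begin with a sequence $\{f_n/g_n\}_n$ of $M$-approximations of $\alpha$ converging to $\alpha$. The defining inequality $|\alpha-f_n/g_n|_\infty<|\pi|_\infty^M/|g_n|_\infty^2$, combined with $|\alpha-f_n/g_n|_\infty\to 0$, forces $|g_n|_\infty\to\infty$; indeed, elements of $A$ of bounded norm form a finite set by Riemann--Roch, so an irrational $\alpha$ cannot be the limit of a sequence taking only finitely many values $f_n/g_n$. Applying Lemma~\ref{PWA2} to each pair $(f_n,g_n)$ then yields an index $k_n=-2\nu_\infty(g_n)+M+1$ with $v_{k_n}(\alpha)\stackrel\nearrow=\V_{M+1}$, and since the $k_n$ are distinct and tend to infinity, this establishes that $v_i(\alpha)=\V_{M+1}$ for infinitely many $i$.

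For the converse, the main point to verify is that each visit of the walk to $\V_{M+1}$ belongs to a sojourn in the cusp that contains an index $j$ with $v_j(\alpha)\stackrel\nearrow=\V_{M+1}$. Since $M\geq N$, the vertex $\V_{M+1}$ lies in the strict standard ray $\mathfrak{r}_{\infty}^s$, which by Lemma~\ref{lemma strict is cuspidal} is contained in a cusp. By Lemma~\ref{L61} its only neighbors in $\Gamma\backslash\mathfrak{t}$ are $\V_M$ and $\V_{M+2}$, and by the remark following Lemma~\ref{L62}, once the walk inside a cusp begins stepping back toward the center it cannot reverse direction again until it exits. Hence every sojourn of the walk in this cusp that reaches $\V_{M+1}$ contains exactly one index $j$ with $v_{j-1}(\alpha)=\V_M$ and $v_j(\alpha)=\V_{M+1}$. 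Since $\alpha$ is irrational, Lemma~\ref{L63} supplies infinitely many such sojourns, hence infinitely many such $j$. Applying Lemma~\ref{PWA2} to each $j=2n+M+1$ converts it into an $M$-approximation $f_n/g_n$ with $\nu_\infty(g_n)=-n\to-\infty$, and the $M$-approximation inequality then forces $f_n/g_n\to\alpha$.

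The final assertion is the case $M=N$ combined with the observation that the walk enters the ray $\mathfrak{r}_{\infty}^s$ exactly through its initial vertex $\V_{N+1}$, so that (using irrationality and Lemma~\ref{L63} to ensure the walk exits every cusp infinitely often) visiting $\mathfrak{r}_{\infty}^s$ infinitely often is equivalent to visiting $\V_{N+1}$ infinitely often. The only subtle step is the converse argument above; ensuring that visits to $\V_{M+1}$ arriving ``from above'' do not contribute new approximations beyond those already counted by visits ``from below'' is exactly what the cusp-irreversibility remark after Lemma~\ref{L62} provides.
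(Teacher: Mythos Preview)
Your argument is correct and follows the paper's approach: the paper simply declares the proposition an immediate consequence of Lemma~\ref{PWA2}, and you have spelled out exactly that deduction. One small citation slip: the cusp-irreversibility remark you invoke is the discussion following Lemma~\ref{lemma strict is cuspidal}, not the remark after Lemma~\ref{L62}; also note that the parity $j\equiv M+1\pmod 2$ needed to write $j=2n+M+1$ is automatic from the bipartition of the $\mathbf{S}$-graph.
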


\paragraph{Proof of Theorem~\ref{T3}.}
Let $\alpha\in\mathcal{O}_\infty$ be an irrational. If 
$\alpha$ has good approximation, 
there is nothing to prove, so we assume this is 
not the case.
Assume that $v_i(\alpha)$ lies outside
the strict standard ray, at distance $r>0$ from the 
vertex  $\V_{N+1}$, the image of $w_0^{[-N-1]}$. Assume also
that neither of the neighboring vertices $v_{i-1}(\alpha)$ 
and $v_{i+1}(\alpha)$ is closer to
$\V_{N+1}$. Then we can choose an irrational $\beta$ satisfying 
$\tilde{v}_i(\beta)=\tilde{v}_i(\alpha)$, and $v_{i+r}(\beta)=\V_{N+1}$. It follows that $\beta$
has a good approximation $\frac fg$ satisfying $\nu_{\infty}\left(\beta-\frac fg\right)>i+r=N+1-2\nu_{\infty}(g)$.
We conclude that $$\nu_{\infty}\left(\alpha-\frac fg\right)\geq i=(N+1-r)-2\nu_{\infty}(g)
> (N-r)-2\nu_{\infty}(g).$$
Note that, for every irrational $\alpha$, there are infinitely many values of $i$ for which the 
value $r-1$ can be bounded by the diameter of the center. The result follows.
\qed

$\phantom{A}$

Recall that we denote by 
$\lbrace \V_i | \,  i \geq N+1 \rbrace $
the vertex set of the strict standard ray 
$\mathfrak{r}_{\infty}^s$. See \S~\ref{subsection quotients} 
for more details. Recall also that $m_{v,v'}$ denotes the neighbor multiplicity. See Def.~\ref{nmul}.

\begin{lem}\label{lemma m < q-1}
If $m_{\V_N,\V_{N+1}}\leq{q^{\deg(P_{\infty})}-1}$ then there exists an element $\alpha \in \mathcal{O}_{\infty}$ that is not well approximable.     
\end{lem}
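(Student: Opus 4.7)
Write $d=\deg(P_\infty)$. By Proposition \ref{prop well apprximable}, an irrational $\alpha\in\mathcal{O}_\infty$ is well approximable if and only if its walk $v_0(\alpha),v_1(\alpha),\ldots$ in $\Gamma\backslash\mathfrak{t}$ visits $\V_{N+1}$ infinitely often. My plan is therefore to construct an irrational $\alpha\in\mathcal{O}_\infty$ whose walk never reaches $\V_{N+1}$; Proposition \ref{prop well apprximable} then concludes.

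The construction is inductive: starting from $\tilde{v}_0(\alpha)=w_0^{[0]}$, so that $v_0(\alpha)=\V_0\neq\V_{N+1}$, I choose at each step $i$ one of the $q^d$ sub-ball neighbors of $\tilde{v}_i(\alpha)$ as $\tilde{v}_{i+1}(\alpha)$, preserving the invariant $v_{i+1}(\alpha)\neq\V_{N+1}$. Feasibility rests on two observations. First, by Lemma \ref{lemma strict is cuspidal} together with Lemma \ref{L61}, the cusp containing $\mathfrak{r}_\infty^s$ is a ray, so the only neighbors of $\V_{N+1}$ in $\Gamma\backslash\mathfrak{t}$ are $\V_N$ and $\V_{N+2}$, and any walk from $\V_0$ that reaches a cusp vertex $\V_{N+k}$ with $k\geq 2$ must first cross $\V_{N+1}$. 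Inductively the invariant then precludes $v_i(\alpha)\in\{\V_{N+2},\V_{N+3},\ldots\}$, so $\V_{N+1}$ is a neighbor of $v_i(\alpha)$ only when $v_i(\alpha)=\V_N$. Second, in that case, the $m_{\V_N,\V_{N+1}}$ neighbors of $\tilde{v}_i(\alpha)$ in $\mathfrak{t}$ projecting to $\V_{N+1}$ lie entirely among the $q^d$ sub-ball neighbors (for $i\geq 1$, the super-ball neighbor $\tilde{v}_{i-1}(\alpha)$ projects to $v_{i-1}(\alpha)\neq\V_{N+1}$ by the invariant; for $i=0$, there is no super-ball constraint in the walk), so the hypothesis $m_{\V_N,\V_{N+1}}\leq q^d-1$ leaves at least $q^d-m_{\V_N,\V_{N+1}}\geq 1$ admissible sub-ball choices.

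The main obstacle is ensuring that the $\alpha$ so constructed is irrational, so that Proposition \ref{prop well apprximable} can be applied (rational elements being trivially well approximable by constant sequences). I expect to handle this via a cardinality argument: at every step with $v_i(\alpha)\neq\V_N$, all $q^d\geq 2$ sub-ball choices are admissible, so the family of admissible $\alpha$ has the cardinality of the continuum, while the rationals in $\mathcal{O}_\infty$ form a countable set, and one may select an irrational representative. Alternatively, one can arrange the walk to revisit some fixed center vertex infinitely often---feasible since at any non-$\V_N$ vertex one enjoys full freedom of choice---and then invoke the contrapositive of Lemma \ref{L63} to conclude $\alpha\notin K$.
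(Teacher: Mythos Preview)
Your proposal is correct and follows essentially the same approach as the paper: construct a walk that avoids $\V_{N+1}$ by invoking the hypothesis whenever the walk sits at $\V_N$, after noting via the cusp structure (equivalently, Lemma~\ref{lemma strict standard ray}) that $\V_{N+1}$ is reachable only through $\V_N$. The paper differs only cosmetically---it prescribes the first $N{+}1$ steps as $\V_0,\ldots,\V_N$ using the involution $\iota$ to identify $w_0^{[t]}$ with a pre-image of $\V_t$---and you are in fact more careful than the paper in explicitly addressing the irrationality of the constructed $\alpha$.
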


\begin{proof}
Recall that $\iota(z)=-1/z$.
Define the element $\alpha$ in a way that $v_t(\alpha)=\V_t$ for 
$t\leq N$. This can be done since $\tilde{v}_{-t}=w_0^{[t]}$ 
is also a pre-image of $\V_{t}$, as 
$\mathbf{g}_\iota*\tilde{v}_{-t}=\tilde{v}_{t}$. Now, by the hypotheses, it is possible to choose 
$v_{N+1}(\alpha)$ different from $\V_{N+1}$. If, for any subsequent
value of the index $i$, we have  $v_i(\alpha)=\V_N$, we choose
$v_{i+1}(\alpha)$ different from $\V_{N+1}$.
Other than that, the
element $v_i(\alpha)$ can be chosen arbitrarily. 
Note that the walk cannot enter $\mathfrak{r}_\infty^s$ if not
through $\V_N$, so that any walk satisfying the prescribed
conditions defines an element $\alpha$
that is not well approximable.
The result follows.
\end{proof}

\paragraph{Proof of Theorem~\ref{T2}.}
For the first statement, we reason as in the preceding lemma, noting
that we can choose $v_{i+1}(\alpha)=\V_N$ whenever
$v_i(\alpha)=\V_{N+1}$ (c.f. Proposition~\ref{prop well apprximable}).

Now we prove the second statement.
By Lemma \ref{lemma m < q-1},
we can assume that $m_{\V_N,\V_{N+1}}$ is either $q^{\deg(P_{\infty})}$
or $q^{\deg(P_{\infty})}+1$. If 
$m_{\V_N,\V_{N+1}}=q^{\deg(P_{\infty})}+1$ there are no additional vertices. In this case $N=0$, since $\mathfrak{r}_\infty$ is a ray. 
Otherwise,
there is a unique neighbor of $\V_N$ apart from $\V_{N+1}$. 
Let us call it $u_1$.  We iterate this process. 
If we ever reach a vertex
$u_k$ such that $m_{u_k,u_{k-1}}\leq{q^{\deg(P_{\infty})}-1}$, we finish the
proof as before, otherwise, we either reached the end of the graph,
or we have a unique neighbor of $u_k$ apart from $u_{k-1}$ that
we can call $u_{k+1}$. We conclude that the \textbf{S}-graph $\Gamma \backslash \mathfrak{t}$ is a ray or a maximal path. By uniformity
we set $u_0=\V_N$.

The image $\V_0$ of the vertex $w_0^{[0]}$ must be
equal to $u_N$ since the standard ray is a ray.
 In particular, it has only one
or two neighbors. However, as noted in Lemma~\ref{L62},
the edges starting at $\V_0$ are
in correspondence with the orbits of
$\mathrm{GL}_2(\mathbb{F})$ on $\mathbb{P}^1(\mathbb{F}[P_\infty])$.
It follows that 
$\deg(P_\infty)=\Big[\mathbb{F}[P_\infty]:\mathbb{F}\Big]\leq 3$.
The cases $\deg(P_\infty)\in\{2,3\}$ can be discarded as neither of
the two orbits, namely $\mathbb{P}^1(\mathbb{F})$ and its complement,
has size $1$. It remains the case $\deg(P_\infty)=1$. 
In particular $N=2g-1$. 
If $g\geq1$, then $m_{u_1,u_0}=m_{\V_{2g-1},\V_{2g}}=1\leq
q^{\deg(P_\infty)}-1$,
according to the explicit formula for
the stabilizer of $\tilde{v}_i$ give in
Lemma \ref{lemma stab in gamma}.
The result follows as before.
We conclude that $g=0$ and $\deg(P_{\infty})=1$, which implies that $A \cong \mathbb{F}[x]$. \qed

\section{Random processes and Haar measure}\label{section haar measure}

In this section we use the standard notations 
in probability 
theory. We assume $(\Omega,\mathfrak{B},P)$ is 
a probability space, 
i.e., a set $\Omega$ with a probability  measure
$P:\mathfrak{B}\rightarrow[0,1]$, where
$\mathfrak{B}\subseteq\wp(\Omega)$ is a $\sigma$-algebra. 
The subsets in $\mathfrak{B}$ are called events. 
When $P(A)\neq0$,
for $A\in\Omega$, we write $P(B|A)=P(A\cap B)/P(A)$ for 
the conditional probability. A random element of a 
set $\mathbf{Y}$ is a function
$Y:\Omega\rightarrow\mathbf{Y}$, and we often use 
$P(Y\in \mathbf{T})$ for
the probability of the event 
$\{\omega\in\Omega|Y(\omega)\in \mathbf{T}\}$,
for any subset $\mathbf{T}\subseteq\mathbf{Y}$. 
A random process is a sequence
of random elements $\{Y_n\}_{n\in\mathbb{N}}$. 
A sequence of the form
$\{Y_n(\omega)\}_{n\in\mathbb{N}}$, for $\omega\in\Omega$ 
is called a trajectory. We say that an event $A$ depends 
on $B$, but not on $C$
if $P(A|B\cap C)=P(A|B)$. Similar conventions apply 
to random elements. A Markov chain is a process 
$\{Y_n\}_{n\in\mathbb{N}}$ where
$Y_{n+1}$ depends on $Y_n$, but no on $Y_{n-1}$,
or any other former element.
In this case elements in the set $\mathbf{Y}$ 
are called states of the Markov chain.

The rays described at the beginning of \S~\ref{section walk} can be seen as 
trajectories of a random process. We let $\mathfrak{R}$ denote a random
ray whose $n$-th vertex is $\widetilde{V}_n$. This implies that the vertex 
$\widetilde{V}_0$ is surely
$w_0^{[0]}$, or in symbols 
$P \left(\widetilde{V}_0=w_0^{[0]} \right)=1$. We assume that 
the conditional probabilities satisfy $P \left(\widetilde{V}_{n+1}=w_{n+1}\big|\widetilde{V}_n=w_n \right)=0$, 
 unless the ball $B_{n+1}$ corresponding to $w_{n+1}$
is a maximal
proper sub-ball of the ball $B_n$ corresponding to $w_n$, 
and in the latter case
$P \left(\widetilde{V}_{n+1}=w_{n+1} \big|\widetilde{V}_n=w_n \right)=\frac1q$. Thus defined, 
$m(A)=P\big(\alpha(\mathfrak{R})\in A\big)$ is the Haar measure on Borel subsets of $\mathcal{O}_\infty$, normalized as to have
$m(\mathcal{O}_\infty)=1$. We also use the directed random edge
$\widetilde{E}_n$, with source $s(\widetilde{E}_n)=\widetilde{V}_{n-1}$ and 
target $t(\widetilde{E}_n)=\widetilde{V}_n$, for $n\geq1$.

Now we let $\Theta$  be an arbitrary subgroup of 
$\mathrm{GL}_2(K)$, and let $\phi:\mathfrak{t}\rightarrow \Theta\backslash\mathfrak{t}$
be the canonical projection. Then $V_n=\phi(\widetilde{V}_n)$ is 
a random vertex in $\Theta\backslash\mathfrak{t}$ and  $E_n=\phi(\widetilde{E}_n)$ is a random
directed edge. A quick computation, using the neighbor multiplicities (see Def.~\ref{nmul}) shows that
$$ P\left( V_{n+1}=w\big|V_n=v,V_{n-1}=w' \right)=\left\{
\begin{array}{rcl}
\frac{m_{v,w}}q     &\textnormal{if}& w\neq w'  \\
\frac{m_{v,w}-1}q     &\textnormal{if}& w=w'  \\
\end{array}
\right..
$$
This process is not a Markov Chain when the states are vertices, which is a consequence of the fact that the process in $\mathfrak{t}$ does not allow 
backtracking.  We can fix this by considering directed paths $E_n$ as states. In fact, if $m'_{v,e}$ denote the number of pre-images of an edge
$e$ among the edges with a given source $v$, we have
$$ P\left(E_{n+1}=e \big|E_n=e'\right)=
\left\{
\begin{array}{rcl}
\frac{m'_{t(e'),e}}q   &\textnormal{if}& e\neq r(e')  \\
\frac{m'_{t(e'),e}-1}q     &\textnormal{if}& e=r(e')  \\
\end{array}
\right.,
$$
where $r(e)$ denotes the reverse edge of $e$. As the 
latter probability does not depends on $n$, this process is a 
time-homogeneous Markov chain (THMC). We refer to it as the standard 
THMC. Note that, if $m'_{t(e'),e}>0$, then the corresponding 
probability is also positive, except possibly
in the case $e=r(e')$. This is critical 
in later computations, and we use it throughout 
without further ado.

Recall that a state $e$ in a THMC is called recurrent if, starting from the
state $e$, the probability that the process will eventually return to that state
is $1$. A process returns to a recurrent state infinitely many times with 
probability one. If $e$ is a recurrent state, and if $e'$ is a state that can be
reached from $e$, that is if $P\left(E_{k+s}=e' \big|E_k=e\right)>0$ for some $s>0$,
then $e'$ is also recurrent.

\begin{lem}\label{L51}
If the quotient graph $\Theta\backslash\mathfrak{t}$ is not a cycle, the reverse $r(e)$ of a recurrent state $e$ of the standard THMC is also recurrent.
\end{lem}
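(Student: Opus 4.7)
The plan is to prove the contrapositive: assuming that $r(e)$ is not reachable from $e$ in the standard THMC, I will show that $\Theta\backslash\mathfrak{t}$ must be a cycle. Let $S$ denote the set of states (directed edges) reachable from $e$. Since $e$ is recurrent, every element of $S$ is recurrent and any two elements communicate.

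The key step is to prove, by induction on the shortest walk-length from $e$, that every $f\in S$ satisfies $r(f)\notin S$ together with the structural identities
\[
\{h\in S:t(h)=t(f)\}=\{f\} \qquad\text{and}\qquad m'_{t(f),r(f)}=1.
\]
The base case $f=e$ follows by applying to $X=r(e)$ at $u=t(e)$ the transition-rule consequence: an outgoing edge $X$ at $u$ fails to be reachable precisely when $\{h\in S:t(h)=u\}\subseteq\{r(X)\}$ and $m'_{u,X}=1$. For the inductive step, a state $f$ reached in $n+1$ steps is the successor of some $f'\in S$ with $t(f')=s(f)$. The induction hypothesis at $f'$ forces any $h\in S$ with $t(h)=s(f)$ to equal $f'$; hence $r(f)\in S$ would give $r(f)=f'$, i.e.\ $f=r(f')$, which is incompatible with $f$ being a valid successor of $f'$ given that $m'_{t(f'),r(f')}=1$. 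Applying the same transition-rule consequence at $t(f)$ then yields the structural identities for $f$.

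From this, the map $S\to V_S:=\{t(f):f\in S\}$, $f\mapsto t(f)$, is a bijection, so the $S$-subgraph on $V_S$ has in-degree one and out-degree $\mathrm{val}(u)-1$ at every vertex (the forbidden outgoing edge at $u$ being $r(f_u)$, where $f_u$ is the unique $S$-edge with target $u$). The handshake identity gives $\sum_{u\in V_S}\mathrm{val}(u)=2|V_S|$. A valency-one vertex $u$ would force $m'_{u,g}=q+1$ for its unique outgoing edge $g$, making the transition from $f_u=r(g)$ to $g=r(f_u)$ deterministic and placing $r(f_u)$ into $S$, contradicting the inductive statement. Hence $\mathrm{val}(u)=2$ throughout $V_S$; the $S$-subgraph, with in- and out-degree one at every vertex and strongly connected thanks to the communication of $S$, is a single directed cycle. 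Since each $V_S$-vertex has both of its quotient-neighbors already in $V_S$, connectedness of $\Theta\backslash\mathfrak{t}$ forces $V_S$ to exhaust the vertex set, and $\Theta\backslash\mathfrak{t}$ itself is a cycle.

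The main obstacle will be the bookkeeping for the induction, in particular distinguishing the two multiplicities $m_{v,w}$ and $m'_{v,e}$ and keeping track of which edge's reverse is under discussion at each step; once the inductive statement is correctly established the rest is a short combinatorial argument.
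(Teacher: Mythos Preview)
Your inductive core is correct and the approach is genuinely different from the paper's, but there is a gap at the handshake step: you invoke $\sum_{u\in V_S}\mathrm{val}(u)=2|V_S|$ without establishing that $V_S$ (equivalently $S$) is finite. The lemma is stated for an arbitrary subgroup $\Theta$, so $\Theta\backslash\mathfrak{t}$ may well be infinite, and a recurrent communicating class of a countable-state chain can a priori be infinite. The fix is short and is already implicit in what you proved. In-degree one in the $S$-subgraph means that any $S$-walk terminating at a given edge is determined by its length: each edge has a unique $S$-predecessor. Since $e$ is recurrent there is a return walk $e\to\cdots\to e$ of some length $n$, so the backward orbit $e=h_0,h_1,h_2,\ldots$ (with $h_{i+1}$ the unique $S$-predecessor of $h_i$) satisfies $h_n=e$; and any $f\in S$ lies on some walk ending at $e$ (because $S$ is a communicating class), hence $f=h_i$ for some $i$. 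Thus $|S|\le n$ and your handshake and valency-two conclusions go through. Once $V_S$ is known to be finite, the final line (``connected with all valencies $2$ implies cycle'') is also safe; without finiteness a bi-infinite line would be a counterexample.

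By contrast, the paper works directly with a single minimal return path $e=e_0e_1\cdots e_n=e$: assuming $r(e)$ does not appear in it, a repeated-deletion argument shows that $e_0,\ldots,e_{n-1}$ trace out a simple cycle in $\Theta\backslash\mathfrak{t}$; then any off-cycle edge at a cycle vertex (which exists because the quotient is not that cycle) can be taken, and recurrence forces a re-entry into the cycle in the reverse orientation, reaching $r(e)$. Your structural induction is cleaner than the paper's ad hoc deletion for proving the cycle property, and it yields the extra information $m'_{t(f),r(f)}=1$ for every $f\in S$; the paper's route is shorter because it handles a single finite return path from the outset and never needs to discuss the size of $S$.
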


\begin{proof}
    Since the directed edge $e$ is recurrent, there is a minimal sequence of
    directed edges $e=e_0e_1e_2\cdots e_n=e$, with $n\geq1$, satisfying
    $P\left(E_{t+1}=e_k+1 \big|E_t=e_k\right)>0$. If $r(e)$
    appears in this sequence, there is nothing to prove, so we assume this 
    is not the case. By minimality, we can also assume that $e$ fails to appear,
    except for the initial and final positions. We claim that the edges in the
    sequence $e=e_0e_1e_2\cdots e_{n-1}$ form a cycle.
    Since the \textbf{S}-graph 
    is bipartite, the integer $n$ is a positive even integer, and the claim is immediate for $n=2$. Assume $n\geq4$.
    If the source or target of $e$ is the source of target of some edge 
    $e_i$ with $2\leq i\leq n-2$, part of the sequence can be deleted,
    again contradicting minimality.  Analogously, if there are two different edges 
    $e_i$ and $e_j$ with the same target, we could delete the sub-sequence
    $e_{i+1}\cdots e_j$ unless $e_{j+1}=r(e_i)$, but in this case also
    $e_{i-1}$ and $e_{j+1}$ have the same target, and we could delete an even bigger
    sub-sequence, a process that cannot be indefinitely iterated, since $r(e)$ does not appear. 
    This prove the claim.

    Since $\Theta\backslash\mathfrak{t}$ is not a cycle, there must exists some
    vertex in the cycle $e_0e_1e_2\cdots e_{n-1}$ having an additional edge that
    can be used to leave the cycle. Since $e$ is recurrent, it must be possible
    to return to the cycle, by the same or a different edge, but then it is possible
    to enter the cycle in the opposite direction. The result follows.
\end{proof}

\begin{rem}
The condition that $\Theta\backslash\mathfrak{t}$ is not a cycle 
is essential in the  preceding result. Assume $\Theta$ is the group
generated by the matrix
    $\left(\begin{array}{cc} \pi^n & 0 \\ 0 & 1\end{array}\right)$ and the set
    $\left(\begin{array}{cc} 1 & F \\ 0 & 1\end{array}\right)$, where $F$ is an additive group satisfying $B_0^{[-n]}+f=B_0^{[-n]}$,
    for each $f\in F$,
    and acting transitively on the set of maximal
    sub-balls $B_0^{[-n]}$. 
    Then the quotient $\Theta\backslash\mathfrak{t}$ is a cycle. Furthermore, 
    for every pair $(v,w)$ of neighboring vertices in this quotient we have
    $\{m_{v,w},m_{w,v}\}=\{1,q\}$, with the value of $m_{v,w}$ depending on the 
    orientation
    of the edge with source $v$ and target $w$. It is not hard to show 
    that the standard THMC, for this group,
    has recurrent edges in one direction and transient edges in the opposite direction.
\end{rem}

\begin{ex}
Now let $\Theta$ be the group
    $\left(\begin{array}{cc} 1 & K \\ 0 & 1\end{array}\right)$,
    of all traslations, then the quotient $\Theta\backslash\mathfrak{t}$ is a
    line, and the standard THMC, for this group, is deterministic, as it is 
    start downwards and  upwards turns are not possible, but even a walk starting upwards will eventually turn downwards with probability 1,
    so every directed edge is transient in this case.
\end{ex}

\begin{lem}\label{L52}
Assume $\Theta=\Gamma=\mathrm{GL}_2(A)$ as before. Then, the standard THMC visit the center of the graph $\Gamma\backslash\mathfrak{t}$ 
    infinitely often with probability $1$.
\end{lem}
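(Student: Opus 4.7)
The approach is to establish that any excursion of the standard THMC into a cusp almost surely returns to the center in finite time, and then to apply the strong Markov property to obtain infinitely many returns.

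Fix a cusp $\mathfrak{c}$ with vertex sequence $v_0,v_1,v_2,\ldots$ as in Lemma~\ref{L61}, whose initial vertex $v_0$ lies in the center. Inside this cusp I refine the state space of directed edges into two types: write $(i,+)$ for the edge $v_{i-1}\to v_i$ and $(i,-)$ for the edge $v_{i+1}\to v_i$. Writing $q=q^{\deg(P_\infty)}$, consistently with the transition probabilities for the tree process, and combining the formula for the standard THMC with the identities $m_{v_i,v_{i+1}}=1$ and $m_{v_i,v_{i-1}}=q$ from Lemma~\ref{L61}(b), one obtains the transitions
\begin{align*}
(i,+)\ &\longmapsto\ (i+1,+)\ \text{with probability } \tfrac{1}{q},\qquad (i-1,-)\ \text{with probability } \tfrac{q-1}{q},\\
(i,-)\ &\longmapsto\ (i-1,-)\ \text{with probability } 1\quad (i\ge 2),
\end{align*}
while from $(1,-)$ the walk moves deterministically to $v_0$. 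The essential point is the identity $m_{v_i,v_{i+1}}=1$: after excluding the unique pre-image of $v_{i+1}$ adjacent to $\tilde v_i$, no outward neighbor of $\tilde v_i$ remains in the tree, so any ``inward'' state is followed by a deterministic retreat.

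In particular, once the process enters any state $(j,-)$ it reaches $v_0$ deterministically in exactly $j$ steps, and $v_0$ belongs to the center. Thus, starting from any state inside $\mathfrak{c}$, the only way to never reach the center is to perform outward transitions $(i,+)\to(i+1,+)\to\cdots$ forever, an event of probability $\lim_{k\to\infty}q^{-k}=0$. Consequently, the first hitting time $\tau$ of the center is almost surely finite; since by Theorem~S~\ref{teo serre quot} the graph $\Gamma\backslash\mathfrak{t}$ is the union of a finite center and finitely many cusps of the above form, this conclusion is uniform over all starting edges.

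The proof concludes via the strong Markov property. The initial vertex $\widetilde V_0=w_0^{[0]}$ projects to $\V_0\in\mathfrak{r}_\infty$, which lies outside the strict standard ray $\mathfrak{r}_\infty^s$ and hence in the center. Define successive return times $\tau_0=0<\tau_1<\tau_2<\cdots$ of the walk to the center. The previous paragraph, applied at each $\tau_k$, yields $P(\tau_{k+1}<\infty\mid\tau_k<\infty)=1$, so by induction every $\tau_k$ is almost surely finite, and the center is visited infinitely often with probability one. The main subtlety to verify carefully is the deterministic retreat from the states $(i,-)$, which however is an immediate consequence of the ``thin'' outward structure $m_{v_i,v_{i+1}}=1$ of a cusp.
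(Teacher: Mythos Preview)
Your proof is correct and takes essentially the same approach as the paper: both use the cusp identity $m_{v_i,v_{i+1}}=1$ to see that a single backward step triggers a deterministic slide to the initial vertex $v_0$, while the probability of moving outward forever is $\prod_k q^{-1}=0$. Your version is simply more explicit about the edge-state transitions and the strong Markov property; the one loose inference (``$\V_0$ lies outside $\mathfrak{r}_\infty^s$, hence in the center'') is not literally valid as stated, but it is harmless here since your excursion argument already shows the center is reached almost surely from \emph{any} starting edge.
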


\begin{proof}
 It suffices to prove that every time the process leaves the center, it returns
 to it with probability 1. This is a consequence of the structure of cusps. 
 for a cusp with vertices $v_0,v_1,v_2,\dots$, as in Figure~\ref{F2}(A), we have
 $m_{v_k,v_{k+1}}=1$, while $m_{v_k,v_{k+1}}=q$. This means that, as soon as
 the process moving to the right on the cusp gives a step back,  it can no longer
 resume the rightward movement until it reaches the center again. This happens
 with probability one, whence the result follows.
\end{proof}

 \begin{figure}
\[  
\mathbf{(A)}\unitlength 1mm 
\linethickness{0.4pt}
\ifx\plotpoint\undefined\newsavebox{\plotpoint}\fi 
\begin{picture}(40,15)(0,-6)
\put(-5,0){\line(0,1){10}}\put(-5,10){\line(1,0){10}}\put(-5,0){\line(1,0){10}}\put(5,0){\line(0,1){10}}
\put(5,2.5){\makebox(0,0)[cc]{$\bullet$}}\put(5.2,2.5){\line(1,0){8}}\put(13.7,2.5){\makebox(0,0)[cc]{$\bullet$}}
\put(14.2,2.5){\line(1,0){8}}\put(22.7,2.5){\makebox(0,0)[cc]{$\bullet$}}
\put(23.2,2.5){\line(1,0){8}}\put(31.7,2.5){\makebox(0,0)[cc]{$\bullet$}}
\put(36,2.5){\makebox(0,0)[cc]{$\cdots\cdots$}}
\put(0,5){\makebox(0,0)[cc]{${}_Z$}}
\put(7.4,0){\makebox(0,0)[cc]{${}_{v_0}$}}
\put(13.7,5){\makebox(0,0)[cc]{${}_{v_1}$}}
\put(22.7,0){\makebox(0,0)[cc]{${}_{v_2}$}}
\put(31.7,5){\makebox(0,0)[cc]{${}_{v_3}$}}
\end{picture}
\qquad\qquad
\mathbf{(B)}\unitlength 1mm 
\linethickness{0.4pt}
\ifx\plotpoint\undefined\newsavebox{\plotpoint}\fi 
\begin{picture}(40,15)(0,-6)
\put(0,2.5){\makebox(0,0)[cc]{$\bullet$}}\put(0,2.5){\line(1,0){8}}\put(8.7,2.5){\makebox(0,0)[cc]{$\bullet$}}
\put(22.7,2.5){\makebox(0,0)[cc]{$\bullet$}}
\put(23.2,2.5){\line(1,0){8}}\put(31.7,2.5){\makebox(0,0)[cc]{$\bullet$}}
\put(4,2.5){\makebox(0,0)[cc]{$>$}}
\put(27,2.5){\makebox(0,0)[cc]{$>$}}
\put(16,2.5){\makebox(0,0)[cc]{$\cdots\cdots\cdots$}}
\put(0.2,0){\makebox(0,0)[cc]{${}_{s(e)}$}}
\put(11.2,5){\makebox(0,0)[cc]{${}_{t(e)}$}}
\put(22.2,0){\makebox(0,0)[cc]{${}_{s(e')}$}}
\put(31.7,5){\makebox(0,0)[cc]{${}_{t(e')}$}}
\end{picture}
\]
\caption{Diagrams used in the proofs of Lemmas~\ref{L52} and Lemma~\ref{L53}. The box marked with a ``$Z$'' denotes a connected subgraph.}\label{F2}
\end{figure}
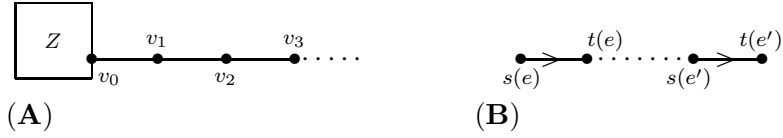

\begin{lem}\label{L53}
Assume $\Theta=\Gamma$. Then, every directed edge of the graph $\Gamma\backslash\mathfrak{t}$ is a recurrent state of the standard THMC.
\end{lem}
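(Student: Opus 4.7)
My plan is to combine Lemma~\ref{L52} with the standard Markov chain dichotomy that recurrence is a class property, so that it suffices to produce \emph{one} recurrent directed edge and then show that the entire edge set forms a single communicating class under the standard THMC.

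First, I would extract a recurrent state from Lemma~\ref{L52}. That lemma asserts that the chain, started from its initial state, almost surely visits the center of $\Gamma\backslash\mathfrak{t}$ infinitely often; reading this on the level of edges, for infinitely many $n$ the state $E_n$ is a directed edge with target in the center. Since the center is finite, this set of edges is finite, so by pigeonhole there must be at least one edge $e^*$ with target in the center satisfying $P(E_n=e^*\textnormal{ infinitely often})>0$. The recurrence/transience dichotomy then forces $e^*$ to be recurrent. Since $\Gamma\backslash\mathfrak{t}$ contains at least one cusp by Theorem~S, it is not a cycle, and Lemma~\ref{L51} yields that $r(e^*)$ is also recurrent.

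Next, I would show that every directed edge $e$ of $\Gamma\backslash\mathfrak{t}$ communicates with $e^*$; since recurrence is a class property, this would conclude the proof. Accessibility from $e$ to (the center, and hence to) $e^*$ would be established by walking inward through the cusp containing $e$ (if $e$ lies in a cusp) until the center is reached: by Lemma~\ref{L61}, each purely inward transition has probability $m_{v_k,v_{k-1}}/q=q^{\deg(P_\infty)}/q>0$ and involves no forbidden reversal. Once in the center, I would use the connectedness of $\Gamma\backslash\mathfrak{t}$ to produce a non-backtracking walk to the source of $e^*$, along which every transition has positive probability ($m'\geq 1$ and $f_{i+1}\neq r(f_i)$). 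Accessibility from $e^*$ to $e$ is symmetric: travel through the center to the initial vertex of the cusp containing $e$, then proceed outward (each outward step in a cusp has probability $1/q>0$ by $m_{v_{k-1},v_k}=1$).

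The main obstacle is that the THMC forbids certain reversals, namely $f\to r(f)$ whenever $m'_{t(f),r(f)}=1$, and by the cusp data in Lemma~\ref{L61} this is exactly the case $f=(v_{k+1}\to v_k)$ with $k\geq 1$, i.e., immediately reversing an inward cusp step back outward. I would sidestep this by choosing walks that never attempt such a reversal: in a cusp one only moves monotonically (inward until hitting the center, or outward until reaching $s(e)$), and in the center one uses a shortest-path walk, which never backtracks. The initial step out of $e^*$ is handled by noting that both $e^*$ and $r(e^*)$ are recurrent, so one can always choose the orientation of the starting edge so that the first transition in the planned walk is a non-reversal (and therefore has positive probability since $m'\geq 1$). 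Once this routing lemma is in place, the class property of recurrence finishes the argument.
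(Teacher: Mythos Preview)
Your proposal is correct and follows essentially the same approach as the paper: extract a recurrent edge from Lemma~\ref{L52} via finiteness of the center, invoke Lemma~\ref{L51} (using that the quotient is not a cycle since it has cusps) to handle orientation, and then use connectedness to reach every other directed edge by a walk with positive transition probabilities. The paper's proof is much terser---it encodes the non-backtracking walk in Figure~\ref{F2}(B) and only argues one-way accessibility from the recurrent edge (which already suffices, since any state reachable from a recurrent state is recurrent)---whereas you establish full two-way communication, a harmless redundancy.
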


\begin{proof}
By Lemma~\ref{L52}, the process visit the center infinitely often with 
probability one. Since the center is a finite graph, it must contain
at least one recurrent edge $e$. Let $e'$ be a different edge. Since 
$\Gamma\backslash\mathfrak{t}$ is connected, there must exists one path $\gamma$
connecting one edge to the other. Since $\Gamma\backslash\mathfrak{t}$ is not
a cycle, as it has cusps, Lemma~\ref{L51} applies and we can assume that the
edges are oriented as in Figure~\ref{F2}(B), but this case is immediate. The result follows.
\end{proof}

We assume that $\Theta=\Gamma$ is all that follows.
\\ 

\paragraph{Proof of Theorem~\ref{T1}}
Fix an integer $M\geq N$. Let $\mathfrak{r}_\infty$ be the standard ray 
 in $\Gamma\backslash\mathfrak{t}$, 
and let $\mathfrak{r}_M$ denote
the smallest sub-ray of $\mathfrak{r}_\infty$ 
containing the vertex $\mathbf{v}_{j}$ for all $j > M$.
Then we have $$m(\Psi_M)=P(\alpha(\mathfrak{R})\in\Psi_M)=
P\big(\sharp\{n| V_n\in\mathfrak{r}_M\}=\infty\big)\geq
P\big(\sharp\{n|E_n=e\}=\infty\big),$$
for any fixed directed edge $e$ inside the ray $\mathfrak{r}_M$.
Since all edges are recurrent by Lemma~\ref{L53}, we conclude that
the probability on the right hand side of the inequality is $1$, and hence so is
$m(\Psi_M)$. For the last statement, we note that every $\alpha\in\Sigma=\bigcap_{M\geq N}\Psi_M$
is the limit of such a sequence $\left\{\frac{f_n}{g_n}\right\}_n$, and the intersection of a numerable family
of full measure sets has full measure.
\qed

\medskip

\paragraph{Proof of Theorem~\ref{T1b}}
Let $\rho>2$ and $M \in \mathbb{Z}$.
Note that, according to our characterization of $M$-approximations in
Lemma~\ref{PWA2}, if we have $\alpha\in\Phi_{M,\rho}$,
then there are sequences $\{n_i\}_i$ and $\{N_i\}_i$ satisfying the following conditions:
\begin{itemize}
    \item[(1)] $\lim_{i \to \infty} n_i= \infty$,
    \item[(2)] $N_i>M+(\rho-2)n_i$ and
    \item[(3)] $v_{n_i+N_i}(\alpha)\stackrel\nearrow= \mathbf{v}_{N_i}$.
\end{itemize}
Note that the last condition makes sense for $N_i>N$, which is 
guaranteed by the first two conditions for large $i$.
Now take a random path $\mathfrak{R}$.
Recall that, for all pairs of positive integers $m$ and $t$,
with $t\geq N$, we can have $V_{n+t}\stackrel\nearrow=\V_t$  only if
$V_{n+N}=\V_N$. We conclude that
$$P\left(V_{n+t}\stackrel\nearrow=\V_t\right)=
P\left(V_{n+t}=\V_t\big|V_{n+N+1}=\V_{N+1}\right)
P\left(V_{n+N+1}=\V_{N+1}\right)$$
$$\leq P\left(V_{n+t}=\V_t\big|V_{n+N+1}=\V_{N+1}\right)=
\left(\frac1{q+1}\right)^{t-N-1}.$$
For any fix $n>0$, let $B_n$ be the event that $V_{n+T}\stackrel\nearrow=\V_T$ 
for some $T>M+(\rho-2)n$.
Then $P(B_n)<\left(\frac1{q+1}\right)^{M+(\rho-2)n-N-1}<\frac1{n^2}$ 
for all sufficiently large integer $n$.
This implies that the series $\sum_nP(B_n)$ is convergent. 
Note that the event $B_\infty=\bigcap_{n=1}^{\infty}\bigcup_{m\geq n}B_m$
can be re-written as follows:
$$B_\infty=\{\omega|V_{n+T}(\omega)=\V_{n+T},\textnormal{ for some }T\geq M+(\rho-2)n,
\textnormal{for infinitely many values of }n\}.$$
Then
$$m(\Phi_{M,\rho})=P(B_{\infty})=
P\left(\bigcap_{n=1}^{\infty}\bigcup_{m\geq n}B_m\right)\leq
\lim_{n\rightarrow\infty}\sum_{m\geq n}P(B_m)=0.$$
\qed

\section{Examples}\label{section examples}

\begin{ex}
When $A=\mathbb{F}[x]$, it is well known that every element 
$\alpha\in K_\infty$ has a continued fraction expansion of the form
\begin{equation}\label{eq7a}
\alpha=f_0+\frac1{f_1+\frac1{f_2+\frac1{f_3+\dots}}},
\end{equation}
where every $f_i$ is a polynomial of positive degree.
Note that $f_0=0$ precisely when $\alpha\in\mathcal{O}_\infty$.
For a random element $\alpha=\alpha(\mathfrak{R})$, as before,
the remaining coefficients $f_i$ can be shown to be independent and 
identically distributed random elements. 

In this case, the quotient
graph $\Gamma\backslash\mathfrak{t}$ is a ray with vertices
$\V_0,\V_1,\V_2,\dots$, without any additional vertex or edge.
The sequence $v_0(\alpha),v_1(\alpha),v_2(\alpha),\dots$ of 
vertices in the walk of an irrational element $\alpha$
goes up in that ray, reaching a local maximum  
$\V_{n_1}$, then heading back to $\V_0$, then going up again
reaching a second local maximum $\V_{n_2}$, and iterating that
behaviour. The indices of these successive local maxima
are the degrees of the polynomials $f_1,f_2,f_3,\dots$, as F. Paulin pointed out in \cite{Paulin}.
Then, the Law of Large numbers can be used to show that
every polynomial appears infinitely many times in the
continued fraction with probability one, thus recovering
the fact that all edges are recurrent in this case.
\end{ex}

\begin{ex}\label{E2}
For a general $A$ as in \S~\ref{subsection quotients}, we can find a set
of representatives  $A'\supseteq A$ of $K_\infty/\pi\mathcal{O}_\infty$, 
where $\pi \in \mathcal{O}_{\infty}$ is a uniformizing parameter of 
$K_{\infty}$. Then every element 
$\alpha\in K_\infty$ has a unique expression as an infinite fraction
\begin{equation}\label{eq7}
\alpha=b_0+\frac1{b_1+\frac1{b_2+\frac1{b_3+\dots}}},
\end{equation}
where $b_i\in A'$, while $\mathrm{deg}(b_i)>0$ for $i>0$. 
If $A\neq A'$, reasoning via random walks, we can easily prove
that the set of elements for which we have a continued fraction expression 
of the form \eqref{eq7}, with all its coefficients in $A$,
is a set of zero Haar measure.
\end{ex}

\begin{ex}\label{E3}
Assume that $g(X)=0$ and $\deg(P_{\infty})=2$.
Then, the $\mathbf{S}$-graph $\Gamma \backslash \mathfrak{t}$ is a maximal path as in Example~\ref{ex S and C graph in degree two}.
See Figure~\ref{Figure degree two}(A).
In particular, we can lift $\Gamma \backslash \mathfrak{t}$ to a subtree $\mathfrak{p}$ of $\mathfrak{t}$.
It follows from \cite[Theorem 3.4]{Mason0} that $\mathfrak{p}$ 
can be chosen as the maximal path $\mathfrak{p}[x,\infty]\subset\mathfrak{t}$ joining the visual limits $x$ and $\infty$.
We write $V(\mathfrak{p})=\lbrace v_i \rbrace_{i=0}^{\infty} \cup \lbrace v^{*}_i \rbrace_{i=1}^{\infty}$, where $v_i= w^{[-i]}_0$ and $v^*_i=w_x^{[i]}$.
The image of $v_i$ is $\V_i$, while we use $\V_i^*$ for the image of
$v^*_i$. 

It follows from Example~\ref{ex S and C graph in degree two} that there is a global matrix $\mathbf{m}$ interchanging both cusps, 
sending $v_i$ to $v_{i-1}^*$. In particular, it suffices to compute the
initial vertex of the cusp containing the strict standard ray (c.f. Lemma~\ref{lemma strict is cuspidal}).
The strict 
standard ray has the initial vertex $\V_{N+1}=\V_1$, which is the image of 
$v_1$. The initial vertex of such cusp is $\V_0$. In fact, the cusp cannot 
extend beyond this point, since
the action of $\mathrm{GL}_2(\mathbb{F})$ on the neighbors of $v_0$ is
the same as in the action of the projective line
$\mathbb{P}^1\Big(\mathbb{F}[P_\infty]\Big)$. It follows that there are
two orbits of size $q+1$ and $q^2-q=q(q-1)$. In particular, both $\V_0$
and $\V^*_1$ are initial vertices of cusps. So the diameter of the center of the $\mathbf{S}$-graph
is one in this case.
We conclude from Theorem~\ref{T3} that any element in $\mathcal{O}_{\infty}$ is $(-2)$-approximable.

Since $A \neq \mathbb{F}[x]$, there exists a non $0$-approximable element in $\mathcal{O}_{\infty}$, according to Theorem~\ref{T2}.
Since the stabilizer of $v_1^*$ acts on the neighbors with two orbits, neither
of which has size 1, the same argument use in the proof of Theorem~\ref{T2}
can be applied to $\V_1^*$, instead of $\V_0$, to prove the existence
of elements that are not $(-1)$-approximable. 
Namely we can choose $\alpha$ in a way that $v_n(\alpha)=\V_1^*$, for some $n$,
choosing $v_{n+1}(\alpha)=\V_2^*$, and from there on, every time that the walk
reaches $\V_1^*$, choosing $\V_2^*$ in the following step.
We conclude that the bound $-2$
is optimal.
\end{ex}

\begin{ex}\label{ex 4}
In the setting of Example~\ref{E3},
\begin{figure}
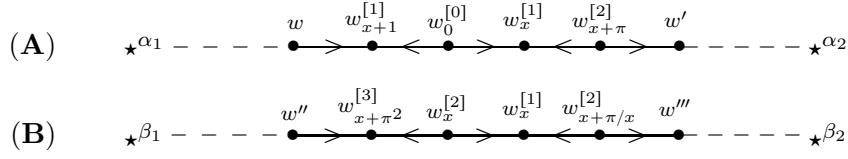

\[
\mathbf{(A)}\qquad
\xygraph{!{<0cm,0cm>;<1cm,0cm>:<0cm,1cm>::}
!{(-2,0) }*+{\star^{\alpha_1}}="vl1"
!{(7,0) }*+{\star^{\alpha_2}}="vl2"
!{(-.2,0) }*+{}="v0"
!{(-0.04,-0) }*+{\bullet}="v1"
!{(1,0) }*+{\bullet}="v2"
!{(2,0) }*+{\bullet}="v3"
!{(3,0) }*+{\bullet}="v4"
!{(4,0) }*+{\bullet}="v5"
!{(5.04,0) }*+{\bullet}="v6"
!{(0,0.30) }*+{{}_{w}}="v1i"
!{(1,0.36) }*+{{}^{w_{x+1}^{[1]}}}="v2i"
!{(2,0.36) }*+{{}_{w_0^{[0]}}}="v3i"
!{(3,0.36) }*+{{}^{w_x^{[1]}}}="v4i"
!{(4,0.36) }*+{{}_{w_{x+\pi}^{[2]}}}="v5i"
!{(5,0.30) }*+{{}^{w'}}="v6i"
!{(5.2,0) }*+{}="v7"
"vl1"-@{--}"v1" "v0"-"v7" "vl2"-@{--}"v6"
!{(0.5,0) }*+{>}!{(1.5,0) }*+{<}
!{(2.5,0) }*+{>}!{(3.5,0) }*+{<}
!{(4.5,0) }*+{>}
}
\]
\[
\mathbf{(B)}\qquad
\xygraph{!{<0cm,0cm>;<1cm,0cm>:<0cm,1cm>::}
!{(-2,0) }*+{\star^{\beta_1}}="vl1"
!{(7,0) }*+{\star^{\beta_2}}="vl2"
!{(-.2,0) }*+{}="v0"
!{(-0.04,-0) }*+{\bullet}="v1"
!{(1,0) }*+{\bullet}="v2"
!{(2,0) }*+{\bullet}="v3"
!{(3,0) }*+{\bullet}="v4"
!{(4,0) }*+{\bullet}="v5"
!{(5.04,0) }*+{\bullet}="v6"
!{(0,0.30) }*+{{}_{w''}}="v1i"
!{(1,0.36) }*+{{}^{w_{x+\pi^2}^{[3]}}}="v2i"
!{(2,0.36) }*+{{}_{w_x^{[2]}}}="v3i"
!{(3,0.36) }*+{{}^{w_x^{[1]}}}="v4i"
!{(4,0.36) }*+{{}_{w_{x+\pi/x}^{[2]}}}="v5i"
!{(5,0.30) }*+{{}^{w'''}}="v6i"
!{(5.2,0) }*+{}="v7"
"vl1"-@{--}"v1" "v0"-"v7" "vl2"-@{--}"v6" 
!{(0.5,0) }*+{>}!{(1.5,0) }*+{<}
!{(2.5,0) }*+{>}!{(3.5,0) }*+{<}
!{(4.5,0) }*+{>}
}
\]
\caption{Finding explicit elements with a prescribed path.}\label{F3}
\end{figure}
 we can explicitly compute elements of $\mathcal{O}_\infty$
that are not well approximable. To fix ideas, set $q=2$. 
Note that the matrix $\mathbf{g}_\iota=\sbmattrix0{-1}10$ is an order-two 
element in the 
stabilizer of $w_0^{[0]}$ that permutes the neighbors $w_x^{[1]}$ and $w_{x+1}^{[1]}$. Similarly,
$\mathbf{g}_\eta=\frac1\pi\sbmattrix{x^2}1{x^2+1}{x^2}$, where $\pi=x^2+x+1$ is the only quadratic 
irreducible polynomial,
 is an order two element in the stabilizer of $w_x^{[1]}$ that permutes the neighbors $w_0^{[0]}$ 
 and $w_{x+\pi}^{[2]}$. 
 This takes us to the situation of Figure~\ref{F3}(A). Note
 that all oriented edges in this picture belong to the same orbit.
 Since the product of two consecutive flips on a line is a shift by two, the visual limits 
 $\alpha_1$ and  $\alpha_2$ of such line, both having a path that alternates between the images 
 of $w_0^{[0]}$ and $w_x^{[1]}$, are the fixed points of the composition 
 $\eta\circ\iota(z)=\frac{x^2+z}{(x^2+1)+x^2z}$. This are given by the equation $\alpha^2+\alpha+1=0$,
i.e., they are the constant of the completion that don't belong to the base field. 
Therefore, these elements are $(-1)$-approximable, but not $0$-approximable.
\end{ex}

\begin{rem}
In the preceding example, $\eta$ is computed from the formula 
$\Gamma_{v^*_1}=  \mathbf{h}^{-1} 
\mathrm{SL}_2(\mathbb{F}) \mathbf{h}$,
 from \cite[2.20]{Mason0}. Here
$\mathbf{h}=\sbmattrix{0}{-\pi}{1}{-x}\in \mathrm{GL}_2(K)$ is a matrix satisfying $\mathbf{h} * v_1^{*}=v_0$.    
\end{rem}

\begin{ex}
In the same setting as in the preceding example, we have from \cite[Pag. 58]{MasonSchweizer2} the following formula:
\begin{equation}\label{eq stab2}
\Gamma_{v^*_2}= \left \lbrace \mathbf{h}_{x}^{-1} \sbmattrix{\alpha}{c \pi^{-2}}{0}{\beta} \mathbf{h}_x \, \bigg| 
\begin{array}{lr}
      \alpha, \beta \in \mathbb{F}^{*}, \, \, c \in \mathbb{F}[x] \\
        \deg(c-(\alpha^{-1}-\alpha)x^3) \leq 2 
    \end{array}
\right \rbrace,
\end{equation}
where $\mathbf{h}_x:=\sbmattrix{0}{-1}{1}{-x}$. We choose $\alpha=\beta=c=1$, and write $\mathbf{g}_\xi=\mathbf{h}_{x}^{-1} \sbmattrix1{ \pi^{-2}}01 \mathbf{h}_x$. Then $\xi(z)=\frac{(x\pi^{-2}+1)z+x^2\pi^{-2}}{\pi^{-2}z+x\pi^{-2}+1}$. Using the fact that $\eta(x)=x+\pi/x$ and
$\xi(\infty)=x+\pi^2$, we obtain the path depicted in Figure~\ref{F3}(B).
The endpoints of this path are the invariant points of the Moebius transformation
$$\eta\circ\xi(z)=\frac{[x^2(x+\pi^2)+1]z+[x^4+x+\pi^2]}{[(x+\pi^2)(x^2+1)+x^2]z+[x^2(x^2+1)+x^2(x+\pi^2)]}.$$ It follows that $\beta_1,\beta_2$ can be found as the roots (in $z$) 
of the quadratic equation
$$[(x+\pi^2)(x^2+1)+x^2]z^2+\pi^2z+[x^4+x+\pi^2]=0.$$
The path of either elements go back and forth between $\V_1^*$ and
$\V_2^*$ indefinitely, so these elements are not $(-1)$-approximable. 
\end{ex}

\begin{ex}\label{E4}
Assume that $g(X)=0$ and $\deg(P_{\infty})=3$.
Then, it can be seen in \cite[Ch. II, \S 2.4.2 (b)]{SerreTrees} 
that the $\mathbf{S}$-graph $\Gamma \backslash \mathfrak{t}$ is the union 
of $2$ maximal path with a common ray as intersection.
In particular, we can lift $\Gamma \backslash \mathfrak{t}$ to a subtree
$\mathfrak{q}$ of $\mathfrak{t}$.
It follows from \cite[Theorem 3.4]{Mason0} that the vertex set of 
$\mathfrak{q}$ is $V(\mathfrak{q})=\lbrace v_i 
\rbrace_{i=0}^{\infty} \cup \lbrace v^{*}_i \rbrace_{i=1}^{\infty} 
\cup \lbrace v^{**}_i \rbrace_{i=1}^{\infty}$, where $v_i= w^{[-i]}_0$, $
v^*_i=w_x^{[i]}$ and $v^{**}_i=w_{x^2}^{[i]}$.
The image of $v_i$ is $\V_i$, while we use $\V_i^*$ (resp. $\V_i^{**}$) for 
the image of $v^*_i$ (resp. $v_i^{**}$) in the $\mathbf{S}$-graph.
See Figure~\ref{Figure degree 3}.

\begin{figure}[ht]
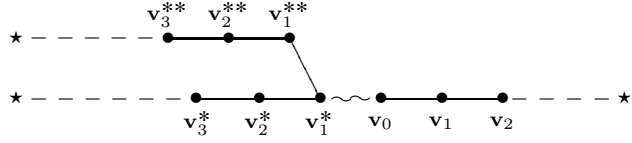

\[
\xygraph{!{<0cm,0cm>;<0.8cm,0cm>:<0cm,0.8cm>::}
!{(-3,0) }*+{\star}="vl1"
!{(7,0) }*+{\star}="vl2"
!{(-3,1) }*+{\star}="vl3"
!{(-0.04,-0) }*+{\bullet}="v1"
!{(1,0) }*+{\bullet}="v2"
!{(2,0) }*+{\bullet}="v3"
!{(3,0) }*+{\bullet}="v4"
!{(4,0) }*+{\bullet}="v5"
!{(5,0) }*+{\bullet}="v55"
!{(-0.5,1) }*+{\bullet}="v6"
!{(0.5,1) }*+{\bullet}="v7"
!{(1.5,1) }*+{\bullet}="v8"
!{(-.2,0) }*+{}="v0"
!{(5.2,0) }*+{}="v5f"
!{(2.1,-0.2) }*+{}="v3f"
!{(2.1,0) }*+{}="v3ff"
!{(2.9,0) }*+{}="v0ff"
!{(1.9,0) }*+{}="v3fff"
!{(3.1,0) }*+{}="v0fff"
!{(1.4,1.2) }*+{}="v8f"
!{(1.6,1) }*+{}="v8ff"
!{(-0.6,1) }*+{}="v6f"
!{(0,-0.4) }*+{{}_{\V_3^*}}="v1i"
!{(1,-0.4) }*+{{}_{\V_2^*}}="v2i"
!{(2,-0.4) }*+{{}_{\V_1^*}}="v3i"
!{(3,-0.4) }*+{{}_{\V_0}}="v4i"
!{(4,-0.4) }*+{{}_{\V_1}}="v5i"
!{(5,-0.4) }*+{{}_{\V_2}}="v55i"
!{(-0.5,1.4) }*+{{}_{\V_3^{**}}}="v6i"
!{(0.5,1.4) }*+{{}_{\V_2^{**}}}="v7i"
!{(1.5,1.4) }*+{{}_{\V_1^{**}}}="v8i"
!{(6.2,0) }*+{}="v7"
"vl1"-@{--}"v1" "v0"-@{-}"v3ff" "v0ff"-@{-}"v5f" "v0fff"-@{~}"v3fff" 
"vl2"-@{--}"v5" 
"v3f"-@{-}"v8f" "v8ff"-"v6f" "v6"-@{--}"vl3"
}
\]
\caption{
The $\mathbf{S}$-graph $\Gamma \backslash \mathfrak{t}$ when $\mathbb{P}^1_{\mathbb{F}}$ and $\deg(P_{\infty})=3$.
The edge represented by a wiggly line equals the central graph of $\Gamma \backslash \mathfrak{t}$.
}\label{Figure degree 3}
\end{figure}

As stated in Lemma~\ref{lemma strict is cuspidal}, the vertices 
in $\lbrace \V_i \rbrace_{i=1}^{\infty}$ are contained in the strict
standard ray, which is a cusp.
Moreover, it follows from the method exposed in 
\cite[Theorem 1.5]{ArenasQuotient}, or alternatively from 
\cite[Lemma 2.17]{Mason0}, that
the vertices in $\lbrace v^{*}_i \rbrace_{i=2}^{\infty}$ (resp. in $ \lbrace v^{**}_i \rbrace_{i=1}^{\infty}$) are contained in other cusps.
Thus, the diameter of the central graph of $\Gamma \backslash \mathfrak{t}$ is $1$.
Therefore, Theorem~\ref{T3} implies that any element in $\mathcal{O}_{\infty}$ is $(-2)$-approximable.
\end{ex}

\begin{ex}\label{E5}
Now, assume that $g(X)=1$. 
In other words, assume that $X$ is a projective elliptic curve over $\mathbb{F}$ defined by a Weierstrass equation $F(x,y)=0$, where 
$$F(x,y)=y^2+ a_1 xy + a_3 y - x^3- a_2 x^2 - a^4 x - a_6, \quad a_i \in \mathbb{F}.$$
Let us also assume that $\deg(P_{\infty})=1$. For instance, we can take $P_{\infty}$ as the only point of $X$ in the line at infinity of $\mathbb{P}^2_{\mathbb{F}}$.
Then, we have that $N=1$.
It follows from \cite[Theorem 3 \& Theorem 5]{Takahashi} that the diameter of the $\mathbf{S}$-graph $\Gamma \backslash \mathfrak{t}$ is:
\begin{itemize}
    \item[(1)] $d=4$, when $F(x,y)$ does not have rational solutions, .i.e., $X(\mathbb{F}) = \emptyset$,
    \item[(2)] $d=6$, in any other case. 
\end{itemize}
Thus, any element in $\mathcal{O}_{\infty}$ is $(-4)$-approximable whenever $X(\mathbb{F}) = \emptyset$, while any element in $\mathcal{O}_{\infty}$ is $(-6)$-approximable whenever $X(\mathbb{F}) \neq \emptyset$.
In both cases, for each $M \geq 1$, Theorem~\ref{T2}
implies the existence of elements in $\mathcal{O}_{\infty}$ which are not $M$-approximable.

\begin{figure}[ht]
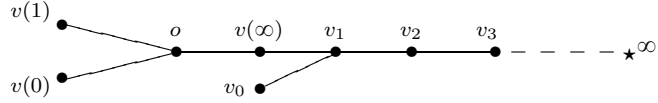

\[
\xygraph{!{<0cm,0cm>;<1cm,0cm>:<0cm,1cm>::}
!{(8,0) }*+{\star^{\infty}}="vl2"
!{(-.2,0) }*+{}="v0"
!{(0,0.5) }*+{{}^{v(1)}}="v1"
!{(0,-0.5) }*+{{}^{v(0)}}="v2"
!{(0.35,0.38) }*+{}="v1v"
!{(0.35,-0.38) }*+{}="v2v"
!{(1.95,0) }*+{}="v0v"
!{(1.8,0) }*+{}="v0"
!{(3,0.0) }*+{\bullet}="v4i"
!{(4,0) }*+{\bullet}="v5i"
!{(3,-0.5) }*+{\bullet}="v5bi"
!{(5,0) }*+{\bullet}="v6i"
!{(6.1,0) }*+{\bullet}="v6i"
!{(1.9,0) }*+{\bullet}="v0i"
!{(6.2,0) }*+{}="v7i"
!{(0.4,0.35) }*+{\bullet}="v1i"
!{(0.4,-0.35) }*+{\bullet}="v2i"
!{(1.9,0.2) }*+{{}^{o}}="v3i"
!{(3,0.25) }*+{{}^{v(\infty)}}="v4"
!{(4,0.2) }*+{{}^{v_1}}="v5"
!{(2.68,-0.5) }*+{{}_{v_0}}="v5b"
!{(4.15,0.05) }*+{}="v5v"
!{(2.96,-0.5) }*+{}="v5bv"
!{(5,0.2) }*+{{}^{v_2}}="v6"
!{(6,0.2) }*+{{}^{v_3}}="v6"
!{(6.2,0) }*+{}="v7"
"v0"-"v7" "vl2"-@{--}"v7i" "v0v"-"v1v" "v0v"-"v2v" "v5v"-"v5bv" 
}
\]
\caption{A lift in $\mathfrak{t}$ of the $\mathbf{S}$-graph when $\mathcal{C}$ is an elliptic curve without rational solutions and $\deg(P_{\infty})=1$.
In the image we assume $q=2$.}\label{F4}
\end{figure}

Now, set $\mathbb{F}=\mathbb{F}_2$ and $F(x,y)=y^2+y+x^3+x+1$, so that $X(\mathbb{F})=\emptyset$.
The $\mathbf{S}$-graph $\Gamma \backslash \mathfrak{t}$ is a tree with only one cusp, according to \cite{Takahashi} (or alternatively \cite{MasonSchweizer1} or \cite[Ch. II, Ex. 2.2.4]{SerreTrees}).
We lift $\Gamma \backslash \mathfrak{t}$ to a subtree $\mathfrak{t}_0$ of $\mathfrak{t}$ such that $\mathrm{V}(\mathfrak{t}_0)=\lbrace v_i \rbrace_{i=0}^{\infty} \cup \lbrace o, v(0), v(1), v(\infty) \rbrace$, where $v_i= w_0^{[-i]}$, $v(\infty)=w_{1/\pi}^{[0]}$, $o=w_{1/\pi}^{[1]}$, $v(0)=w_{1/\pi}^{[2]}$ and $v(1)=w_{1/\pi+\pi}^{[1]}$. See Figure~\ref{F4}.
It follows from \cite[Prop. 9]{Takahashi} that for $n \geq 1$:
\begin{equation}\label{eq stab g=11}
 \Gamma_{v_n}=  \sbmattrix{1}{A[n]}{0}{1}.
\end{equation}
while
\begin{equation}\label{eq stab g=12}
\Gamma_{o}=\lbrace \mathrm{id} \rbrace, \quad \Gamma_{v_0}= \mathrm{GL}_2(\mathbb{F}), \quad \Gamma_{v(\infty)}=  \sbmattrix{1}{\mathbb{F}}{0}{1}. 
\end{equation}
The image of $v_i$ in the $\mathbf{S}$-graph is $\V_i$.
We respectively denote by $\mathbf{o}$, $\mathbf{v}(0)$, $\mathbf{v}(1)$ and $\mathbf{v}(\infty)$ the images of $o$, $v(0)$, $v(1)$ and $v(\infty)$ in the $\mathbf{S}$-graph. 
Lemma~\ref{lemma standard ray} imply that, for each 
$i \geq 2$, we have $m_{\V_i, \V_{i+1}} =1$ and 
$m_{\V_i, \V_{i-1}} =2.$
The first equality in Eq.~\eqref{eq stab g=12} shows that 
$m_{\mathbf{o}, \mathbf{w}}=1$, for each $\mathbf{w} \in 
\lbrace \mathbf{v}(0), \mathbf{v}(1) , \mathbf{v}(\infty) 
\rbrace$.
Since $\V_0$ has a unique neighbor, we conclude that
$m_{\V_0, \V_{1}} =3$.
Since the matrix $\sbmattrix{1}{1}{0}{1}$ fixes
$v_1$, while it transposes the remaining two
neighbors of $v(\infty)$, the third equality in 
Eq. \eqref{eq stab g=12} implies that 
$m_{\mathbf{v}(\infty), \V_{1}} =1$ and 
$m_{\mathbf{v}(\infty), \mathbf{o}} =2$.
Finally, since $\mathbf{v}(0)$ and $\mathbf{v}(1)$ 
are terminal, we have that $m_{\mathbf{v}(0), 
\mathbf{o}}=m_{\mathbf{v}(1), \mathbf{o}}=3$.
Note that we can have walks going from the image 
$\V(0)$ of $v(0)$
to the image $\V(1)$ of $v(1)$ and back, 
passing only through 
the image $\mathbf{o}$ of $o$. This proves the existence of irrationals
that are $(-2)$-approximable, but not $(-1)$-approximable. By inspection,
the bound $-2$ is actually sharp.
\end{ex}

\section*{Acknowledgements}
The second author was partially supported by Anid-Conicyt,
through the Postdoctoral fellowship No $74220027$.


\bibliographystyle{amsalpha}
\bibliography{refs.bib}

\end{document}